\newcommand{\N}{\mathbb{N}}
\newcommand{\R}{\mathbb{R}}
\newcommand{\eps}{\varepsilon}
\def\XXint#1#2#3{{\setbox0=\hbox{$#1{#2#3}{\int}$ }
\vcenter{\hbox{$#2#3$ }}\kern-.6\wd0}}
\newtheorem{proposition}{Proposition}[section]
\newtheorem{theorem}[proposition]{Theorem}
\newtheorem{lemma}[proposition]{Lemma}
\theoremstyle{definition}
\newtheorem{definition}[proposition]{Definition}
\newtheorem{remark}[proposition]{Remark}
\numberwithin{equation}{section}
\newcommand{\beq}{\begin{equation}}
\newcommand{\eeq}{\end{equation}}
\newcommand{\ben}{\begin{enumerate}}
\newcommand{\een}{\end{enumerate}}
\newcommand{\bit}{\begin{itemize}}
\newcommand{\eit}{\end{itemize}}
\title{Normalized solutions for the Sobolev critical\\ Schr\"{o}dinger equation with trapping potential}
\author{Junwei Yu}
\date{\today}
\begin{document}
\maketitle

\begin{abstract}
We study the existence and multiplicity of positive normalized solutions with prescribed $L^{2}$-norm
for the Sobolev critical Schr\"odinger equation
\begin{equation*}
\begin{cases}
-\Delta U +V(x)U = \lambda U + |U|^{2^{*}-2}U  & \quad  \text{  in  } \R^N,\smallskip\\
\displaystyle \int_{\mathbb{R}^{N}} U^2\,dx = \rho^{2},
\end{cases}
\end{equation*}
where $N \ge 3$, $V\ge 0$ is a trapping potential, $\lambda \in \R$ and $2^*=\frac{2N}{N-2}$.
Our first result is that the existence of local minimum solutions for $\rho \in (0, \rho^*)$, for some suitable $\rho^* > 0$, under appropriate assumptions on the potential. These solutions correspond to ground states. Our second result concerns the existence of mountain pass solutions, under the same assumptions.

\end{abstract}
\noindent
{\footnotesize \textbf{AMS-Subject Classification}}.
{\footnotesize 35J20, 35B33, 35Q55, 35Q89, 35J61.}\\
{\footnotesize \textbf{Keywords}}.
{\footnotesize Energy critical Schr\"odinger equations, constrained critical points, solitary waves, normalized solutions, trapping potential.}

\section{Introduction}

In this paper, we study the existence and multiplicity of standing waves of prescribed $L^{2}$-norm for the evolutive Sobolev critical Schr\"odinger equation
\begin{equation}\label{eq:evolutive}
i \Phi_{t}+\Delta \Phi-V(x)\Phi  + |\Phi|^{2^{*}-2}\Phi=0, \quad (t,x)\in \R\times \R^N,
\end{equation}
where $\Phi = \Phi(t,x): \R \times \R^N \ \to \mathbb{C}$, denotes the wave function, $V$ is a (real valued) potential, $N \ge 3$ and $2^{*}=\frac{2N}{N-2}$ is the Sobolev critical
exponent.

We recall that standing waves to \eqref{eq:evolutive} are solutions of the form
\[
\Phi(t,x) = e^{-i \lambda t} U(x), \quad (t,x)\in \R\times \R^N,
\]
where $U$ is a real function and $\lambda \in \R$. This ansatz leads to the elliptic problem
\[
-\Delta U +V(x)U = \lambda U + |U|^{2^{*}-2}U \qquad U  \text{  in  } H^1(\R^N), \quad \lambda \in \R.
\]
Such equations appear in various physical phenomena, including nonlinear optics and the theory of Bose-Einstein condensation. 
To study the existence and qualitative properties of solutions, one of the most powerful tools is the variational method, which allows us to interpret the solutions as critical points of suitable functionals associated with the equation.

There are mainly two variational approaches that have been developed in this framework. 
The first one considers the parameter $\lambda$ fixed and aims to find the critical points of the corresponding action functional $\mathcal{A}_\lambda$ : $H^{1}(\R^N) \rightarrow \mathbb{R}$, defined by
\[
\mathcal{A}_\lambda(U) 
= \frac12 \int_{\R^N} \big(|\nabla U|^2 + (V-\lambda)|U|^2 \big) - \frac1p \int_{\R^N} |U|^p.
\]
as in \cite{BerestyckiLions1983}. The second approach, following \cite{CazenaveLions1982}, treats $\lambda$ as an unknown parameter and focus instead on the energy functional $\mathcal{E}$ : $\mathcal{Q}_{\rho} \rightarrow \mathbb{R}$, defined by
\[
\mathcal{E}(U)=\frac{1}{2}\int_{\mathbb{R}^{N}}|\nabla U|^{2}dx + \frac{1}{2}\int_{\mathbb{R}^{N}}V(x) U^{2}dx-\frac{1}{2^{*}}\int_{\mathbb{R}^{N}}|U|^{2^{*}}dx
\]
constrained to the $L^{2}$-sphere
\begin{equation*}
\mathcal{Q}_{\rho}=\left\{U\in H^{1}(\mathbb{R}^{N}): \int_{\mathbb{R}^{N}} U^{2}=\rho^{2}\right\},
\end{equation*}
where $\lambda$ appears as a Lagrange multiplier. Under the assumptions imposed on the potential $V(x)$ and the mass $\rho$, these two approaches lead to distinct existence and multiplicity results, each requiring delicate analytical techniques. 
In this paper, we adopt the second method, as provides information on the orbital stability of the associated standing waves: local minimizers usually give rise to orbitally stable sets, whereas saddle points should correspond to unstable solitons.

Let $V=0$, we consider the free pure power energy $\mathcal{E}_{0}$ : $\mathcal{Q}_{\rho} \rightarrow \mathbb{R}$, defined by
\[
\mathcal{E}_{0}(U)=\frac{1}{2}\int_{\mathbb{R}^{N}}|\nabla U|^{2}dx -\frac{1}{2^{*}}\int_{\mathbb{R}^{N}}|U|^{2^{*}}dx,
\]
constrained to $\mathcal{Q}_{\rho}$. Define the scaling 
\[
\R^+\ni h \mapsto U_h(x) := h^{N/2}U(hx) \in \mathcal{Q}_\rho,
\]
one obtains
\[
\mathcal{E}_{0}(U_{h}) = \frac{h^{2}}{2}\int_{\R^{N}} |\nabla U|^2\, dx-\frac{h^{2^{*}}}{p}\int_{\R^{N}} |U|^{2^{*}}\, dx,
\]
which yields $\mathcal{E}_{0}$ has a mountain pass geometry on $\mathcal{Q}_\rho$. In general, the mountain pass geometry may give rise to two types of critical points. More precisely, such geometry provides Palais-Smale sequences at two distinct energy levels: a mountain pass level and a local minimum level. Actually, local minimizing sequences are not necessarily Palais-Smale sequences, but one can apply Ekeland’s variational principle to construct a new Palais-Smale sequence at the local minimum level.

For $\mathcal{E}_{0}$ on $\mathcal{Q}_{\rho}$, the sequence associated to the local minimum level always converges weakly in $H^{1}(\R^{N})$ to $0$, and hence does not yield a normalized solution. On the other hand, the mountain pass solution exists only in dimension $N \ge 5$, since the scaled Aubin-Talenti functions are not in $L^{2}(\R^N)$ for $N=3,4$.

It is natural to expect that the mountain pass geometry can be extended to perturbations of $\mathcal{E}_{0}$, such as those involving combined nonlinearities, different domains, non-constant potentials or more general nonlinearities. Consequently, under these perturbations, two natural questions arise: does a local minimizer exist? And does a mountain pass solution exist, even in dimensions $N=3,4$?

Indeed, in some cases in the literature it has been proved that the answers to the above questions are positive. In the Sobolev critical case, the perturbation which is given by combined nonlinearities, was first introduced by Soave \cite{MR4096725}. Soave proved the existence of a local minimizer and left the existence of a mountain-pass solution as an open question. Later, the problem was solved by Jeanjean and Le \cite{MR4476243} for $N \geq 4$ and by Wei and Wu \cite{MR4433054} for $N=3$. For related results in the Sobolev critical case, we refer the reader to studies on bounded domains \cite{chang2025positivenormalizedsolutionsschrodinger,MR4847285,song2024positivenormalizedsolutionsstarshaped}, on systems \cite{MR3918087}, and on potentials \cite{verzini2025normalizedsolutionsnonlinearschrodinger}.

Notice that the Sobolev subcritical case has been investigated much more. Since the seminal work of Jeanjean \cite{MR1430506}, the analysis of normalized solutions to Schr\"odinger equations has attracted considerable attention in the last ten years. While below we provide a more detailed discussion of the literature concerning different types of perturbations, from this general perspective we mention here only a few recent works, and refer the interested reader to the references therein for further contributions: e.g. equations on bounded domains(step well
trapping potential) \cite{ntvAnPDE,MR3689156}; about combined nonlinearities \cite{MR4107073}; equations with potentials \cite{MR4304693,MR4443784}; equations on metric graphs \cite{MR4404069,MR4132757,MR4601303,MR4371084,MR4241295}; equations on product space \cite{MR3219500,pierotti2025energylocalminimizersnonlinear}.

In the present paper, we deal with the perturbation: $V$ is a trapping potential. Therefore, we consider the problem
\begin{equation}\label{eq:main}
\begin{cases}
-\Delta U +V(x)U = \lambda U + |U|^{2^{*}-2}U  & \text{in }\mathbb{R}^{N},\smallskip\\
\displaystyle \int_{\mathbb{R}^{N}} U^2\,dx = \rho^{2},
\end{cases}
\end{equation}
in the unknown $(U,\lambda)\in H^1(\R^N)\times\R$, where $\rho>0$, $N\geq3$, $2^{*}:=\frac{2N}{N-2}$ is 
the Sobolev critical exponent, and $V:\mathbb{R}^n \to \mathbb{R}$ is a non-negative locally Lipschitz trapping potential in the sense that for each $x_0 \in \mathbb{R}^n$, there exists a neighborhood $B(x_0)$ of $x_0$ and a constant $L>0$ (depending on $B(x_0)$) such that
\begin{equation}\label{potential:1}
|V(x)-V(y)| \le L |x-y|, \quad \forall x,y \in B(x_0),
\end{equation}
and
\begin{equation}\label{potential:2}
V \geq 0 \quad \mbox{and } \ \lim_{|x| \to \infty}V(x)=+\infty.
\end{equation}

Throughout the whole paper, we shall always assume that $V$ satisfies \eqref{potential:1}, \eqref{potential:2}, and in addition, there exists a constant $C>0$ such that
\begin{equation}\label{potential:special}
- 2 V(x) \leq \nabla V(x) \cdot x \leq C V(x).
\end{equation}

In this paper, we will find solutions of equation \eqref{eq:main}, thus, the associated functional should be well defined. We introduce the energy space
\begin{equation}\label{space:main}
\mathcal{H}:=\left\{U\in H^{1}(\mathbb{R}^{N}): \int_{\R^{N}}\left(|\nabla U|^{2} +(V+1) U^{2} \right) dx < + \infty\right\},
\end{equation}
endowed with the norm
\begin{equation}\label{space:norm}
\|U\|_{\mathcal{H}}^{2}= \int_{\R^{N}} \left(|\nabla U|^{2} +(V+1) U^{2} \right) dx.
\end{equation}
Then, solutions of \eqref{eq:main} correspond to critical points of the energy functional 
$E: \mathcal{H}\rightarrow \mathbb{R}$, defined by
\begin{equation*}\label{func:main}
E(U)=\frac{1}{2}\int_{\mathbb{R}^{N}}|\nabla U|^{2}dx + \frac{1}{2}\int_{\mathbb{R}^{N}}V(x) U^{2}dx-\frac{1}{2^{*}}\int_{\mathbb{R}^{N}}|U|^{2^{*}}dx
\end{equation*}
constrained to the $L^{2}-$sphere
\begin{equation*}\label{manifold:rho}
\mathcal{M}_{\rho}=\left\{U\in \mathcal{H}: \int_{\mathbb{R}^{N}} U^{2}=\rho^{2}\right\},
\end{equation*}
where the number $\lambda$ playing the role of a Lagrange multiplier. 

To state our results we need the following definition.
\begin{definition}\label{def:GS}
We say that $U_{0}$ is a  normalized ground state of \eqref{eq:main} if $U_{0}$ is a critical point of $E$ constrained to $\mathcal{M}_\rho$, i.e. it solves \eqref{eq:main}, and
\begin{equation*}
E(U_{0})= \inf\{E(U): U \in \mathcal{M}_\rho, \ \nabla_{\mathcal{M}_\rho}E(U)=0\}.
\end{equation*}
\end{definition}

\begin{theorem}\label{thm:GS}
Let $N \geq 3$ and assume that \eqref{potential:1}, \eqref{potential:2} and \eqref{potential:special} hold. Then there exists $\rho^*>0$ such that, for every $0<\rho<\rho^*$,  \eqref{eq:main} has a positive solution, which corresponds to a local minimizer of $E$ on $\mathcal{M}_\rho$, and which is, in addition, a ground state.
\end{theorem}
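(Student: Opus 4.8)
The plan is to find the local minimizer via a constrained minimization over a suitable sublevel region of $\Mcal_\rho$, exploiting the trapping potential to recover compactness and the smallness of $\rho$ to rule out the escape to the critical Sobolev profile. First I would show, using the Gagliardo--Nirenberg--Sobolev inequality $\|U\|_{2^*}^{2^*} \le S^{-2^*/2}\|\nabla U\|_2^{2^*}$, that on $\Mcal_\rho$ one has a lower bound of the form $E(U) \ge \tfrac12 A - \tfrac{1}{2^*}S^{-2^*/2}A^{2^*/2}$ where $A = \|\nabla U\|_2^2 + \into V U^2$; since $2^*/2 > 1$, the function $g(A) = \tfrac12 A - cA^{2^*/2}$ has a strict local max at some $A_0 > 0$ and is positive on $(0,A_0)$, negative and decreasing afterwards. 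Define $B_R = \{U \in \Mcal_\rho : \|\nabla U\|_2^2 + \into V U^2 < R\}$ for $R = A_0$ (or slightly less), and set $m(\rho) = \inf_{B_R} E$. For $\rho$ small enough I would exhibit a test function (e.g. a truncated Aubin--Talenti bump at small scale, or simply the first eigenfunction of $-\Delta + V$ suitably normalized) showing $m(\rho) < 0 < \inf_{\partial B_R} E$, so minimizing sequences stay strictly inside $B_R$ and do not touch the boundary — this is the crucial ``geometric trapping'' that separates the local minimum level from the mountain pass level.

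Next I would establish compactness of minimizing sequences. Let $(U_n) \subset B_R$ with $E(U_n) \to m(\rho)$; it is bounded in $\Hcal$, and by Ekeland's variational principle we may assume it is a Palais--Smale sequence for $E|_{\Mcal_\rho}$, with Lagrange multipliers $\lambda_n$ bounded (tested by $U_n$ using the $L^2$ constraint and the energy bound). Up to a subsequence $U_n \rightharpoonup U$ in $\Hcal$, $\lambda_n \to \lambda$. The key point, and where assumption \eqref{potential:2} enters decisively, is that the embedding $\Hcal \hookrightarrow L^2(\R^N)$ is \emph{compact} because $V(x) \to +\infty$ forces mass concentration: this is the standard argument that for any $\eps$, outside a large ball $B_\Lambda$ one has $\int_{|x|>\Lambda} U_n^2 \le \eps \into (V+1)U_n^2 / \inf_{|x|>\Lambda}(V+1) \to 0$ uniformly. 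Hence $\|U\|_2 = \rho$, so $U \in \Mcal_\rho$ and the constraint passes to the limit with no loss of mass. Then $U$ solves \eqref{eq:main} with multiplier $\lambda$, and I must upgrade weak to strong convergence: writing $v_n = U_n - U \rightharpoonup 0$, the Brezis--Lieb lemma splits $\|\nabla U_n\|_2^2 = \|\nabla U\|_2^2 + \|\nabla v_n\|_2^2 + o(1)$ and $\|U_n\|_{2^*}^{2^*} = \|U\|_{2^*}^{2^*} + \|v_n\|_{2^*}^{2^*} + o(1)$ (the potential term converges by the compact $L^2$-embedding combined with local Rellich), and testing the PS condition against $v_n$ gives $\|\nabla v_n\|_2^2 - \|v_n\|_{2^*}^{2^*} \to 0$. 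If $\|\nabla v_n\|_2^2 \to \ell > 0$ then $\ell \ge S^{N/2}$ by the Sobolev inequality (the classical ``bubbling'' threshold), which would push $\|\nabla U\|_2^2 + \into VU^2$ above $R$ in the limit energy bookkeeping, contradicting that the limit configuration must still lie in (the closure of) $B_R$ at the minimal level $m(\rho)$ — provided $\rho^*$ is chosen so that $R = A_0 < S^{N/2}$ and $m(\rho) < $ (the best Sobolev level offset). Therefore $\ell = 0$, $U_n \to U$ strongly, $E(U) = m(\rho)$, and $U$ is a local minimizer.

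Finally, positivity and the ground state property: replacing $U$ by $|U|$ does not increase $E$ (the gradient term only decreases) and keeps the $L^2$-norm, so we may take $U \ge 0$; then $-\Delta U + VU - \lambda U = U^{2^*-1} \ge 0$ with $V$ locally Lipschitz gives, via the strong maximum principle (after noting $\lambda < 0$, which follows from testing the equation and the sign of the energy, so that $-\Delta + V - \lambda$ has the right sign), that $U > 0$ everywhere. For the ground state assertion I would argue that \emph{any} normalized solution $W$ with $E(W) < m(\rho)$ would have to satisfy $\|\nabla W\|_2^2 + \into VW^2 \notin [0, A_0]$, i.e. lie in the region where $g < 0$; but combining the Pohozaev-type identity available from \eqref{potential:special} (the inequality $-2V \le \nabla V \cdot x \le CV$ is exactly what makes the Nehari--Pohozaev manifold analysis work and bounds the energy of solutions from below on that far region) one shows no solution can have energy below $m(\rho)$, so $m(\rho)$ is the ground state level and $U$ attains it. The main obstacle I anticipate is the compactness step: ensuring that the possible loss of compactness is \emph{only} through a single Aubin--Talenti bubble at the critical Sobolev energy $S^{N/2}$ (using a profile decomposition or the Brezis--Lieb argument above) and then quantitatively choosing $\rho^*$ so that the local minimum level $m(\rho)$ plus one bubble strictly exceeds what the constraint allows — this is precisely where the interplay among the smallness of $\rho$, the threshold $R < S^{N/2}$, and the Pohozaev constraint \eqref{potential:special} must be balanced.
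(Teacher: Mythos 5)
Your overall strategy --- minimize $E$ on the sublevel region $B_R=\{U\in\Mcal_\rho:\ \|\nabla U\|_2^2+\int V U^2<R\}$ with $R$ at the critical threshold (note your $A_0$ is exactly $S^{N/2}$, not $<S^{N/2}$), recover mass compactness from the trapping potential via the compact embedding $\Hcal\hookrightarrow L^2$, and exclude bubbling because one bubble costs $\tfrac1N S^{N/2}$ while the local minimum level lies far below it for small $\rho$ --- is essentially the paper's proof (Lemmas \ref{lem:geometry}, \ref{energylevel_localmin}, \ref{lem:split}, \ref{lem:local_min_attained}), with your Brezis--Lieb/testing-against-$v_n$ dichotomy playing the role of the splitting Lemma \ref{lem:split}. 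But two of your claims are genuinely wrong, and one of them carries weight. Since $V\ge 0$ and $\inf_{\Mcal_\rho}\big(\|\nabla U\|_2^2+\int VU^2\big)=\lambda_1\rho^2>0$, your own lower bound $E\ge g(A)$ with $g>0$ on $(0,A_0]$ forces $m(\rho)>0$: the normalized first eigenfunction has energy about $\tfrac{\rho^2\lambda_1}{2}>0$, and a concentrated Aubin--Talenti bump is not in $B_R$, so $m(\rho)<0$ is unattainable. The negativity is not needed for the trapping (the correct separation is $m(\rho)\le E(\rho\varphi_1)\le\tfrac{\rho^2\lambda_1}{2}<\inf_{\partial B_R}E$, the latter being close to $\tfrac1N S^{N/2}$, exactly as in Lemma \ref{lem:geometry}), but the companion claim $\lambda<0$ is also false: here $0\le\lambda<\lambda_1$ (Lemma \ref{lem:max_Princ}), the lower bound coming precisely from the Pohozaev identity \eqref{eq:Pohozaev} together with the left inequality in \eqref{potential:special}. (Positivity of $U$ survives anyway, since the strong maximum principle for $-\Delta+(V-\lambda)$ only needs $V-\lambda\in L^{\infty}_{\loc}$, not a sign.)

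The second issue is the ground-state step, which you only gesture at and which, as sketched, rests on the wrong mechanism. Saying that a solution $W$ with $E(W)<m(\rho)$ must ``lie in the region where $g<0$'' proves nothing: $g$ is a bound for arbitrary functions and is still positive for $A$ somewhat above $A_0$; what is needed is an energy bound valid for \emph{solutions} outside $B_R$. The paper's route (Lemmas \ref{lem:max_Princ} and \ref{lem:loc_GS}) is: for any solution $W$, the Nehari identity combined with $\lambda\ge 0$ gives $E(W)=\tfrac1N\big(\|\nabla W\|_2^2+\int VW^2\big)+\tfrac{\lambda\rho^2}{2^*}\ge\tfrac1N\big(\|\nabla W\|_2^2+\int VW^2\big)$, so any solution outside $B_R$ has $E(W)\ge\tfrac1N S^{N/2}>m(\rho)$, while solutions inside have $E\ge m(\rho)$ by definition. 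This is exactly where \eqref{potential:special} enters (through $\lambda\ge0$), and your proposed sign $\lambda<0$ would destroy this inequality. To complete your argument you must replace the two sign claims by the Pohozaev-based bound $\lambda\ge0$ and then carry out this solution-energy estimate; as written, the ground-state conclusion is unsupported.
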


\begin{theorem}\label{thm:MP}
Let $N \geq 3$ and assume that \eqref{potential:1}, \eqref{potential:2} and \eqref{potential:special} hold. Moreover, if $N \geq 6$, assume also
\begin{equation}\label{assump:Nge6}
V(0)=0.
\end{equation}
Then there exists $\rho^{*}>0$ such that, for every $0<\rho<\rho^{*}$, \eqref{eq:main} has a second positive solution, which is at a mountain pass level $E$ on $\mathcal{M}_\rho$.
\end{theorem}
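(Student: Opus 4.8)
The plan is to obtain the second solution as a mountain pass critical point of $E$ constrained to $\mathcal{M}_\rho$, using the local minimizer supplied by Theorem~\ref{thm:GS} to anchor the geometry. Write $m(\rho)$ for the ground state level of Theorem~\ref{thm:GS} and let $U_\rho$ be the corresponding local minimizer. First I would record two facts: (i) by Theorem~\ref{thm:GS} there is $R_0>0$ with $m(\rho)=\inf\{E(U):U\in\mathcal{M}_\rho,\ \|\nabla U\|_2^2\le R_0\}$ and $\inf\{E(U):U\in\mathcal{M}_\rho,\ \|\nabla U\|_2^2=R_0\}\ge m(\rho)+\delta_0$ for some $\delta_0>0$; (ii) along the fibering $h\mapsto U_h:=h^{N/2}U(h\,\cdot)\in\mathcal{M}_\rho$ one has $E(U_h)\to-\infty$ as $h\to+\infty$, because the term $-\tfrac{h^{2^*}}{2^*}\|U\|_{2^*}^{2^*}$ dominates, the potential term being only $O(h^2)$ thanks to the bound $V(x/h)\le h^2 V(x)$ ($h\ge1$) that follows from $-2V\le\nabla V\cdot x$. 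Hence $E$ has a mountain pass geometry on $\mathcal{M}_\rho$, and I set
\[
\gamma(\rho)=\inf_{\eta\in\Gamma}\ \max_{t\in[0,1]}E(\eta(t)),
\]
$\Gamma$ being the continuous paths in $\mathcal{M}_\rho$ from a point of $\{\|\nabla U\|_2^2<R_0\}$ with energy close to $m(\rho)$ to a point of $\{\|\nabla U\|_2^2>R_0\}$ with $E<m(\rho)$; every such path crosses the sphere $\{\|\nabla U\|_2^2=R_0\}$, so $\gamma(\rho)\ge m(\rho)+\delta_0>m(\rho)$, and $\gamma(\rho)<\infty$. Replacing $\mathcal{M}_\rho$ by its positive cone changes none of this, since $E(|U|)\le E(U)$.

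The heart of the proof is the strict energy estimate
\[
\gamma(\rho)<m(\rho)+\tfrac1N S^{N/2},
\]
$S$ denoting the best Sobolev constant; this is exactly the threshold below which bubbling cannot occur. To establish it I would test $\gamma(\rho)$ with a path built by grafting a suitably truncated, $L^2$-renormalized Aubin-Talenti bubble $\Psi_\varepsilon$ onto $U_\rho$ — the bubble being centred at the origin when $N\ge6$ (so that $V=0$ there) and at a minimum point of $V$ otherwise — and then following the fibering dilations of the grafted function; the maximum of $E$ along this path equals $m(\rho)+\tfrac1N S^{N/2}+o(1)$ as $\varepsilon\to0$, and everything hinges on showing the remainder is strictly negative. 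After the leading $O(\varepsilon^{(N-2)/2})$ cross terms cancel, one is left with a negative contribution from the $L^2$-renormalization (of order $\varepsilon^{(N-2)/2}$, with sign dictated by the positivity of the ground state multiplier) competing against the loss $\sim V(x_0)\,\varepsilon^{2}$ produced by the potential energy of the bubble; for $N\le5$ the gain dominates for any admissible $V$, whereas for $N\ge6$ the $\varepsilon^{2}$ loss is not dominated and can only be removed by the hypothesis $V(0)=0$. This is the same threshold phenomenon as in the Brezis-Nirenberg problem, and I expect the bookkeeping here to be the main technical difficulty.

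Next I would produce a bounded Palais-Smale sequence at level $\gamma(\rho)$. Working with the augmented functional $\widetilde E(s,U)=E(s\star U)$, $s\star U:=e^{Ns/2}U(e^s\,\cdot)$, on $\mathbb{R}\times\mathcal{M}_\rho$ — which is of class $C^1$ with growth controlled by \eqref{potential:special} — the minimax for $\widetilde E$ (at the same level $\gamma(\rho)$) yields $U_n\in\mathcal{M}_\rho$, which may be taken in the positive cone, with $E(U_n)\to\gamma(\rho)$, $\nabla_{\mathcal{M}_\rho}E(U_n)\to0$, and in addition $P(U_n)\to0$, where $P(U)=\|\nabla U\|_2^2-\tfrac12\int(\nabla V\cdot x)U^2-\|U\|_{2^*}^{2^*}$ is the Pohozaev functional. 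Eliminating $\|U_n\|_{2^*}^{2^*}$ between $E(U_n)\to\gamma(\rho)$ and $P(U_n)\to0$ gives
\[
\Big(\tfrac12-\tfrac1{2^*}\Big)\|\nabla U_n\|_2^2+\tfrac12\!\int VU_n^2+\tfrac1{2\cdot 2^*}\!\int(\nabla V\cdot x)U_n^2=\gamma(\rho)+o(1),
\]
and since $\nabla V\cdot x\ge-2V$ the left-hand side is $\ge\tfrac1N\big(\|\nabla U_n\|_2^2+\int VU_n^2\big)$; hence $\{U_n\}$ is bounded in $\mathcal{H}$, and the Lagrange multipliers $\lambda_n$, determined by $\lambda_n\rho^2=\|\nabla U_n\|_2^2+\int VU_n^2-\|U_n\|_{2^*}^{2^*}+o(1)$, are bounded as well.

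Finally I would pass to the limit. Since $V$ is trapping (i.e.\ $V\to+\infty$), $\mathcal{H}$ embeds compactly into $L^q(\mathbb{R}^N)$ for $2\le q<2^*$; thus, up to a subsequence, $U_n\rightharpoonup U\ge0$ in $\mathcal{H}$, $U_n\to U$ in $L^2$ — whence $U\in\mathcal{M}_\rho$, in particular $U\not\equiv0$ — $\lambda_n\to\lambda$, and $U$ is a constrained critical point solving \eqref{eq:main} with multiplier $\lambda$; by Definition~\ref{def:GS} this forces $E(U)\ge m(\rho)$. A Struwe-type / Brezis-Lieb decomposition of the bounded Palais-Smale sequence then gives $E(U_n)\to E(U)+k\cdot\tfrac1N S^{N/2}$ for some integer $k\ge0$: here the trapping potential rules out vanishing and escape of mass to infinity, so the only possible defect of compactness is finitely many standard bubbles solving $-\Delta W=|W|^{2^*-2}W$, each of energy $\ge\tfrac1N S^{N/2}$. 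If $k\ge1$, then $\gamma(\rho)=E(U)+k\tfrac1N S^{N/2}\ge m(\rho)+\tfrac1N S^{N/2}$, contradicting the strict estimate above; therefore $k=0$, $U_n\to U$ strongly in $\mathcal{H}$, and $E(U)=\gamma(\rho)>m(\rho)$. Thus $U$ is a normalized solution at the mountain pass level, distinct from the ground state; being a nonnegative solution of $-\Delta U=(\lambda-V)U+U^{2^*-1}$, it is positive by the strong maximum principle, which completes the proof.
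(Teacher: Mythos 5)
Your overall architecture coincides with the paper's: mountain pass geometry anchored at the local minimizer of Theorem \ref{thm:GS}, a strict upper bound of the minimax level by the ground state level plus one bubble, a bounded Palais--Smale sequence with Pohozaev information produced through the augmented functional $\widetilde E(s,U)=E(s\star U)$ (as in Lemma \ref{boundedPS}), and compactness via a Struwe-type splitting combined with $E(U)\ge m(\rho)$ for the weak limit (as in Lemmas \ref{lem:split} and \ref{lem:mon_minmax_attained}). The boundedness computation and the final contradiction argument are essentially the paper's.

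The genuine gap is in the decisive step, the strict estimate $\gamma(\rho)<m(\rho)+\tfrac1N S^{N/2}$, which you only sketch and whose bookkeeping, as stated, does not work. Testing with $\widetilde W_{t,\eps}$ obtained from $u_\mu+t u_\eps$ after $L^2$-renormalization, the linear-in-$t$ cross terms (of order $\eps^{(N-2)/2}$) cancel, to leading order, against the part of the renormalization correction proportional to $\lambda_\mu\int u_\mu u_\eps$; what survives from the renormalization is only $-\lambda_\mu\,t^2\|u_\eps\|_2^2/(s+1)$, which is of order $\|u_\eps\|_2^2$ (i.e.\ $\eps$, $\eps^2|\ln\eps|$, $\eps^2$ for $N=3,4,\ge5$ by \eqref{est:struwe}), \emph{not} of order $\eps^{(N-2)/2}$ as you claim. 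If the multiplier term were the only gain, it would be of the same order as the potential loss $\|V\|_{L^\infty(B_R)}\|u_\eps\|_2^2$ in every dimension, and your assertion that for $N\le5$ ``the gain dominates for any admissible $V$'' would not follow; you would seemingly need smallness of $V$ near the bubble center for all $N$, which is not what Theorem \ref{thm:MP} assumes. In the paper (Lemma \ref{lem:es_case3}), the mechanism is different and dimension-dependent: for $N=3,4,5$ the strict gain is the interaction term $-\mu t^{2^*-1}\int u_\mu u_\eps^{2^*-1}\sim -C\eps^{(N-2)/2}$, retained through the superadditivity $(1+t)^{2^*}\ge 1+t^{2^*}+2^*t+2^*t^{2^*-1}$ and estimated via \eqref{est:Up}, which dominates the potential loss since $\|u_\eps\|_2^2=o(\eps^{(N-2)/2})$ there; for $N\ge6$ that interaction is no longer dominant, and the gain is exactly the multiplier term $-\lambda_\mu t^2\|u_\eps\|_2^2/(s+1)\sim-\eps^2$, which beats the loss $\tfrac12\|V\|_{L^\infty(B_R)}t^2\|u_\eps\|_2^2$ only because \eqref{assump:Nge6} and the local Lipschitz bound \eqref{potential:1} make $\|V\|_{L^\infty(B_R)}$ small for $R$ small. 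Since this computation is precisely where the hypothesis $V(0)=0$ enters and is the technical heart of the theorem, your proposal as written leaves the main point unproved; the rest of your argument would go through once this estimate is carried out correctly.
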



\begin{remark}
Since $V$ is a locally Lipschitz potential with $V \ge 0$, the strong maximum principle implies that the non-negative solutions obtained in Lemma \ref{lem:local_min_attained} and \ref{lem:mon_minmax_attained} are in fact strictly positive.
\end{remark}

\begin{remark}
A typical example satisfying assumptions \eqref{potential:1}-\eqref{potential:special} is the polynomial-type potential
\[
V(x)=|x|^{\alpha},
\]
for any $\alpha > 0$. Moreover, the assumption \eqref{assump:Nge6} is also satisfied.
\end{remark}

\begin{remark}
We will see that the assumption \eqref{assump:Nge6} can be weakened. By Lemma \ref{lem:es_case3}, we obtain that 
\begin{equation}\label{assump:notoptimal}
\|V\|_{L^{\infty}(B_{R}(0))}< \frac{2s^2}{s+1} \lambda_{\mu},
\end{equation}
where $s \to 1^+$ defined in Lemma \ref{lem:es_case3}, $\lambda_{\mu}>0$ denotes the Lagrange multiplier associated with the ground state $u_{\mu}$ introduced in Section \ref{sec:GS}, $R>0$ can be chosen arbitrarily small.
\end{remark}

\begin{remark}\label{rmk:diff_estimate}
The assumption \eqref{assump:notoptimal} may not be optimal for the proof of the mountain-pass level. We can employ alternative estimation methods, as in \cite{MR4847285}. By \cite[Lemma 3.7.]{MR4847285}, we obtain a different estimate, for $0<\rho<\rho^{**}\leq \rho^{*}$, $N \geq 4$ and
\[
\|V\|_{L^{\infty}(B_{R}(0))}< \frac{1}{N} \lambda_{1}+h(\rho),
\]
where $\lambda_1$ is the first eigenvalue defined in \eqref{eq:eigen1}, $h(\rho) \to 0$ as $\rho \to 0$. However, this approach fails in the case $N=3$. Since in $N=3$, it requires the additional assumption
\[
\lambda_1(B_{R}(0))<\frac{4}{3}\lambda_1,
\]
which in turn implies that $R$ must be sufficiently large. This would require $V$ to be small on a large ball, which is not a desirable assumption.
\end{remark}

\begin{remark}
With the help of the left-hand side of assumption \eqref{potential:special}, we can determine the sign of $\lambda$. Moreover, an interesting question to consider is what would happen in the absence of this condition.

In this case, the local minimizer $u_{\mu}$ obtained in Theorem \ref{thm:GS} may not be a ground state. More precisely, Lemma \ref{lem:loc_GS} may no longer hold. It follows that Lemma \ref{lem:es_case3} fails. However, the energy level can still be estimated using the method described in Remark \ref{rmk:diff_estimate}. By applying Struwe's monotonicity trick (a nontrivial adaptation of \cite{JJ_BPS}), we obtain the desired result on a set of positive measure.

\end{remark}

\begin{remark}
The main difficulties in our analysis are the following. First, the boundedness of the Palais-Smale sequences is nontrivial due to the normalization constraint. Second, the compactness of the Palais-Smale sequences is delicate because of the presence of the Sobolev critical exponent $2^*$.  

To overcome these difficulties, we employ two main tools. For the first issue, we use a scaling technique introduced in \cite{MR1430506}. For the second, we follow the idea in  \cite[Ch.~III, Thm.~3.1]{Struwebook} and develop the following approach to analyze the loss of compactness: if a bounded Palais-Smale sequence does not converge strongly, the corresponding critical levels jumps of a fixed quantity.
\end{remark}

The paper is organized as follows: In Section \ref{sec:prel}, we first introduce a change of variables that moves the parameter from the constraint to the equation (and to the energy). We then present several preliminary results, focusing in particular on the estimate of $\lambda$ and the compactness properties of Palais-Smale sequences; Section \ref{sec:MP_geo} is devoted to the description of the mountain pass geometry and the introduction of two distinct candidate critical levels of $E$; finally, we prove our main results in Sections \ref{sec:GS} (Theorem \ref{thm:GS}) and \ref{sec:mp} (Theorems \ref{thm:MP}).

\section{Notation and preliminary results}\label{sec:prel}

For convenience of calculation we apply the transformation
\begin{equation}\label{trans}
u= \frac{1}{\rho} U,\qquad \ \ \ \ \ \mu= \rho^{2^{*}-2}>0,
\end{equation}
to convert problem \eqref{eq:main} into the following one, which also incorporates the sign condition:
\begin{equation}\label{eq:main1}
\begin{cases}
-\Delta u +V(x)u = \lambda u + \mu |u|^{2^{*}-2}u  & \text{in }\mathbb{R}^{N},\smallskip\\
u\geq 0,\quad \int_{\mathbb{R}^{N}} u^2\,dx = 1..
\end{cases}
\end{equation}
Thus, the solutions of \eqref{eq:main1} correspond to critical points of the energy functional $E_{\mu}: \mathcal{H}\rightarrow \mathbb{R}$, defined by
\begin{equation}\label{func:main1}
E_{\mu}(u)=\frac{1}{2}\int_{\mathbb{R}^{N}}|\nabla u|^{2}dx + \frac{1}{2}\int_{\mathbb{R}^{N}}V(x) u^{2}dx-\frac{\mu}{2^{*}}\int_{\mathbb{R}^{N}}|u|^{2^{*}}dx
\end{equation}
constrained to the $L^{2}$-sphere
\begin{equation*}\label{manifold:1}
\mathcal{M}:=\mathcal{M}_1=\left\{u\in \mathcal{H}: \int_{\mathbb{R}^{N}} u^{2}=1\right\}.
\end{equation*}

Since $U=\rho u$ and $u \in \mathcal{M}$, we have $U\in \mathcal{M}_{\rho}$. It follows that, once the existence of a local minimizer, a ground state, and a mountain pass solution for $E_\mu$ over $\mathcal{M}$ is established, Theorems \ref{thm:GS} and \ref{thm:MP} can be derived by the change of variable in \eqref{trans}.

From the definition of $ \mathcal{H}$, we have the compact embedding
\begin{equation}\label{embed:trap}
\mathcal{H}\hookrightarrow L^{p}(\R^{N}) \  \quad \mbox{for } 2 \leq p <2^{*}.
\end{equation}

By \eqref{potential:1} and \eqref{potential:2}, we introduce the the first eigenvalue $\lambda_{1}>0$ and the first eigenfunction $\varphi_{1}>0$, $\|\varphi_{1}\|^{2}_{2}=1$ with $\varphi_{1} \in \mathcal{H}$, $\varphi_{1} \in C^{2,\beta}(\R^{N})$ for every $0<\beta<1$, of the problem
\begin{equation}\label{eq:eigen}
-\Delta \varphi_{1}+V(x) \varphi_{1}= \lambda_{1} \varphi_{1},
\end{equation}
(see \cite[end of Section 1]{ntvDCDS}).

First, we have the following classical Pohozaev identity.
\begin{lemma}\label{lem:Pohozaev}
If $u$ solves
\begin{equation*}
\begin{cases}
-\Delta u +V(x)u = \lambda u + \mu |u|^{2^{*}-2}u  & \text{in }\mathbb{R}^{N},\smallskip\\
u\geq 0,\quad \int_{\mathbb{R}^{N}} u^2\,dx = 1.
\end{cases}
\end{equation*}
We have $u$ satisfies the following Pohozaev identity
\begin{equation}\label{eq:Pohozaev}
\int_{\R^N} |\nabla u|^2 \,dx- \frac{1}{2} \int_{\R^N} \nabla V(x) \cdot x u^2 \, dx = \mu \int_{\R^N} |u|^{2^{*}} dx
\end{equation}
\end{lemma}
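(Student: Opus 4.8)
The plan is to combine two integral identities obtained by testing the equation against two different functions: $u$ itself and the dilation field $x\cdot\nabla u$. The key algebraic feature is that the Sobolev-critical exponent satisfies $\frac{N}{2^{*}}=\frac{N-2}{2}$, which is exactly what makes the Lagrange multiplier $\lambda$ drop out of the resulting combination.

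Before testing, I would record the regularity and integrability needed to justify the integrations by parts. Since $u\in H^{1}(\R^{N})$ solves $-\Delta u=\lambda u+\mu|u|^{2^{*}-2}u-V(x)u$ with $V$ locally Lipschitz, elliptic regularity gives $u\in W^{2,q}_{\loc}\cap C^{1,\alpha}_{\loc}$; moreover $u\in\Hcal$, so $|\nabla u|^{2}$, $Vu^{2}$, $u^{2}$ and (by the Sobolev inequality) $|u|^{2^{*}}$ are integrable on all of $\R^{N}$, and by the growth bound in \eqref{potential:special}, which gives $|\nabla V(x)\cdot x|\le\max\{2,C\}\,V(x)$, the integral $\int_{\R^{N}}\nabla V\cdot x\,u^{2}$ is absolutely convergent. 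With these facts in hand, the test function $x\cdot\nabla u$ is made admissible through the usual cut-off procedure: one multiplies the equation by $\psi_{R}(x)(x\cdot\nabla u)$, where $\psi_{R}=\psi(\cdot/R)$ is a smooth cut-off equal to $1$ on $B_{R}$ and supported in $B_{2R}$, integrates by parts, and lets $R\to\infty$; all error terms are supported in the annulus $\{R\le|x|\le 2R\}$ and are controlled by $\int_{R\le|x|\le 2R}\big(|\nabla u|^{2}+Vu^{2}+|u|^{2^{*}}+u^{2}\big)$, which tends to $0$ along a suitable sequence $R_{k}\to\infty$.

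Granting this, testing against $u$ and using $\int_{\R^{N}}u^{2}=1$ gives
\begin{equation}\label{pf:nehari}
\int_{\R^{N}}|\nabla u|^{2} + \int_{\R^{N}}Vu^{2} = \lambda + \mu\int_{\R^{N}}|u|^{2^{*}},
\end{equation}
while multiplying by $x\cdot\nabla u$ and using the standard formulas $\int(-\Delta u)(x\cdot\nabla u)=-\frac{N-2}{2}\int|\nabla u|^{2}$, $\int u(x\cdot\nabla u)=-\frac{N}{2}\int u^{2}$, $\int|u|^{2^{*}-2}u(x\cdot\nabla u)=-\frac{N}{2^{*}}\int|u|^{2^{*}}$ and $\int Vu(x\cdot\nabla u)=-\frac12\int(NV+\nabla V\cdot x)u^{2}$ yields, after inserting $\int u^{2}=1$ and $\frac{N}{2^{*}}=\frac{N-2}{2}$,
\begin{equation}\label{pf:poho}
-\frac{N-2}{2}\int_{\R^{N}}|\nabla u|^{2} = -\frac{N}{2}\lambda - \frac{N-2}{2}\mu\int_{\R^{N}}|u|^{2^{*}} + \frac{N}{2}\int_{\R^{N}}Vu^{2} + \frac12\int_{\R^{N}}\nabla V\cdot x\,u^{2}.
\end{equation}
Substituting $\lambda$ from \eqref{pf:nehari} into \eqref{pf:poho}, the two $\frac{N}{2}\int Vu^{2}$ terms cancel, the $\mu\int|u|^{2^{*}}$ terms combine (again via $\frac{N}{2}-\frac{N-2}{2}=1$), and the $\int|\nabla u|^{2}$ terms combine with net coefficient $1$, leaving exactly \eqref{eq:Pohozaev}. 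The only genuinely delicate step is the first one — verifying that $x\cdot\nabla u$ is an admissible test function, i.e.\ that the boundary and error terms at infinity vanish; everything else is a routine computation.
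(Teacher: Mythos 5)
Your proof is correct, and the algebra checks out: combining the identity from testing with $u$ and the dilation (Pohozaev) identity from testing with $x\cdot\nabla u$, the multiplier $\lambda$ and the $\tfrac{N}{2}\int V u^2$ terms cancel precisely because $\tfrac{N}{2^*}=\tfrac{N-2}{2}$, leaving exactly \eqref{eq:Pohozaev}. The paper gives no proof of this lemma (it simply invokes the ``classical Pohozaev identity''), so your derivation -- including the cut-off justification of $x\cdot\nabla u$ as a test function, which relies on $u\in\mathcal{H}$ and on the bound $|\nabla V(x)\cdot x|\le\max\{2,C\}V(x)$ from \eqref{potential:special} to make all integrals absolutely convergent -- is exactly the standard argument the author is implicitly using; equivalently, \eqref{eq:Pohozaev} is the statement that $\frac{d}{dh}E_\mu(h^{N/2}u(h\cdot))\big|_{h=1}=0$ along the mass-preserving dilation, consistent with \eqref{func:scaling}.
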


We then establish an estimate for the value of $\lambda$.

\begin{lemma}\label{lem:max_Princ}
Let $(u,\lambda)\in \mathcal{H}\times \R$ be a  solution of \eqref{eq:main1}, where $u$ is nontrivial and non-negative.
Then 
\[
0\leq \lambda<\lambda_1.
\]
\end{lemma}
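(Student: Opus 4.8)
The plan is to establish the two inequalities separately. For the lower bound $\lambda \ge 0$, the natural idea is to test the equation \eqref{eq:main1} against $u$ itself and combine it with the Pohozaev identity \eqref{eq:Pohozaev} from Lemma \ref{lem:Pohozaev}, exploiting the left-hand side of assumption \eqref{potential:special}, namely $-2V(x) \le \nabla V(x)\cdot x$. Testing against $u$ gives
\[
\int_{\R^N}|\nabla u|^2 + \int_{\R^N} V(x)u^2 = \lambda \int_{\R^N} u^2 + \mu\int_{\R^N}|u|^{2^*} = \lambda + \mu\int_{\R^N}|u|^{2^*},
\]
using $\|u\|_2^2=1$. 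Substituting $\mu\int|u|^{2^*}$ from the Pohozaev identity \eqref{eq:Pohozaev} yields
\[
\lambda = \int_{\R^N} V(x)u^2 + \frac12\int_{\R^N}\nabla V(x)\cdot x\, u^2 \ge \int_{\R^N} V(x)u^2 - \int_{\R^N} V(x)u^2 = 0,
\]
where the inequality is exactly the lower bound in \eqref{potential:special}. This also uses $V \ge 0$ only implicitly; the essential point is the Pohozaev substitution.

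For the strict upper bound $\lambda < \lambda_1$, I would argue by a comparison with the first eigenfunction $\varphi_1 > 0$ of \eqref{eq:eigen}. The idea is that if $\lambda \ge \lambda_1$, then $u$ would be a positive supersolution (or solution of an equation with a nonnegative extra term) of an operator whose principal eigenvalue is $\lambda_1$, contradicting the simplicity/positivity structure. Concretely, since $\mu|u|^{2^*-2}u \ge 0$, the function $u$ satisfies $-\Delta u + V(x)u \ge \lambda u$ in $\R^N$ with $u \ge 0$, $u \not\equiv 0$. Testing this against $\varphi_1$ and the eigenvalue equation \eqref{eq:eigen} against $u$, and subtracting, gives
\[
0 \le \mu\int_{\R^N}|u|^{2^*-2}u\,\varphi_1 = (\lambda - \lambda_1)\int_{\R^N} u\,\varphi_1.
\]
Since $\varphi_1 > 0$ and $u \ge 0$ is nontrivial, $\int_{\R^N} u\varphi_1 > 0$, forcing $\lambda \ge \lambda_1$; but then the displayed identity forces $\int|u|^{2^*-2}u\,\varphi_1 = 0$, hence $u \equiv 0$ (again using $\varphi_1>0$), a contradiction. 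Therefore $\lambda < \lambda_1$. One should be slightly careful that all the integrations by parts are justified in $\mathcal{H}$: this is where the compact embedding \eqref{embed:trap}, the regularity $\varphi_1 \in C^{2,\beta}(\R^N) \cap \mathcal{H}$, and standard elliptic regularity / decay for $u$ come in, but these are routine given the trapping assumptions \eqref{potential:1}--\eqref{potential:2}.

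The main obstacle I anticipate is not the algebra but the rigor of the test-function computations: justifying that $u$ decays fast enough (and $\varphi_1 u$, $\nabla V\cdot x\, u^2$ etc.\ are integrable) so that the pairing of $u$ against $\varphi_1$ and the Pohozaev manipulation are legitimate in the space $\mathcal{H}$. For the lower bound one also needs the right-hand side of \eqref{potential:special} (the bound $\nabla V\cdot x \le CV$) to ensure $\int \nabla V\cdot x\, u^2$ is finite and the Pohozaev identity genuinely applies; this integrability is presumably already embedded in the statement of Lemma \ref{lem:Pohozaev}. A secondary subtlety is ruling out the equality case $\lambda = \lambda_1$, which the strict positivity of $\varphi_1$ handles cleanly as above.
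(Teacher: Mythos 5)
Your strategy coincides with the paper's on both halves: the lower bound $\lambda\ge 0$ is obtained exactly as in the paper, by testing \eqref{eq:main1} with $u$, subtracting the Pohozaev identity \eqref{eq:Pohozaev} to get $\lambda=\int_{\R^N}V u^2\,dx+\tfrac12\int_{\R^N}\nabla V\cdot x\,u^2\,dx$, and invoking the left inequality of \eqref{potential:special}; the upper bound is obtained, as in the paper, by pairing the equation with the first eigenfunction $\varphi_1$ and the eigenvalue equation \eqref{eq:eigen} with $u$.

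There is, however, a sign slip in your upper-bound step, and the contradiction argument as written does not close. Subtracting the two tested identities yields $(\lambda_1-\lambda)\int_{\R^N}u\varphi_1\,dx=\mu\int_{\R^N}|u|^{2^*-2}u\,\varphi_1\,dx$, not $(\lambda-\lambda_1)\int u\varphi_1$. With your sign, the assumption $\lambda\ge\lambda_1$ is perfectly compatible with both sides being nonnegative, and the claim that it ``forces $\int|u|^{2^*-2}u\,\varphi_1=0$'' is unjustified whenever $\lambda>\lambda_1$. With the correct sign no contradiction argument is needed: since $u\ge 0$ is nontrivial and $\varphi_1>0$, the right-hand side is strictly positive and $\int u\varphi_1>0$, hence $\lambda_1-\lambda>0$ directly. (This corrected version is in fact marginally cleaner than the paper's, which appeals to strict positivity of $u$; nonnegativity plus nontriviality suffices.) The integrability issues you flag are indeed routine here: $u,\varphi_1\in\mathcal{H}$ justify the pairings, and the finiteness of $\int\nabla V\cdot x\,u^2$ is part of the content of Lemma \ref{lem:Pohozaev} together with \eqref{potential:special}, as you anticipated.
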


\begin{proof}
By taking $\varphi_{1}$ and $u$ as test functions in \eqref{eq:main1} and \eqref{eq:eigen}, respectively, we deduce
\begin{align*}
     \int_{\R^{N}} \nabla u \cdot\nabla \varphi_{1}dx +   \int_{\R^{N}} V(x) u  \varphi_{1}dx&=  \lambda\int_{\R^{N}} u  \varphi_{1}dx +\mu\int_{\R^{N}}  |u|^{2^{*}-2}u  \varphi_{1}dx,\\
     \int_{\R^{N}} \nabla u \cdot\nabla \varphi_{1}dx +   \int_{\R^{N}} V(x) u  \varphi_{1}dx&= \lambda_{1}\int_{\R^{N}}  u  \varphi_{1} dx, 
\end{align*}
which means that
\begin{equation*}
  (\lambda_{1}-\lambda) \int_{\R^{N}}  u  \psi_{1} dx = \mu\int_{\R^{N}}  |u|^{2^{*}-2}u  \psi_{1}dx.
\end{equation*}
Recalling that both $u$ and $\psi_{1}$ are strictly positive in $\R^N$, and $\mu>0$, we obtain that $\lambda <  \lambda_{1}$. 

On the other hand, by \eqref{eq:Pohozaev} and \eqref{eq:main1}, we have
\[
\lambda \int_{\R^N} u^2 dx= \frac{1}{2} \int_{\R^N} \nabla V(x) \cdot x u^2 \, dx +\int_{\R^N} V(x) u^{2} dx.
\]
The result is straightforward under assumption assumption \eqref{potential:special} and $\|u\|_2 =1$.
\end{proof}

As an immediate consequence of Lemma \ref{lem:max_Princ}, it follows that $E_{\mu}$ is bounded from below.
\begin{lemma}\label{lem:loc_GS}
Let $u$ be a solution of \eqref{eq:main1}. Then
\[
E_{\mu}(u) \geq \frac{1}{N}\left(\|u\|^{2}_{\mathcal{H}}-1\right).
\]
\end{lemma}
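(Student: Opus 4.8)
The plan is to combine the Nehari-type identity obtained by using $u$ itself as a test function in \eqref{eq:main1} with the sign information $\lambda \geq 0$ provided by Lemma~\ref{lem:max_Princ}. First I would multiply the equation in \eqref{eq:main1} by $u$ and integrate over $\R^N$; since $\int_{\R^N} u^2\,dx = 1$, this gives
\[
\int_{\R^N} |\nabla u|^2\,dx + \int_{\R^N} V(x) u^2\,dx = \lambda + \mu \int_{\R^N} |u|^{2^*}\,dx,
\]
and hence $\mu \int_{\R^N} |u|^{2^*}\,dx = \int_{\R^N}\bigl(|\nabla u|^2 + V(x) u^2\bigr)\,dx - \lambda$.

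Substituting this expression for the critical term into the definition \eqref{func:main1} of $E_\mu$ and using the elementary identity $\tfrac12 - \tfrac{1}{2^*} = \tfrac1N$ (valid for $2^*=\tfrac{2N}{N-2}$), I obtain
\[
E_\mu(u) = \Bigl(\tfrac12 - \tfrac1{2^*}\Bigr)\int_{\R^N}\bigl(|\nabla u|^2 + V(x) u^2\bigr)\,dx + \frac{\lambda}{2^*} = \frac1N \int_{\R^N}\bigl(|\nabla u|^2 + V(x) u^2\bigr)\,dx + \frac{\lambda}{2^*}.
\]
Next I would rewrite the first integral via the norm \eqref{space:norm}: again using $\int_{\R^N} u^2\,dx = 1$ we have $\int_{\R^N}\bigl(|\nabla u|^2 + V(x) u^2\bigr)\,dx = \|u\|_{\mathcal{H}}^2 - 1$, so that
\[
E_\mu(u) = \frac1N\bigl(\|u\|_{\mathcal{H}}^2 - 1\bigr) + \frac{\lambda}{2^*}.
\]
Finally, Lemma~\ref{lem:max_Princ} yields $\lambda \geq 0$, whence $\lambda/2^* \geq 0$ and the asserted inequality follows.

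I do not expect any genuine obstacle in this argument: it is a short computation. The only points that deserve a line of care are the algebraic identity $\tfrac12-\tfrac1{2^*}=\tfrac1N$ and the admissibility of $u$ as a test function (which holds since $u$ is a solution lying in $\mathcal{H}$, and the term $\int_{\R^N}|u|^{2^*}\,dx$ is finite by the Sobolev embedding). The substantive input—the nonnegativity of the Lagrange multiplier $\lambda$, which ultimately rests on the left-hand inequality in \eqref{potential:special} through the Pohozaev identity \eqref{eq:Pohozaev}—has already been established in Lemma~\ref{lem:max_Princ}.
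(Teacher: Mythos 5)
Your proof is correct and coincides with the paper's own argument: both test the equation with $u$ to get $\mu\|u\|_{2^*}^{2^*}=\|u\|_{\mathcal H}^2-1-\lambda$, substitute into $E_\mu$ using $\tfrac12-\tfrac1{2^*}=\tfrac1N$, and conclude via $\lambda\geq 0$ from Lemma~\ref{lem:max_Princ}. No issues.
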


\begin{proof}
By Lemma \ref{lem:max_Princ}, we know that $\lambda\geq 0$. Since $u$ solves \eqref{eq:main1}, we have
\begin{equation*}
\begin{aligned}
E_{\mu}(u)&=\frac{1}{2}\|u\|^{2}_{\mathcal{H}}-\frac{1}{2}-\frac{\mu}{2^{*}}\|u\|^{2^{*}}_{2^{*}}\\
&=\frac{1}{2}\|u\|^{2}_{\mathcal{H}}-\frac{1}{2}- \frac{1}{2^{*}}\left(\|u\|^{2}_{\mathcal{H}}-1\right)+\frac{1}{2^{*}} \lambda\\
& =\frac{1}{N}\left(\|u\|^{2}_{\mathcal{H}}-1\right) +\frac{1}{2^{*}} \lambda\geq \frac{1}{N}\left(\|u\|^{2}_{\mathcal{H}}-1\right),
\end{aligned}
\end{equation*}
and the lemma follows.
\end{proof}

From \eqref{potential:special} and Gr\"onwall's inequality, we have the following lemma concerning the potential item.
\begin{lemma}\label{lem:pot_scal}
For every $h>0$, there exists $C(h)>0$ such that
\begin{equation*}\label{eq:pot_scal}
\int_{\R^N}V(hx) u^2 dx < C(h) \int_{\R^N}V(x) u^2 dx.
\end{equation*}
\end{lemma}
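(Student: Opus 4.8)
The statement to prove is Lemma~\ref{lem:pot_scal}: for every $h>0$ there exists $C(h)>0$ such that $\int_{\R^N}V(hx)\,u^2\,dx < C(h)\int_{\R^N}V(x)\,u^2\,dx$ for all $u$ (presumably $u\in\mathcal H$). The idea announced in the text is that this follows from the right-hand inequality in \eqref{potential:special}, namely $\nabla V(x)\cdot x\le C\,V(x)$, via a Grönwall-type comparison. The key observation is that the function $g(t):=V(tx)$ (for fixed $x\neq 0$) satisfies $g'(t)=\nabla V(tx)\cdot x = \frac{1}{t}\,\nabla V(tx)\cdot(tx)\le \frac{C}{t}\,V(tx)=\frac{C}{t}\,g(t)$ for $t>0$. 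This is a linear differential inequality whose integration between $1$ and $h$ (or $h$ and $1$ if $h<1$) gives a pointwise bound $V(hx)\le C(h)\,V(x)$ with $C(h)$ depending only on $h$ and the constant $C$.

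\medskip

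In detail, I would proceed as follows. \emph{Step 1: the pointwise comparison.} Fix $x\in\R^N$ with $x\neq 0$ and set $g(t)=V(tx)$ for $t>0$; since $V$ is locally Lipschitz it is differentiable a.e.\ and absolutely continuous on compact subintervals of $(0,\infty)$, so the chain rule $g'(t)=\nabla V(tx)\cdot x$ holds a.e. Using $\nabla V(y)\cdot y\le C\,V(y)$ with $y=tx$ we get $t\,g'(t)\le C\,g(t)$, i.e.\ $g'(t)\le \frac{C}{t}\,g(t)$ a.e. Since $V\ge 0$, $g\ge 0$, and Grönwall's inequality (applied to $\frac{d}{dt}\big(t^{-C}g(t)\big)\le 0$) yields, for $h\ge 1$,
\[
V(hx)=g(h)\le h^{C}\,g(1)=h^{C}\,V(x),
\]
while for $0<h<1$ one applies the same argument on $[h,1]$ (or uses that $t\mapsto t^{-C}g(t)$ is nonincreasing) to obtain $V(hx)\le h^{-C'}V(x)$ for a suitable constant; in either case $V(hx)\le C(h)\,V(x)$ for a.e.\ $x$, with $C(h)$ depending only on $h$ and $C$, and by continuity of $V$ this extends to all $x$ (including $x=0$, where $V(0)\le C(h)V(0)$ is trivial once $C(h)\ge 1$).

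\medskip

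\emph{Step 2: integrate.} Multiply the pointwise inequality $V(hx)\le C(h)V(x)$ by $u^2(x)\ge 0$ and integrate over $\R^N$ to conclude
\[
\int_{\R^N}V(hx)\,u^2\,dx\le C(h)\int_{\R^N}V(x)\,u^2\,dx,
\]
which is the claim (the strict inequality in the statement can be arranged by enlarging $C(h)$, or is understood in the weak sense; it is immaterial for the applications).

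\medskip

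\emph{Main obstacle.} The only delicate point is justifying the a.e.\ differentiation and the Grönwall step for a merely locally Lipschitz (not $C^1$) potential: one must know that $t\mapsto V(tx)$ is absolutely continuous on $[\delta, h]$ for every $\delta>0$ and that its derivative is $\nabla V(tx)\cdot x$ a.e., which is standard for Lipschitz functions composed with a $C^1$ curve. A secondary subtlety is the behaviour near $t=0$: the factor $1/t$ in $g'(t)\le \frac{C}{t}g(t)$ is singular there, but since we only integrate over $[\min(1,h),\max(1,h)]\subset(0,\infty)$ this never enters, so no hypothesis on $V$ near the origin beyond \eqref{potential:2} is needed. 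Everything else is routine.
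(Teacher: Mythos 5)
Your case $h\ge 1$ reproduces the paper's argument: the right-hand inequality in \eqref{potential:special} gives $\frac{d}{dt}\bigl(t^{-C}V(tx)\bigr)\le 0$ along rays, hence $V(hx)\le h^{C}V(x)$, and the integration step is immediate; your extra care about a.e.\ differentiability of the locally Lipschitz $V$ along rays is fine (and slightly more careful than the paper). The genuine gap is the case $0<h<1$. You claim that ``the same argument on $[h,1]$'', or the monotonicity of $t\mapsto t^{-C}V(tx)$, yields $V(hx)\le h^{-C'}V(x)$, but that monotonicity gives the \emph{reverse} bound: comparing $t=h$ with $t=1$, nonincreasingness gives $h^{-C}V(hx)\ge V(x)$, i.e.\ $V(hx)\ge h^{C}V(x)$, a lower bound rather than the needed upper bound. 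The one-sided inequality $\nabla V(x)\cdot x\le CV(x)$ only limits how fast $V$ can grow outward along rays and says nothing about how fast it may drop inward, so by itself it cannot control $V(hx)$ by $V(x)$ when $h<1$ (one can build radial potentials satisfying \eqref{potential:1}, \eqref{potential:2} and $\nabla V\cdot x\le CV$ for which $\sup_x V(hx)/V(x)=+\infty$ at some fixed $h<1$).

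The missing ingredient is exactly the left-hand inequality $-2V(x)\le\nabla V(x)\cdot x$ in \eqref{potential:special}, which you never use: it gives $\frac{d}{dt}\bigl(t^{2}V(tx)\bigr)=2tV(tx)+t^{2}\,\nabla V(tx)\cdot x\ge 0$, so $t\mapsto t^{2}V(tx)$ is nondecreasing and therefore $V(hx)\le h^{-2}V(x)$ for $0<h<1$; this is precisely how the paper closes that case. Note that this is not a peripheral case: in the application (Lemma \ref{lem:scaling_MP}, via \eqref{func:scaling}) the potential appears as $V(x/h)$ with $h\to+\infty$, i.e.\ the scaling factor tends to $0$, so the $0<h<1$ regime is the one actually needed.
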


\begin{proof}
Recall \eqref{potential:special}, which yields
\begin{equation*}\label{ineq:assum}
-2 V(x) \leq \nabla V(x) \cdot x \leq C V(x).
\end{equation*}
First, let $h \geq 1$. Applying Gr\"onwall's inequality to the map $h \mapsto V(hx)$, we obtain
\[
\frac{\partial}{\partial h} \left( h^{-C} V(hx)\right) =  -C  h^{-C-1} V(hx)+h^{-C} \nabla V(hx) \cdot x\leq 0,
\]
which implies that
\[
V(hx) \leq h^{C} V(x).
\]
Similarly, for $0 < h < 1$, we have
\[
V(hx) \leq h^{-2} V(x),
\]
which completes the proof.
\end{proof}

Moreover, we recall the following concentration-compactness result from 
\cite{MR1632171,Lewinlecture}.
\begin{lemma}\label{Concom}
Let $(u_{n})_{n}$ be a bounded sequence in $D^{1,2}(\mathbb{R}^{N})$, and define
\begin{equation*}
  m(u) = \sup \left\{\int_{\mathbb{R}^{N}} |u|^{2^{*}} : \begin{aligned}
  \text{there exists }(n_k)_k &\subset\N,\  
  (\alpha_k)_k\subset\R,\  
  (x_k)_k\subset\R^N,\text{ with }\\
  &\alpha_{k}^{-N/2^{*}}u_{n_{k}} \left(\frac{\cdot-x_{k}}{\alpha_{k}}\right) \rightharpoonup u
  \end{aligned}\right\}.
\end{equation*}
Then, $m(u)=0$ if and only if $u_{n}\rightarrow0$ strongly in $L^{2^{*}}(\mathbb{R}^{N})$.
\end{lemma}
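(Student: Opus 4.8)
The plan is to prove the nontrivial implication, namely that $m(u)=0$ forces $u_n\to 0$ strongly in $L^{2^*}(\R^N)$; the converse is immediate, since if $u_n\to 0$ in $L^{2^*}$ then every rescaling $\alpha_k^{-N/2^*}u_{n_k}((\cdot-x_k)/\alpha_k)$ also has vanishing $L^{2^*}$-norm by scale invariance of that norm, so any weak limit $u$ satisfies $\int|u|^{2^*}\le\liminf\int|u_{n_k}|^{2^*}=0$ and hence $m(u)=0$. For the forward direction I would argue by contraposition: assume $u_n\not\to 0$ in $L^{2^*}(\R^N)$ and produce a weak limit $u$ (along a rescaled subsequence) with $\int_{\R^N}|u|^{2^*}>0$, which exhibits $m(u)>0$.

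The key tool is the Lions-type concentration-compactness / vanishing lemma in $D^{1,2}(\R^N)$. Since $(u_n)_n$ is bounded in $D^{1,2}$, the Sobolev inequality gives a uniform $L^{2^*}$ bound; by assumption $\limsup_n\|u_n\|_{2^*}>0$, so after passing to a subsequence we may assume $\|u_n\|_{2^*}\ge\delta>0$ for all $n$. The dichotomy is: either "vanishing" holds, meaning
\[
\sup_{y\in\R^N}\int_{B_R(y)}|u_n|^{2^*}\,dx\to 0\quad\text{for every }R>0,
\]
or it does not. In $D^{1,2}$ at the critical exponent the pure vanishing conclusion "$u_n\to0$ in $L^{2^*}$" is false in general (bubbles escape by concentration, not translation), so the correct statement is the scaled one: there exist sequences $\alpha_k>0$ and $x_k\in\R^N$ such that the rescaled functions $v_k:=\alpha_k^{-N/2^*}u_{n_k}((\cdot-x_k)/\alpha_k)$ do not vanish, i.e. $\liminf_k\int_{B_1(0)}|v_k|^{2^*}\,dx>0$. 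One standard way to extract $\alpha_k$ and $x_k$ is via the Lévy concentration function: set $Q_n(R)=\sup_{y}\int_{B_R(y)}|u_n|^{2^*}$, pick $\alpha_n$ so that $Q_n(\alpha_n)$ equals a fixed small fraction of $\|u_n\|_{2^*}^{2^*}$, choose $x_n$ nearly attaining that supremum, and rescale; this normalizes the mass at unit scale away from $0$.

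Having produced $(v_k)_k$ bounded in $D^{1,2}(\R^N)$ (note $\|v_k\|_{D^{1,2}}=\|u_{n_k}\|_{D^{1,2}}$ by scale invariance of the Dirichlet norm under the $N/2^*$-rescaling), I pass to a further subsequence so that $v_k\rightharpoonup u$ weakly in $D^{1,2}(\R^N)$ and (by Rellich on balls) $v_k\to u$ in $L^2_{\mathrm{loc}}$, hence a.e.\ after one more subsequence. From $\int_{B_1(0)}|v_k|^{2^*}\ge c>0$ uniformly, together with the a.e.\ convergence and a local Brezis--Lions / Vitali argument (the sequence $(|v_k|^{2^*})$ restricted to $B_2(0)$ cannot concentrate all its mass on a null set because the $L^{2^*}$-norms on $B_1$ stay bounded below while $v_k$ is bounded in $D^{1,2}(B_2)$, which is compactly embedded in $L^q(B_2)$ for $q<2^*$), one concludes $\int_{\R^N}|u|^{2^*}\ge\int_{B_1(0)}|u|^{2^*}>0$. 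Therefore $u$ is an admissible competitor in the definition of $m$, so $m(u)>0$, contradicting the hypothesis; this proves that $m(u)=0$ implies $u_n\to0$ in $L^{2^*}(\R^N)$.

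The main obstacle is the step that extracts the scaling parameters $\alpha_k$ and translations $x_k$ and shows the rescaled sequence genuinely "sees" a bubble, i.e.\ has $L^{2^*}$-mass bounded below on a fixed ball: one must handle simultaneously the translation and dilation invariances of the problem, and rule out the possibility that all the $L^{2^*}$-mass leaks to spatial infinity or spreads out. This is precisely where the concentration function argument (choosing $\alpha_n$ via $Q_n$) is essential, and where one should cite the formulation in \cite{MR1632171} and \cite{Lewinlecture} rather than reprove it. The remaining passages—weak convergence in $D^{1,2}$, local compactness, a.e.\ convergence, and deducing $\int|u|^{2^*}>0$ from a uniform lower bound of local mass—are routine.
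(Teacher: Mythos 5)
The paper does not actually prove this lemma: it is recalled verbatim from \cite{MR1632171,Lewinlecture} as a known concentration-compactness result, so the only issue is whether your sketch would stand on its own. It would not, because the decisive step is wrong. The easy direction and the reduction to ``produce a nonzero dislocated weak limit'' are fine, but your concluding argument --- that $\int_{B_1(0)}|v_k|^{2^{*}}\ge c>0$, boundedness in $D^{1,2}$, a.e.\ convergence and the compact embedding $D^{1,2}(B_2)\hookrightarrow L^{q}(B_2)$ for $q<2^{*}$ force the weak limit $u$ to satisfy $\int_{B_1}|u|^{2^{*}}>0$ --- is false. Take $v_k(x)=k^{(N-2)/2}\phi(kx)$ with a fixed $\phi\in C^{\infty}_0(B_1)$: then $\|\nabla v_k\|_2$ and $\int_{B_1}|v_k|^{2^{*}}$ are independent of $k$, $v_k\to 0$ in $L^{q}_{\mathrm{loc}}$ for every $q<2^{*}$ and a.e., and yet $v_k\rightharpoonup 0$ in $D^{1,2}$. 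Critical $L^{2^{*}}$ mass can concentrate on a single point; subcritical local compactness and Vitali-type arguments are blind to exactly this phenomenon, which is the whole content of the lemma.

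The L\'evy concentration-function normalization you propose does not repair this. Choosing $\alpha_n$ so that $\sup_{y}\int_{B_{\alpha_n}(y)}|u_n|^{2^{*}}=\varepsilon_0$ only caps the mass carried by unit balls after rescaling; it does not prevent the mass in $B_1(x_k)$ from concentrating at still smaller scales. For instance, two (or more) bubbles at scale $1/k$, each of mass $3\varepsilon_0/4$, placed at mutual distance of order one, make the concentration function reach the level $\varepsilon_0$ first at a radius of order one, and along that specific choice of $(\alpha_k,x_k)$ the rescaled sequence still converges weakly to $0$; a nonzero dislocated limit exists only for a \emph{different} choice of scales, which your procedure does not detect. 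The missing ingredient is precisely what the cited references supply: a refined Sobolev inequality involving a negative-order Besov or Morrey norm (or, equivalently, a dyadic/profile-decomposition argument), which converts $\|u_n\|_{2^{*}}\ge\delta$ and $\|\nabla u_n\|_{2}\le C$ into an explicit scale-and-point pair along which a fixed test function witnesses a nontrivial weak limit. Since you defer exactly this point to \cite{MR1632171,Lewinlecture}, your proposal either contains a genuine gap (if the local Vitali step is meant to close it) or collapses to quoting the result itself --- which is what the paper does, citing the lemma without proof.
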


To proceed with the analysis of the convergence of Palais-Smale sequences, we turn to the following result, which is a variant of \cite[Ch. III,
Thm. 3.1]{Struwebook}.

\begin{lemma}\label{lem:split}
Let $(u_{n},\lambda_{n})_{n}$ be a bounded Palais-Smale sequence for $E_{\mu}$ on $\mathcal{M}$ and assume that
\[
\|u^{-}_{n}\|_{\mathcal{H}} \to 0.
\]
Then, up to subsequences, there exist $u^{0} \in \mathcal{H}$, $u^{0} \geq 0$ and $\lambda^{0} \in \R$ such that $u_{n}\rightharpoonup u^{0}$ weakly in $\mathcal{H}$, $\lambda_{n} \to \lambda^{0}>0$ and
\[
- \Delta u^{0} +V(x) u^{0} = \mu (u^{0})^{2^{*}-1}+ \lambda^{0} u^{0}.
\]
Moreover, only one of the two following holds:
\begin{enumerate}
\item $u_{n} \to u^{0}$ strongly in $\mathcal{H}$,
\item\label{eq:split_case2} $u_{n} \rightharpoonup u^{0}$ in $\mathcal{H}$ (but not strongly), and
\begin{equation}\label{energy:level_split}
E_\mu (u_{n}) \geq E_\mu (u^{0}) + \frac{1}{N}S^{N/2} \mu^{1-\frac{N}{2}}+o(1), \quad \mbox{as} \quad n \to + \infty.
\end{equation}
\end{enumerate}
\end{lemma}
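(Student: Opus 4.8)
The plan is to follow the classical splitting argument of Struwe, adapted to the normalized setting with a potential. First I would use the boundedness of $(u_n,\lambda_n)$ in $\mathcal{H}\times\R$ to extract a subsequence with $u_n\rightharpoonup u^0$ weakly in $\mathcal{H}$ and $\lambda_n\to\lambda^0$; the compact embedding \eqref{embed:trap} gives $u_n\to u^0$ strongly in $L^p(\R^N)$ for $2\le p<2^*$, and in particular $u_n\to u^0$ a.e., so that $u^0\ge 0$ (since $\|u_n^-\|_{\mathcal{H}}\to 0$). Passing to the weak limit in the equation $E_\mu'(u_n)-\lambda_n u_n\to 0$, the standard Brezis--Lieb / weak-continuity arguments show that $u^0$ solves $-\Delta u^0+V(x)u^0=\mu(u^0)^{2^*-1}+\lambda^0 u^0$. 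The sign $\lambda^0>0$ should be recovered exactly as in Lemma~\ref{lem:max_Princ}, using assumption \eqref{potential:special} together with the Pohozaev identity \eqref{eq:Pohozaev}; I would need the Pohozaev identity for the limit $u^0$ and care about whether $\|u^0\|_2=1$ or merely $\le 1$, so actually I expect one must argue the sign via testing against $\varphi_1$ and against $u^0$ itself as in Lemma~\ref{lem:max_Princ}, combined with the limiting behaviour, rather than naively. A cleaner route: since $u_n^-\to 0$, strong $L^p$ convergence forces $\|u^0\|_2=1$, so $u^0\in\mathcal{M}$ and Lemma~\ref{lem:max_Princ} applies directly to give $0\le\lambda^0<\lambda_1$, and strict positivity of $\lambda^0$ will follow if $u^0\neq 0$; the only subtlety is ruling out $u^0\equiv 0$, but $\|u^0\|_2=1$ already does this.

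Next, set $v_n:=u_n-u^0$, so $v_n\rightharpoonup 0$ in $\mathcal{H}$ and $v_n\to 0$ in $L^p$ for $2\le p<2^*$ and a.e. By the Brezis--Lieb lemma, $\|u_n\|_{2^*}^{2^*}=\|u^0\|_{2^*}^{2^*}+\|v_n\|_{2^*}^{2^*}+o(1)$, and similarly the quadratic terms split: $\|\nabla u_n\|_2^2=\|\nabla u^0\|_2^2+\|\nabla v_n\|_2^2+o(1)$ and $\int V u_n^2=\int V (u^0)^2+o(1)$ (the cross term vanishes because $v_n\rightharpoonup 0$ in $D^{1,2}$ and $v_n\to0$ in the weighted $L^2$). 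Hence $E_\mu(u_n)=E_\mu(u^0)+\bigl(\tfrac12\|\nabla v_n\|_2^2-\tfrac{\mu}{2^*}\|v_n\|_{2^*}^{2^*}\bigr)+o(1)$. Testing the Palais--Smale condition against $u_n$ and subtracting the equation satisfied by $u^0$ tested against $u^0$, and using $\lambda_n\int v_n^2\to 0$ (since $v_n\to 0$ in $L^2$), one obtains $\|\nabla v_n\|_2^2+\int V v_n^2 = \mu\|v_n\|_{2^*}^{2^*}+o(1)$; since $\int V v_n^2\to 0$ need not hold a priori — wait, it does not, but $\int V v_n^2\ge 0$, so in any case $\|\nabla v_n\|_2^2 \le \mu\|v_n\|_{2^*}^{2^*}+o(1)$ and, combined with the opposite-type Sobolev inequality $S\|v_n\|_{2^*}^2\le\|\nabla v_n\|_2^2$, one gets a dichotomy: either $\|\nabla v_n\|_2\to 0$ (hence, with $\int V v_n^2\to0$ forced by the identity, $v_n\to 0$ strongly in $\mathcal{H}$, case 1), or $\liminf\|\nabla v_n\|_2^2\ge S^{N/2}\mu^{1-N/2}$.

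In the second alternative, I would compute the energy remainder: from $\|\nabla v_n\|_2^2\approx\mu\|v_n\|_{2^*}^{2^*}$ (up to the nonnegative $\int V v_n^2$ term and $o(1)$),
\[
\tfrac12\|\nabla v_n\|_2^2-\tfrac{\mu}{2^*}\|v_n\|_{2^*}^{2^*}
\ge\Bigl(\tfrac12-\tfrac1{2^*}\Bigr)\|\nabla v_n\|_2^2+o(1)
=\tfrac1N\|\nabla v_n\|_2^2+o(1)\ge\tfrac1N S^{N/2}\mu^{1-N/2}+o(1),
\]
which gives \eqref{energy:level_split}. The main obstacle I anticipate is making the ``splitting'' of the gradient and the Palais--Smale identity fully rigorous in the normalized/constrained setting: the free differential $E_\mu'(u_n)$ must be replaced by the constrained differential, producing the Lagrange term $\lambda_n u_n$, and one must verify that $\lambda_n$ is bounded (which follows from boundedness of $(u_n)$ in $\mathcal{H}$ by pairing the equation with $u_n$) and that the cross terms between $u^0$ and $v_n$ in both the energy and the derivative identities genuinely vanish — this relies on weak convergence in $D^{1,2}$ for the gradient part and on the compact embedding \eqref{embed:trap} for the potential and $L^2$ parts. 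A secondary technical point is handling the $\int V v_n^2$ term, which is not obviously $o(1)$ unless one first deduces $v_n\to 0$ in $L^2_{\mathrm{loc}}$ with a weight; since $V\ge 0$ this term only helps in the inequalities, and its vanishing in case 1 follows a posteriori from the subtracted identity, so it causes no real difficulty. The bubbling constant $\frac1N S^{N/2}\mu^{1-N/2}$ is exactly the energy of an Aubin--Talenti bubble for the $\mu$-weighted critical problem, which is why it appears as the universal energy gap.
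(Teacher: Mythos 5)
Your argument is correct in substance, but the key quantization step follows a genuinely different route from the paper's. The common part (weak limit, compact embedding \eqref{embed:trap}, Brezis--Lieb splitting of the gradient, potential and critical terms, and the identity $\|v_n\|_{\mathcal{H}}^2=\mu\|v_n\|_{2^{*}}^{2^{*}}+o(1)$ obtained by subtracting the limit equation from the Palais--Smale relation) is the same. From there, you get the lower bound $\liminf_n\|\nabla v_n\|_2^2\ge S^{N/2}\mu^{1-\frac N2}$ in the non-convergent case directly from the Sobolev inequality: if $\|v_n\|_{\mathcal{H}}\not\to 0$, the identity forces $\|\nabla v_n\|_2^{2^{*}-2}\ge S^{2^{*}/2}/\mu+o(1)$. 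The paper instead invokes the concentration--compactness/dilation result (Lemma \ref{Concom}) to extract a nontrivial rescaled profile $u^1$, excludes bounded dilation parameters via the compact embedding, and obtains the same bound from the limit equation $-\Delta u^1=\mu|u^1|^{2^{*}-2}u^1$. Your route is shorter and more elementary; the paper's buys the bubble structure (scales, translations, a limiting Aubin--Talenti-type solution), which is not needed for this statement. Two small points. First, your displayed case-2 inequality $\frac12\|\nabla v_n\|_2^2-\frac{\mu}{2^{*}}\|v_n\|_{2^{*}}^{2^{*}}\ge\frac1N\|\nabla v_n\|_2^2+o(1)$ is not correct as written, since the identity gives $\mu\|v_n\|_{2^{*}}^{2^{*}}=\|\nabla v_n\|_2^2+\int_{\R^N} V v_n^2\,dx+o(1)$ with a potential term that need not vanish; you must keep the $\frac12\int_{\R^N} V v_n^2\,dx$ contribution of $E_\mu(v_n)$, which yields $E_\mu(v_n)=\frac1N\bigl(\|\nabla v_n\|_2^2+\int_{\R^N} V v_n^2\,dx\bigr)+o(1)\ge\frac1N\|\nabla v_n\|_2^2+o(1)$ --- you essentially acknowledge this, and the paper exploits $V\ge0$ in the same way. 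Second, your appeal to Lemma \ref{lem:max_Princ} only gives $\lambda^0\ge0$, not the strict positivity claimed in the statement; the paper's own proof does not address this either, so it is a shared imprecision rather than a gap specific to your argument.
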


\begin{proof}
Since $(u_{n})_{n}$ is bounded in $\mathcal{H}$ and $\|u_{n}^{-}\|_{\mathcal{H}} \to 0$, there exist $u^{0} \geq 0 $ such that $u_{n} \rightharpoonup 0$ weakly in $\mathcal{H}$. As recalled in \eqref{embed:trap}, due to the compact embedding, we have
\begin{equation*}\label{eq:L2embed}
u_{n} \to u^{0} \quad \mbox{in} \quad L^{2}(\R^{N}).
\end{equation*}
Setting $u_{1,n}=u_{n}-u^{0}$, we have
\[
u_{1,n} \rightharpoonup 0 \quad \mbox{ weakly   in } \mathcal{H} \qquad \mbox{and} \quad \mbox{ strongly in } L^{p}(\R^{N}) \quad p \in [2,2^{*}).
\]
By the Brezis-Lieb lemma, we have
\begin{equation*}
\begin{aligned}
\|\nabla u_{n}\|_{2}^{2}=\|\nabla u^{0}\|_{2}^{2}+\|\nabla u_{1,n}\|_{2}^{2}+o(1), \quad  \| u_{n}\|_{2^{*}}^{2^{*}}=\| u^{0}\|_{2^{*}}^{2^{*}}+\| u_{1,n}\|_{2^{*}}^{2^{*}}+o(1),
\end{aligned}
\end{equation*}
and
\[
\int_{\R^{N}} V(x)u_{n}^{2} dx=\int_{\R^{N}} V(x)(u^{0})^{2} dx+\int_{\R^{N}} V(x)u_{1,n}^{2} dx+o(1),
\]
which yield that
\begin{equation}\label{equation:function_split}
E_{\mu}(u_{n})= E_{\mu}(u^{0})+E_{\mu}(u_{1,n})+o(1).
\end{equation}
Now, by the defination of the energy space $\mathcal{H}$ and the norm (see \eqref{space:main} and \eqref{space:norm}), $(u_{1,n})_{n}$ satisfies
\begin{equation}\label{eq:u12eq}
\|u_{1,n}\|_{\mathcal{H}}^{2}=\mu \|u_{1,n}\|^{2^{*}}_{2^{*}}+o(1).
\end{equation}
Thus, if $u_{1,n} \to 0$ in $\mathcal{H}$, we are in case $1.$ If not, we we are left to show that case $2$ holds. We assume that there exists a suitable positive constant $\delta$ such that
$\|u_{1,n}\|_{\mathcal{H}} \geq \delta $. By \eqref{eq:u12eq}, we obtain that there exists a constant $\delta_1>0$ such that
\[
 \|u_{1,n}\|_{2^{*}}\geq \delta_1>0.
\]
By Lemma \ref{Concom} there exist sequences $(\alpha_{n})_n$, $(x_{n})_n$, and a non-trivial $u^{1}\in \mathcal{H}$ such that
\begin{equation*}\label{seq_dilation}
  \tilde{u}_{1,n}=\alpha_{n}^{-\frac{N-2}{2}}u_{1,n}\left(\frac{\cdot-x_{n}}{\alpha_{n}}\right)\rightharpoonup u^{1}
  \qquad\text{weakly in }\mathcal{H}.
\end{equation*}
Without loss of generality, after translating we may assume $x_n=0$; moreover, direct computations show that
\begin{equation}\label{norm_dilation}
  \|\nabla \tilde{u}_{1,n}\|^{2}_{2}= \|\nabla u_{1,n}\|^{2}_{2}, \ \ \ \| \tilde{u}_{1,n}\|^{2^{*}}_{2^{*}}= \| u_{1,n}\|^{2^{*}}_{2^{*}}, \ \ \ \mbox{and } \ \| \tilde{u}_{1,n}\|^{2}_{2}= \alpha_{n}^{2}\| u_{1,n}\|^{2}_{2}.
\end{equation}
Therefore, up to subsequences, we have two different cases. 

Case 1: $\alpha_n \not \to \infty$. From $u_{1,n} \in \mathcal{H}$, \eqref{embed:trap} and \eqref{norm_dilation}, we have
\[
\| \tilde{u}_{1,n}\|^{2}_{2}= \alpha_{n}^{2}\| u_{1,n}\|^{2}_{2} \to 0, \quad \mbox{as } n \to +\infty,
\]
which implies that $\tilde{u}_{1,n} \to 0$ strongly in $L^2(\R^N)$. Thus, we can deduce that $u^1=0$ a.e. in $\R^N$, a contradiction with the fact that $u^1$ is nontrivial.

Case 2: $\alpha_n \to \infty$. Without loss of generality, we may assume that $\alpha_n > 1$ for large $n$. For every $\psi (x)\in C^{\infty}_{0}(\R^N)$, we define 
\[
\varphi_n(y) = \psi(\alpha_n y).
\]
Hence, $\varphi_n(x) \in C^{\infty}_0(\mathbb{R}^N)$ and
\[
  \|\varphi_{n}\|_{2}^{2}= \alpha_{n}^{-N}\|\psi\|^{2}_{2}, \ \ \ \|\nabla \varphi_{n}\|^{2}_{2}=\alpha_{n}^{2-N}\|\nabla\psi\|^{2}_{2}.
\]
By \eqref{eq:u12eq} and $\varphi_n (x)\in C^{\infty}_{0}(\R^N)$, we have, as $n\rightarrow +\infty$,
\begin{equation*}
  \int_{\mathbb{R}^{N}}\nabla u_{1,n}\nabla \varphi_n \, dx - \mu \int_{\mathbb{R}^{N}} |u_{1,n}|^{2^{*}-2}u_{1,n}\varphi_n\, dx = \int_{\mathbb{R}^{N}} \left( \lambda^{0}-V(x)\right)u_{1,n}\varphi_n \, dx+o\left(\|\varphi_n \|_{H^{1}}\right).
\end{equation*}
From this, we obtain that
\begin{equation}\label{eq:tilde_psi_varphi}
\begin{aligned}
&\int_{\mathbb{R}^{N}}\nabla \tilde{u}_{1,n}(x) \nabla \psi(x)dx -\mu  \int_{\mathbb{R}^{N}}|\tilde{u}_{1,n}(x)|^{2^{*}-2}\tilde{u}_{1,n}(x) \psi(x)dx \\
=&\alpha_{n}^{\frac{N}{2}-1}\left[\int_{\mathbb{R}^{N}}\nabla u_{1,n}\nabla \varphi_n\, dx- \mu \int_{\mathbb{R}^{N}} |u_{1,n}|^{2^{*}-2}u_{1,n} \varphi_n\, dx\right]\\
=&\alpha_{n}^{\frac{N}{2}-1}\int_{\mathbb{R}^{N}} \left( \lambda^{0}-V(x)\right)u_{1,n}\varphi_n \, dx+o\left(\|\varphi_n\|_{H^{1}}\right).
\end{aligned}
\end{equation}
Since $V \in C^{0,1}_{loc}(\R^N)$, $\psi(x) \in C_0^\infty(\mathbb{R}^N)$, and $(u_{1,n})_n$ is a bounded sequence in $\mathcal{H}$, it follows from the Hölder inequality that there exists a constant $C_1>0$ such that
\begin{equation*}
\begin{aligned}
\int_{\R^N} V(x) u_{1,n}(x) \varphi_{n}(x) dx &\leq \left(\int_{\R^N} V(x)u^{2}_{1,n}(x)dx\right)^{\frac{1}{2}}\left(\int_{\R^N} V(x)\varphi^{2}_{n}(x)dx\right)^{\frac{1}{2}}\\
&= \alpha_n^{-\frac{N}{2}} \left(\int_{\R^N} V(x)u^{2}_{1,n}(x)dx\right)^{\frac{1}{2}}\left(\int_{\R^N} V\left(\frac{x}{\alpha_n}\right)\psi^{2}(x)dx\right)^{\frac{1}{2}}\\
&\leq  \alpha_n^{-\frac{N}{2}}  C_1.
\end{aligned}
\end{equation*}
Therefore, by \eqref{eq:tilde_psi_varphi}, we have
\begin{equation*}
\begin{aligned}
&\left|  \int_{\mathbb{R}^{N}} \nabla \tilde{u}_{1,n}\nabla \psi \,dx -\mu  \int_{\mathbb{R}^{N}}|\tilde{u}_{1,n}|^{2^{*}-2}\tilde{u}_{1,n} \psi \, dx \right| \\
&\leq \alpha_{n}^{\frac{N}{2}-1}\left[ \lambda^0 \|u_{1,n}\|_{2} \|\varphi_n\|_2  +  \left(\int_{\R^N} V(x)u^{2}_{1,n}(x)dx\right)^{\frac{1}{2}}\left(\int_{\R^N}V(x)\varphi^{2}_{n}(x)dx\right)^{\frac{1}{2}} +o(\|\varphi_{n}\|_{H^{1}})\right]\\
&\leq C \lambda^{0} \alpha_{n}^{-1} + C_1 \alpha_{n}^{-1} +o\left(\|\nabla \psi \|_2 + \alpha_n^{-1} \|\psi\|_2 \right) \to 0,
\end{aligned}
\end{equation*}
as $\alpha_n \to + \infty$. Thus, we obtain that $\tilde{u}_{1,n}$ satisfies
\begin{equation}\label{eq_split}
  \int_{\mathbb{R}^{N}}|\nabla \tilde{u}_{1,n}|^{2}\,dx -\mu  \int_{\mathbb{R}^{N}}|\tilde{u}_{1,n}|^{2^{*}}\,dx=o(1),\qquad
  \text{ and }-\Delta u^1=\mu |u^1|^{2^{*}-2}u^1 .
\end{equation}
Hence, combining \eqref{eq_split} and the Sobolev inequality, we have
\begin{equation*}
\begin{aligned}
  \frac12 \|\nabla \tilde{u}_{1,n}\|_{2}^{2} -\frac{\mu}{2^*}\|\tilde{u}_{1,n}\|_{2^{*}}^{2^{*}}&=\left(\frac{1}{2}-\frac{1}{2^{*}}\right)\|\nabla \tilde{u}_{1,n}\|_{2}^{2}+o(1)\\
 & =\frac{1}{N}\|\nabla \tilde{u}_{1,n}\|_{2}^{2}+o(1) \geq\frac{1}{N}\|\nabla u^{1}\|_{2}^{2}+o(1) \geq \frac{1}{N}S^{\frac{N}{2}}\mu^{1-\frac{N}{2}}+o(1).
  \end{aligned}
\end{equation*}
Then, by $V \geq 0$, we have
\begin{equation*}
 E_{\mu}(u_{1,n})=  \frac12 \|\nabla \tilde{u}_{1,n}\|_{2}^{2} -\frac{\mu}{2^*}\|\tilde{u}_{1,n}\|_{2^{*}}^{2^{*}}+\frac{1}{2}\int V(x) u^2_{1,n} dx \geq \frac{1}{N}S^{\frac{N}{2}}\mu^{1-\frac{N}{2}}+o(1),
\end{equation*}
which, together with \eqref{equation:function_split}, yields \eqref{energy:level_split}.
\end{proof}

\section{Mountain pass geometry}\label{sec:MP_geo}
In order to investigate the geometric structure of $E_{\mu}$, we define the sets
\begin{equation}\label{space:geometry}
\mathcal{B}_{\alpha}=\left\{ u \in \mathcal{M}: \|u\|^{2}_{\mathcal{H}} <\alpha \right\} \quad \mbox{and} \quad \mathcal{U}_{\alpha}=\left\{ u \in \mathcal{M}: \alpha- \eps \leq \|u\|^{2}_{\mathcal{H}} \leq \alpha \right\},
\end{equation}
where $\eps>0$ is a sufficiently small fixed constant. Recall \eqref{eq:eigen}, we have the first eigenvalue $\lambda_{1}>0$ and the first eigenfunction $\varphi_{1}>0$, $\|\varphi_{1}\|^{2}_{2}=1$ with $\varphi_{1} \in \mathcal{M}$, $\varphi_{1} \in C^{2,\beta}(\R^{N})$ for every $0<\beta<1$ of the problem
\begin{equation}\label{eq:eigen1}
-\Delta \varphi_{1}+V(x) \varphi_{1}= \lambda_{1} \varphi_{1},
\end{equation}
which implies that $\mathcal{B}_{\alpha}$ is non-empty if and only if $\alpha \geq \lambda_{1}+1$.

Then, we find the $E_{\mu}$ possesses the following mountain pass structure.

\begin{lemma}\label{lem:geometry}
Assume that
\begin{equation}\label{eq:muless}
\mu<\left(  \frac{S^{N/2}}{N(\lambda_{1}+1)}  \right)^{\frac{2}{N-2}}
\end{equation}
Let us define
\begin{equation}\label{eq:baralpha}
\bar{\alpha}= S^{N/2} \mu^{1-\frac{N}{2}}+1.
\end{equation}
Then
\begin{equation*}\label{eq:geometry}
0< \inf_{\mathcal{B}_{\bar{\alpha}}}E_{\mu}\leq E(\varphi_{1})< E^{*},
\end{equation*}
where
\[
E^{*}= \min \left \{ E(u): u \in \mathcal{U}_{\bar{\alpha}} \right\}.
\]

\end{lemma}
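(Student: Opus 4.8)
The plan is to reduce the statement to a one–variable analysis. For $u\in\mathcal{M}$ put
\[
t=t(u):=\|u\|_{\mathcal{H}}^{2}-1=\|\nabla u\|_{2}^{2}+\int_{\R^{N}}V(x)u^{2}\,dx\ \ge 0 ,
\]
so that $E_{\mu}(u)=\tfrac12 t-\tfrac{\mu}{2^{*}}\|u\|_{2^{*}}^{2^{*}}$. Since $V\ge0$ and $\|u\|_{2}=1$, the Sobolev inequality gives $\|u\|_{2^{*}}^{2^{*}}\le S^{-N/(N-2)}\|\nabla u\|_{2}^{2^{*}}\le S^{-N/(N-2)}t^{N/(N-2)}$, whence
\[
E_{\mu}(u)\ \ge\ g(t(u)),\qquad g(s):=\tfrac12 s-\tfrac{\mu}{2^{*}}S^{-N/(N-2)}s^{N/(N-2)}.
\]
A direct differentiation shows that $g(0)=0$, that $g$ is strictly increasing on $(0,t^{*}]$ and strictly decreasing afterwards, where $t^{*}:=\bar\alpha-1=S^{N/2}\mu^{1-N/2}$ is precisely the unique critical point of $g$; a short computation then gives the peak value $g(t^{*})=\tfrac1N S^{N/2}\mu^{1-N/2}=t^{*}/N$, and in particular $g>0$ on $(0,t^{*}]$.

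Next I would record two elementary facts. First, raising both sides of \eqref{eq:muless} to the power $(N-2)/2$ shows that \eqref{eq:muless} is exactly equivalent to $\lambda_{1}+1<t^{*}/N=g(t^{*})$. Second, for every $u\in\mathcal{M}$ the variational characterisation of the first eigenvalue of \eqref{eq:eigen1} yields $\|\nabla u\|_{2}^{2}+\int V u^{2}\,dx\ge\lambda_{1}\|u\|_{2}^{2}=\lambda_{1}$, i.e. $t(u)\ge\lambda_{1}>0$, with equality only at $u=\pm\varphi_{1}$; in particular $\|\varphi_{1}\|_{\mathcal{H}}^{2}=\lambda_{1}+1$. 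Combining these, $\lambda_{1}<g(t^{*})=t^{*}/N\le t^{*}$, so $\|\varphi_{1}\|_{\mathcal{H}}^{2}=\lambda_{1}+1<t^{*}+1=\bar\alpha$; hence $\varphi_{1}\in\mathcal{B}_{\bar\alpha}$ (so $\mathcal{B}_{\bar\alpha}\neq\emptyset$), and therefore $\inf_{\mathcal{B}_{\bar\alpha}}E_{\mu}\le E_{\mu}(\varphi_{1})$.

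For the remaining two inequalities: if $u\in\mathcal{B}_{\bar\alpha}$ then $t(u)\in[\lambda_{1},t^{*})$, so by the lower bound above and the monotonicity of $g$, $E_{\mu}(u)\ge g(t(u))\ge g(\lambda_{1})>0$, which gives $\inf_{\mathcal{B}_{\bar\alpha}}E_{\mu}\ge g(\lambda_{1})>0$. For $E_{\mu}(\varphi_{1})<E^{*}$, testing the eigenfunction equation with $\varphi_{1}$ gives $E_{\mu}(\varphi_{1})=\tfrac{\lambda_{1}}{2}-\tfrac{\mu}{2^{*}}\|\varphi_{1}\|_{2^{*}}^{2^{*}}\le\tfrac{\lambda_{1}}{2}$; on the other hand every $u\in\mathcal{U}_{\bar\alpha}$ has $t(u)\in[t^{*}-\eps,t^{*}]$, and since $\eps$ is a fixed small constant we may assume $0<\eps<t^{*}$ and, using continuity of $g$ at $t^{*}$ together with $g(t^{*})>\lambda_{1}+1$, that $g(t^{*}-\eps)>\lambda_{1}+1$; then $g$ being increasing on $[t^{*}-\eps,t^{*}]$ gives $E_{\mu}(u)\ge g(t(u))\ge g(t^{*}-\eps)$, so $E^{*}\ge g(t^{*}-\eps)>\lambda_{1}+1>\tfrac{\lambda_{1}}{2}\ge E_{\mu}(\varphi_{1})$. (Non-emptiness of $\mathcal{U}_{\bar\alpha}$, and the fact that $E^{*}$ is a genuine minimum rather than merely an infimum, follow from connectedness of $\mathcal{M}$ and a routine compactness argument on the bounded shell $\mathcal{U}_{\bar\alpha}$; these play no role in the inequalities.)

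All the computations are elementary; the points that require care are: (i) obtaining the \emph{strict} inequality $\inf_{\mathcal{B}_{\bar\alpha}}E_{\mu}>0$ rather than $\ge0$, for which it is essential to use the eigenvalue lower bound $t(u)\ge\lambda_{1}$, since the naive $t(u)>0$ only gives $\ge0$; and (ii) the constant book-keeping that makes the apparently ad hoc factor $N(\lambda_{1}+1)$ in \eqref{eq:muless} be exactly the condition placing $E_{\mu}(\varphi_{1})\le\lambda_{1}/2$ strictly below the barrier peak $g(t^{*})=t^{*}/N$.
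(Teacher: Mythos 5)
Your proposal is correct and follows essentially the same route as the paper: bound $E_{\mu}(u)$ from below by the one-variable function $g\bigl(\|u\|_{\mathcal{H}}^{2}-1\bigr)$ via the Sobolev inequality, note that $\bar\alpha-1$ is the peak of $g$ with $g(\bar\alpha-1)=\tfrac1N S^{N/2}\mu^{1-N/2}$, use the eigenvalue equation to get $E_{\mu}(\varphi_{1})\le\tfrac{\lambda_{1}}{2}$, and use \eqref{eq:muless} to separate the levels on the shell $\mathcal{U}_{\bar\alpha}$. In fact your use of the eigenvalue lower bound $\|u\|_{\mathcal{H}}^{2}-1\ge\lambda_{1}$ to obtain the strict inequality $\inf_{\mathcal{B}_{\bar\alpha}}E_{\mu}\ge g(\lambda_{1})>0$ supplies a detail the paper only asserts (just note that, like the paper, you should read $E^{*}$ as an infimum, since attainment on the shell is not actually needed and not as routine as your aside suggests).
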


\begin{proof}
From \eqref{eq:eigen1}, we have
\begin{equation}\label{eq:varphi_lambda}
E_{\mu}(\varphi_{1})= \frac{1}{2}\left(\|\varphi_{1}\|^{2}_{\mathcal{H}} -1\right)-\frac{\mu}{2^{*}}\|\varphi_{1}\|^{2^{*}}_{2^{*}}= \frac{1}{2} \lambda_{1}  -\frac{\mu}{2^{*}}\|\varphi_{1}\|^{2^{*}}_{2^{*}} \leq \frac{1}{2} \lambda_{1}.
\end{equation}
Then, combining \eqref{eq:muless}, \eqref{eq:baralpha} and \eqref{eq:varphi_lambda}, we have
\begin{equation*}
\begin{aligned}
E^{*} \geq E_{\mu}(u) &\geq \frac{1}{2} \left(\|u\|^{2}_{\mathcal{H}}-1\right) - \frac{\mu}{2^{*}} \|u\|^{2^{*}}_{2^{*}}\\
& \geq \frac{1}{2} \left(\|u\|^{2}_{\mathcal{H}}-1\right) -\frac{\mu}{2^{*}S^{2^{*}/2}} \left(\|u\|^{2}_{\mathcal{H}}-1 \right)^{\frac{2^{*}}{2}}\\
& \geq \left(\frac{1}{N}-\frac{\eps}{2}\right)S^{N/2} \mu^{1-\frac{N}{2}} -\frac{1}{2}> \frac{1}{2} \lambda_{1},
\end{aligned}
\end{equation*}
which yields that
\[
E^{*}> \frac{1}{2} \lambda_{1} \geq E(\varphi_{1}) \geq  \inf_{\mathcal{B}_{\bar{\alpha}}}E_{\mu} >0. \qedhere
\]
\end{proof}

Motivated by the previous lemma, we introduce two energy levels, depending on $\mu$: one associated with a local minimizer, and the other with a mountain pass. Both levels are regarded as candidate critical points for $E_\mu$ (as defined in \eqref{func:main1}), whose actual criticality will be discussed in the following sections.

First, we introduce the energy level associated with a candidate local minimum as
\begin{equation}\label{definition:local_minimizer}
m_{\mu}= \inf_{\mathcal{B}_{\bar{\alpha}}} E_{\mu}.
\end{equation}
By Lemma \ref{lem:geometry}, we have
\begin{equation}\label{LM_bal}
m_{\mu} < \frac{1}{N} S^{N/2} \mu^{1-\frac{N}{2}}.
\end{equation}
On the other hand, after taking into account the candidate local minimum, we turn to a second candidate critical value of mountain pass type. By Lemma \ref{lem:geometry}, any $w \in \mathcal{U}_{\alpha}$ satisfies
\begin{equation}\label{MPG_bal}
 E_{\mu}(w)>\frac{1}{N} S^{N/2} \mu^{1-\frac{N}{2}}
\end{equation}
which provides the necessary energy separation from the local minimum, ensuring that any continuous path starting in the interior of the ball $\mathcal{B}_\alpha$ and leaving its closure (i.e., entering the exterior region $\mathcal{H} \setminus \mathcal{B}_\alpha$) must pass through a point on the sphere $\mathcal{U}_\alpha$ with energy strictly exceeding $\frac{1}{N} S^{N/2} \mu^{1-\frac{N}{2}}$. Consequently, this condition provides the mountain pass geometry required to define the corresponding critical value.

To this end, we need to select two functions $w_1, w_2$ in $\mathcal{M}$, serving as endpoints for the mountain pass, satisfying the following properties:

\begin{equation}\label{MPG_in_bal}
\|w_1\|_{\mathcal{H}}^{2}<\alpha, \ \ \qquad \ \ E_{\mu}(w_{1})<\frac{1}{N} S^{N/2} \mu^{1-\frac{N}{2}},
\end{equation}
and
\begin{equation}\label{MPG_out_bal}
\|w_2\|_{\mathcal{H}}^{2}>\alpha, \ \ \qquad \ \ E_{\mu}(w_{2})\leq 0.
\end{equation}

Under the assumption \eqref{eq:muless}, it follows from Lemma \ref{lem:geometry} that we may choose $w_1=\varphi_1$, and consequently, we have
\begin{equation}\label{MPG_in_bal_1}
\|\varphi_1\|_{\mathcal{H}}^{2}= \lambda_1<\alpha, \ \ \qquad \ \ E_{\mu}(\varphi_{1})<\frac{1}{N} S^{N/2} \mu^{1-\frac{N}{2}}.
\end{equation}
where $\lambda_1$ and $\varphi_1$ defined in \eqref{eq:eigen1}. Hence, \eqref{MPG_in_bal} is established.

On the other hand, condition \eqref{MPG_out_bal} can be easily realized. For every $u \in \mathcal{M}$ and $h >0$, we define the scaling
\begin{equation}\label{eq:scaling}
u_h(x) := h \star u(x)=h^{\frac{N}{2}} u(h x),
\end{equation}
so that by Lemma \ref{lem:pot_scal}, the functional
\begin{equation}\label{func:scaling}
E_{\mu}(u_{h})=\frac{h^{2}}{2}\int_{\mathbb{R}^{N}}|\nabla u|^{2}dx + \frac{1}{2}\int_{\mathbb{R}^{N}}V\left(\frac{x}{h}\right) u^{2}dx-\frac{\mu h^{2^{*}}}{2^{*}}\int_{\mathbb{R}^{N}}|u|^{2^{*}}dx
\end{equation}
is well defined. Then, the following lemma ensures that the scaled function remains in the constraint $\mathcal{M}$ and can serve as the right endpoint of the mountain pass (see \eqref{MPG_out_bal}). 

\begin{lemma}\label{lem:scaling_MP}
Let $u \in \mathcal{M}$ be fixed. For every $h >0$, we have $u_h \in \mathcal{M}$ and
\begin{equation*}
\|\nabla u_h\|_2 \to + \infty \qquad \mbox{and } \qquad E_{\mu}(u_{h}) \to -\infty \qquad \mbox{as } \quad h \to +\infty.
\end{equation*}
\end{lemma}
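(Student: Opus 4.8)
The plan is to use the explicit formula \eqref{func:scaling} for $E_\mu(u_h)$ and estimate each of the three terms as $h\to+\infty$. The gradient term grows like $h^2$ and the nonlinear term, which carries the minus sign, grows like $h^{2^*}$ with $2^*>2$; so the only thing that could prevent $E_\mu(u_h)\to-\infty$ is the potential term $\frac12\int_{\R^N}V(x/h)u^2\,dx$, which need not be bounded since $V$ is unbounded. This is exactly where Lemma \ref{lem:pot_scal} comes in: I would apply it with the parameter $1/h$ (using the case $0<1/h<1$, valid for $h>1$) to bound $\int_{\R^N}V(x/h)u^2\,dx\le C(1/h)\int_{\R^N}V(x)u^2\,dx$. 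From the proof of Lemma \ref{lem:pot_scal} one actually has the explicit bound $V(x/h)\le h^2V(x)$ for $h\ge1$, so the potential term is at most $\frac{h^2}{2}\int_{\R^N}V(x)u^2\,dx$, i.e. it grows no faster than $h^2$.

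Carrying this out: fix $u\in\Mcal$, so $\|u\|_2=1$ and $u_h\in\Mcal$ for all $h>0$ because $\|u_h\|_2^2=\int h^N u(hx)^2\,dx=\|u\|_2^2=1$ by the change of variables $y=hx$; this gives the first assertion. For the gradient norm, $\|\nabla u_h\|_2^2=h^2\|\nabla u\|_2^2\to+\infty$. For the energy, combine the three estimates: for $h\ge1$,
\begin{equation*}
E_\mu(u_h)\le \frac{h^2}{2}\left(\|\nabla u\|_2^2+\int_{\R^N}V(x)u^2\,dx\right)-\frac{\mu h^{2^*}}{2^*}\|u\|_{2^*}^{2^*}.
\end{equation*}
Since $u\in\Mcal\subset\Hcal$ is fixed and nontrivial, the two constants $A:=\|\nabla u\|_2^2+\int V u^2\,dx$ and $B:=\frac{\mu}{2^*}\|u\|_{2^*}^{2^*}$ are finite and $B>0$ (note $u\not\equiv0$ since $\|u\|_2=1$, so $\|u\|_{2^*}>0$). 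Hence $E_\mu(u_h)\le \frac{A}{2}h^2-Bh^{2^*}$, and because $2^*>2$ the right-hand side tends to $-\infty$ as $h\to+\infty$.

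The only mildly delicate point — and the one I'd be careful to get right — is the direction of the scaling in Lemma \ref{lem:pot_scal}: the formula \eqref{func:scaling} contains $V(x/h)$, whereas Lemma \ref{lem:pot_scal} is stated for $V(hx)$, so one must substitute $h\mapsto 1/h$, which puts us in the regime $0<1/h<1$ and yields the bound with $(1/h)^{-2}=h^2$. Once that substitution is made everything is a routine comparison of powers of $h$, and assumption \eqref{potential:special} enters only through Lemma \ref{lem:pot_scal}, so no further hypotheses are needed here.
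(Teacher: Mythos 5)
Your proof is correct and takes essentially the same route as the paper: both rest on the scaled energy formula \eqref{func:scaling} together with Lemma \ref{lem:pot_scal} (i.e.\ the bound $V(x/h)\le h^{2}V(x)$ for $h\ge 1$ coming from the left inequality in \eqref{potential:special}), so the potential term grows at most like $h^{2}$ and is dominated by the $-h^{2^{*}}$ term; you merely spell out the power comparison that the paper leaves implicit. The only small point to add is that $u_h\in\mathcal{H}$ (hence $u_h\in\mathcal{M}$) must hold for \emph{every} $h>0$, not only $h>1$, which follows from the same Lemma \ref{lem:pot_scal} applied with parameter $1/h$, exactly as in the paper's proof.
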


\begin{proof}
It follows from \eqref{eq:scaling} that $\|u_h\|_2^2=\|u\|_2^2$. Then, by $u \in \mathcal{H}$ and Lemma \ref{lem:pot_scal}, there exist constants $C(h)>0$ such that, for every $h > 0$, we have
\[
\int_{\R^N}V(hx) u^2 dx < C(h) \int_{\R^N}V(x) u^2 dx<\infty ,
\]
which yields that $u_h \in \mathcal{M}$.

Moreover, from \eqref{func:scaling}, we have
\begin{equation*}
E_{\mu}(u)=\frac{h^{2}}{2}\int_{\mathbb{R}^{N}}|\nabla u|^{2}dx + \frac{1}{2}\int_{\mathbb{R}^{N}}V\left(\frac{x}{h}\right) u^{2}dx-\frac{\mu h^{2^{*}}}{2^{*}}\int_{\mathbb{R}^{N}}|u|^{2^{*}}dx\to -\infty,
\end{equation*}
as $h \to + \infty$. This concludes the proof.
\end{proof}

Now, let $w_{0}=\varphi_1$ be chosen as the left endpoint. Starting from this point, the scaling \eqref{eq:scaling} together with Lemma \ref{lem:scaling_MP} allows us to find $h_1>0$ such that
\begin{equation}\label{eq:left_right}
\|w_{h_{1}}\|_{\mathcal{H}}^{2}>\alpha, \ \ \qquad \ \ E_{\mu}(w_{h_{1}})\leq 0,
\end{equation}
which satisfies \eqref{MPG_out_bal}. Moreover, our path passes through the set $\mathcal{U}_{\alpha}$(see the definition in \eqref{space:geometry}), thus satisfying \eqref{MPG_bal}. Then, we define in a standard way the mountain pass value
\begin{equation}\label{level:mp}
c_{\mu}:=\inf\limits_{\gamma\in\Gamma} \sup\limits_{t\in[0,1]}E_{\mu}(\gamma(t)),
\qquad\text{where }
\Gamma:=\{\gamma\in C([0,1], \mathcal{M}) : \gamma(0)=w_{0}, \gamma(1)=w{_{h_{1}}}\}.
\end{equation}

From \eqref{MPG_bal}, we immediately deduce that any path in $\Gamma$ must pass through the set $\mathcal{U}_{\alpha}$, and, moreover, by \eqref{LM_bal} and \eqref{MPG_bal}, we obtain
\begin{equation*}
 m_{\mu} < \frac{1}{N} S^{N/2} \mu^{1-\frac{N}{2}} < c_\mu,
\end{equation*}
which yields, in particular, that $c_\mu \neq m_\mu$; hence, if we demonstrate that both are indeed critical levels, problem \eqref{eq:main} admits two distinct solutions.

\section{Ground state solution (proof of Theorem \ref{thm:GS})}\label{sec:GS}

This section is devoted to the proof of Theorem \ref{thm:GS}. Throughout the section, we assume that assumptions \eqref{potential:1}-\eqref{potential:special} are satisfied. We first aim to prove the existence of the local minimizer. Under Lemma \ref{lem:loc_GS}, we then show that the local minimizer is the ground state, in the sense of Definition \ref{def:GS}.

First, Lemma \ref{lem:geometry} ensures that $m_\mu$ can be regarded as a candidate local minimum (see \eqref{definition:local_minimizer}). The following lemma provides an estimate for the energy level $m_{\mu}$.
\begin{lemma}\label{energylevel_localmin}
Under the assumptions of Lemma \ref{lem:geometry}, we have
\begin{equation}\label{eq:estimate_local}
\frac{1}{N} \lambda_1 \leq m_{\mu} < \frac{1}{N} S^{N/2} \mu^{1-\frac{N}{2}}.
\end{equation}
\end{lemma}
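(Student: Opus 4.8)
The upper bound in \eqref{eq:estimate_local} requires nothing new: it is exactly \eqref{LM_bal}, which was already obtained from Lemma~\ref{lem:geometry}. So the plan is entirely devoted to the lower bound $m_\mu\ge\frac1N\lambda_1$, and the idea is to estimate $E_\mu$ from below on $\mathcal B_{\bar\alpha}$ by a function of the single scalar $t:=\|u\|_{\mathcal H}^2-1$ and then carry out a one–variable minimization.

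First I would note that for $u\in\mathcal B_{\bar\alpha}$, since $\|u\|_2^2=1$ we have $E_\mu(u)=\tfrac12 t-\tfrac{\mu}{2^{*}}\|u\|_{2^{*}}^{2^{*}}$. Dropping the nonnegative term $\int V u^2$ gives $\|\nabla u\|_2^2\le t$, so the Sobolev inequality $S\|u\|_{2^{*}}^2\le\|\nabla u\|_2^2$ yields $\|u\|_{2^{*}}^{2^{*}}\le S^{-2^{*}/2}t^{2^{*}/2}$ and hence $E_\mu(u)\ge g(t):=\tfrac12 t-\tfrac{\mu}{2^{*}S^{2^{*}/2}}t^{2^{*}/2}$. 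Two constraints on $t$ are available: the variational characterization of $\lambda_1$ associated with \eqref{eq:eigen1} gives $\|\nabla u\|_2^2+\int V u^2\ge\lambda_1$, hence $t\ge\lambda_1$; and $u\in\mathcal B_{\bar\alpha}$ together with \eqref{eq:baralpha} gives $t<S^{N/2}\mu^{1-\frac N2}$.

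Next I would study $g$ on the interval $[\lambda_1,\,S^{N/2}\mu^{1-\frac N2})$. A direct computation gives $g'(t)=\tfrac12-\tfrac{\mu}{2S^{2^{*}/2}}t^{2^{*}/2-1}$, and since $2^{*}/2-1=\tfrac{2}{N-2}$ one checks that $g'$ vanishes exactly at $t_*:=S^{N/2}\mu^{1-\frac N2}$, i.e.\ at the right endpoint $\bar\alpha-1$ of the admissible range of $t$. Because \eqref{eq:muless} forces $t_*>N(\lambda_1+1)>\lambda_1$, the function $g$ is strictly increasing on $[\lambda_1,t_*)$, so $m_\mu\ge\inf_{t\in[\lambda_1,t_*)}g(t)=g(\lambda_1)$. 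Finally, using $\tfrac12-\tfrac1N=\tfrac1{2^{*}}$, I would write $g(\lambda_1)-\tfrac1N\lambda_1=\tfrac{\lambda_1}{2^{*}}\bigl(1-\tfrac{\mu}{S^{2^{*}/2}}\lambda_1^{2/(N-2)}\bigr)$, and observe that \eqref{eq:muless}, because $N(\lambda_1+1)>\lambda_1$, gives $\mu<(S^{N/2}/\lambda_1)^{2/(N-2)}$, which is precisely $\mu\,\lambda_1^{2/(N-2)}<S^{2^{*}/2}$; hence $g(\lambda_1)\ge\tfrac1N\lambda_1$, and the lower bound follows.

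I do not expect a serious obstacle here: the argument is a one–dimensional calculus estimate built on the Sobolev inequality and the bound $t\ge\lambda_1$. The only delicate point is the exponent bookkeeping, and in particular noticing the (pleasant) coincidence that the maximizer $t_*$ of $g$ equals $\bar\alpha-1$; this is exactly what makes $g$ monotone on the whole admissible interval and lets the infimum be read off at $t=\lambda_1$, and it is also what pins down \eqref{eq:muless} as the correct smallness threshold guaranteeing $g(\lambda_1)\ge\tfrac1N\lambda_1$.
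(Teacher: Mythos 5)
Your proposal is correct, but for the lower bound it follows a genuinely different route from the paper. The upper bound is handled identically (it is just \eqref{LM_bal}, i.e.\ Lemma~\ref{lem:geometry} tested on $\varphi_1$). For $m_\mu\ge\frac1N\lambda_1$, however, the paper argues through the equation: it invokes Lemma~\ref{lem:max_Princ} ($\lambda\ge0$) and the identity $E_\mu(u)=\frac1N\bigl(\|u\|_{\mathcal H}^2-1\bigr)+\frac{\lambda}{2^*}$, which is valid only for constrained critical points of \eqref{eq:main1}; so its estimate really concerns solutions (in effect the minimizer, whose existence is only established afterwards in Lemma~\ref{lem:local_min_attained}, which itself quotes this lemma). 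You instead bound $E_\mu$ from below on all of $\mathcal B_{\bar\alpha}$ by the scalar function $g(t)=\frac t2-\frac{\mu}{2^*S^{2^*/2}}\,t^{2^*/2}$ with $t=\|u\|_{\mathcal H}^2-1\in[\lambda_1,\,S^{N/2}\mu^{1-\frac N2})$, using only $V\ge0$, the Sobolev inequality, and the Rayleigh characterization of $\lambda_1$ (the same fact the paper uses to assert $\mathcal B_\alpha\ne\emptyset$ iff $\alpha\ge\lambda_1+1$); your bookkeeping is right: $g'$ vanishes exactly at $t_*=\bar\alpha-1$, \eqref{eq:muless} forces $\lambda_1<N(\lambda_1+1)<t_*$ so $g$ is increasing on the admissible range, and $\mu\lambda_1^{2/(N-2)}<S^{2^*/2}$ gives $g(\lambda_1)\ge\frac1N\lambda_1$. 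What each approach buys: yours needs no information about Lagrange multipliers or attainment, so it applies directly to the infimum and removes the apparent chicken-and-egg issue with Lemma~\ref{lem:local_min_attained} (and in fact yields the slightly stronger strict bound $m_\mu\ge g(\lambda_1)>\frac1N\lambda_1$); the paper's computation is shorter and, as in Lemma~\ref{lem:loc_GS}, has the separate merit of bounding $E_\mu$ from below at \emph{every} constrained critical point, which is reused later for the ground-state property.
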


\begin{proof}
As a direct consequence of Lemma \ref{lem:geometry}, we obtain
\begin{equation*}
m_{\mu} \leq E(\varphi_{1}) < \frac{1}{N} S^{N/2} \mu^{1-\frac{N}{2}}.
\end{equation*}
By Lemma \ref{lem:max_Princ}, we have $\lambda \geq 0$. Then, combining \eqref{eq:main1} and \eqref{func:main1}, we obtain
\begin{equation*}
\begin{aligned}
m_{\mu} \geq E_{\mu}(u)&=\frac{1}{2}\int_{\mathbb{R}^{N}}|\nabla u|^{2}dx + \frac{1}{2}\int_{\mathbb{R}^{N}}V(x) u^{2}dx-\frac{\mu}{2^{*}}\int_{\mathbb{R}^{N}}|u|^{2^{*}}dx\\
& = \frac{1}{N} \left(\|u\|_{\mathcal{H}}^{2}-1\right)+ \frac{\lambda}{2^*} \|u\|_{2}^{2} \geq \frac{1}{N} \left(\|u\|_{\mathcal{H}}^{2} -1\right)\geq \frac{1}{N} \lambda_1.
\end{aligned}
\end{equation*}
This completes the proof.
\end{proof}
Combining this lemma with Lemma \ref{lem:split}, we naturally obtain the conclusion that $m_\mu$ is indeed a local minimum.

\begin{lemma}\label{lem:local_min_attained}
Assume that the hypotheses of Theorem~\ref{thm:GS} hold. Then, for every $0 < \mu < \mu^{*}$, the infimum $m_\mu$ is achieved, that is, there exists $u_\mu \in \mathcal{H}$ such that
\[
E_\mu(u_\mu) = m_\mu.
\]
In particular, $u_\mu$ is a local minimizer of $E_\mu$.
\end{lemma}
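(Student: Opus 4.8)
\textbf{Proof strategy for Lemma \ref{lem:local_min_attained}.}
The plan is to take a minimizing sequence for $m_\mu$ inside $\mathcal{B}_{\bar\alpha}$, upgrade it to a Palais--Smale sequence via Ekeland's variational principle applied on the (relatively open) constraint set $\mathcal{B}_{\bar\alpha}\subset\mathcal{M}$, and then invoke the splitting Lemma \ref{lem:split} to conclude strong convergence, the second (non-compact) alternative being ruled out by the energy gap established in Lemma \ref{energylevel_localmin}. First I would fix $\mu^*$ so that \eqref{eq:muless} holds for all $0<\mu<\mu^*$, which makes Lemma \ref{lem:geometry} available; in particular $m_\mu=\inf_{\mathcal{B}_{\bar\alpha}}E_\mu>0$ is finite and, crucially, $m_\mu<\tfrac1N S^{N/2}\mu^{1-N/2}$ while any $w\in\mathcal{U}_{\bar\alpha}$ (the ``boundary shell'') has $E_\mu(w)>\tfrac1N S^{N/2}\mu^{1-N/2}$. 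Thus a minimizing sequence $(v_n)$ stays, for $n$ large, in a set bounded away from $\mathcal{U}_{\bar\alpha}$, hence in the interior of $\mathcal{B}_{\bar\alpha}$ relative to $\mathcal{M}$; this is exactly what is needed so that the Ekeland procedure produces a sequence that is Palais--Smale for $E_\mu$ constrained to $\mathcal{M}$ (no extra multiplier from the $\|u\|_{\mathcal{H}}^2<\bar\alpha$ constraint, which is inactive), with associated Lagrange multipliers $\lambda_n$.

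Next I would check the two hypotheses of Lemma \ref{lem:split}. Boundedness in $\mathcal{H}$ is immediate since $\|v_n\|_{\mathcal{H}}^2<\bar\alpha$ for all $n$. For the sign condition $\|v_n^-\|_{\mathcal{H}}\to 0$: replacing $v_n$ by $|v_n|$ does not increase $E_\mu$ and preserves the $L^2$-norm and membership in $\mathcal{B}_{\bar\alpha}$, so one may assume $v_n\ge 0$ from the start, or argue that the Ekeland sequence obtained from a nonnegative minimizing sequence has vanishing negative part; either way the hypothesis holds. Applying Lemma \ref{lem:split} then gives $u_\mu\in\mathcal{H}$, $u_\mu\ge0$, $\lambda_n\to\lambda^0>0$, with $v_n\rightharpoonup u_\mu$ solving the limiting equation, and either $v_n\to u_\mu$ strongly in $\mathcal{H}$ or the dichotomy \eqref{energy:level_split} holds. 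In the latter case $m_\mu=\lim E_\mu(v_n)\ge E_\mu(u_\mu)+\tfrac1N S^{N/2}\mu^{1-N/2}\ge \tfrac1N S^{N/2}\mu^{1-N/2}$, where I use that $u_\mu$ is a nonnegative solution so $E_\mu(u_\mu)\ge 0$ by Lemma \ref{lem:loc_GS} together with $\lambda^0\ge 0$ (indeed $E_\mu(u_\mu)\ge\tfrac1N(\|u_\mu\|_{\mathcal{H}}^2-1)$, and $\|u_\mu\|_{\mathcal{H}}^2\ge\lambda_1+1$ since $u_\mu\in\mathcal{M}$ — or more simply $E_\mu(u_\mu)\geq \tfrac1N\lambda_1>0$ exactly as in the proof of Lemma \ref{energylevel_localmin}). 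This contradicts $m_\mu<\tfrac1N S^{N/2}\mu^{1-N/2}$. Hence $v_n\to u_\mu$ strongly in $\mathcal{H}$, so $u_\mu\in\mathcal{M}$, $\|u_\mu\|_{\mathcal{H}}^2\le\bar\alpha$, and $E_\mu(u_\mu)=m_\mu$; since the strict inequality $m_\mu<\tfrac1N S^{N/2}\mu^{1-N/2}$ together with \eqref{MPG_bal} forces $\|u_\mu\|_{\mathcal{H}}^2<\bar\alpha$, the minimum is attained in the (relatively) open set $\mathcal{B}_{\bar\alpha}$, so $u_\mu$ is genuinely a local minimizer of $E_\mu$ on $\mathcal{M}$.

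The main obstacle is the compactness step: without the energy gap coming from Lemma \ref{energylevel_localmin} the minimizing sequence could leak mass to infinity through a rescaled Aubin--Talenti bubble (the case $\alpha_n\to\infty$ inside the proof of Lemma \ref{lem:split}), and the Sobolev-critical term would prevent strong convergence. The whole argument hinges on the quantitative estimate $m_\mu<\tfrac1N S^{N/2}\mu^{1-N/2}$, which is precisely why Lemma \ref{lem:geometry} was set up with $\bar\alpha=S^{N/2}\mu^{1-N/2}+1$ and the test function $\varphi_1$; a secondary technical point is verifying that the Ekeland sequence can be taken Palais--Smale in the \emph{constrained} sense on $\mathcal{M}$ (not merely on $\mathcal{B}_{\bar\alpha}$), which follows because the constraint defining $\mathcal{B}_{\bar\alpha}$ is inactive along the sequence. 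I would also note at the end that positivity of $u_\mu$ follows from the strong maximum principle, as recorded in the remark after Theorem \ref{thm:MP}, and that $\lambda_\mu:=\lambda^0\in(0,\lambda_1)$ by Lemma \ref{lem:max_Princ}.
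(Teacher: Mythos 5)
Your proposal is correct and follows essentially the same route as the paper's proof: a nonnegative minimizing sequence in $\mathcal{B}_{\bar{\alpha}}$, Ekeland's principle to produce a bounded constrained Palais--Smale sequence with $\|u_n^-\|_{\mathcal{H}}\to 0$ and bounded multipliers, and then Lemma \ref{lem:split} combined with the gap $m_\mu<\tfrac1N S^{N/2}\mu^{1-\frac{N}{2}}$ from Lemma \ref{energylevel_localmin} to exclude the non-compact alternative. The only cosmetic difference is that you lower-bound $E_\mu(u^0)$ via Lemma \ref{lem:loc_GS} applied to the limiting normalized solution, whereas the paper invokes the definition of $m_\mu$ together with the bound $m_\mu\ge\tfrac1N\lambda_1$ — the same ingredients yielding the same contradiction.
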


\begin{proof}
By Lemma \ref{lem:geometry}, which provides the Mountain pass geometric structure, we can construct a minimizing sequence $(v_n)_{n} \subset B_\alpha$ for $m_\mu$. 
Since both $E_\mu$ and $B_\alpha$ are even, we may assume without loss of generality that $v_n \geq 0$ for all $n$. 
Moreover, Lemma \ref{lem:geometry} ensures that the gradients of $v_n$ are uniformly bounded, that is,
\[
\|\nabla v_n\|_2^2 \leq \bar{\alpha}= S^{N/2} \mu^{1-\frac{N}{2}} \quad \text{for all } n.
\] 
Then, applying Ekeland's variational principle, we can construct a Palais-Smale sequence $(u_n)_{n} \subset B_\alpha$ for $E_\mu$ at the level $m_\mu$ such that 
\[
\|u_n - v_n\|_{\mathcal{H}} \to 0 \quad \text{as } n \to +\infty,
\]
and consequently,
\[
E_\mu(u_n) \to m_\mu, \quad \nabla_{\mathcal{M}} E_\mu(u_n) \to 0 \qquad \|u_n^-\|_{\mathcal{H}} \to 0.
\]
Moreover, since $(u_n)_{n}$ is bounded in $\mathcal{H}$, the associated Lagrange multipliers
\[
\lambda_n := \int_{\mathbb{R}^N} |\nabla u_n|^2 + V(x) u_n^2 \, dx - \mu \int_{\mathbb{R}^N} |u_n|^{2^*} \, dx
\]
are also bounded. Hence, the assumptions of Lemma \ref{lem:split}  are satisfied, and we can apply it. Since $(u_n)_{n}$ is bounded in $\mathcal{H}$, there exists $u^0 \in \mathcal{H}$ such that, up to a subsequence,
\[
u_n \rightharpoonup u^0 \text{ in } \mathcal{H}, \quad u_n \to u^0 \text{ strongly in } L^2(\R^N), \quad \text{and } u_n \to u^0 \text{ a.e. } \R^{N}. 
\]

To conclude the proof, we show that $u_n \to u^0$ strongly in $\mathcal{H}$, which implies that $m_\mu$ is attained. We argue by contradiction, assuming that $u_n \rightharpoonup u^0$ in $\mathcal{H}$ but that the convergence is not strong. Then, by \eqref{energy:level_split} and \eqref{eq:estimate_local}, we have
\[
E_\mu (u^{0})+\frac{1}{N}S^{N/2} \mu^{1-\frac{N}{2}}\leq m_\mu <   \frac{1}{N}S^{N/2} \mu^{1-\frac{N}{2}},
\]
which implies that $E_{\mu}(u^{0})<0$. However, since $u_n \to u^0$ strongly in $L^2(\mathbb{R}^N)$, we have $\|u^0\|_2 = 1$, which implies that $u^0$ is nontrivial. 
In view of the definition of $m_\mu$, it follows that
\[
E_\mu(u^0) \ge m_\mu \ge \frac{1}{N} \lambda_1 > 0,
\]
which is a contradiction. Hence, $u_n \to u^0$ strongly in $\mathcal{H}$, so that $u_\mu$ attains the local minimum of $E_\mu$.
\end{proof}

From Lemmas \ref{lem:loc_GS} and \ref{lem:geometry}, it follows that if $u \notin \mathcal{B}_{\bar{\alpha}}$ is a solution of \eqref{eq:main1}, then
\[
E_{\mu}(u) \geq \frac{1}{N} S^{N/2} \mu^{1-\frac{N}{2}} > m_{\mu}.
\]
This implies that the local minimizer $u_{\mu} \in \mathcal{B}_{\bar{\alpha}}$ is a ground state.

\section{Mountain pass solution (proof of Theorems \ref{thm:MP})}\label{sec:mp}

In this section, we prove Theorem \ref{thm:MP}. Throughout this section, we assume $ N \geq 3$ and assumptions \eqref{potential:1}-\eqref{potential:special} are satisfied. In the case $N \geq 6$, all other assumptions remain the same, but we consider the potential satisfies \eqref{assump:Nge6}. In this case, under this assumption and $V$ is locally Lipschitz potential, the potential can be suitably small on a small ball centered at the origin, which implies
\begin{equation}\label{eq:ass_MPsec}
\|V\|_{L^{\infty}(B_{R}(0))}< L R,
\end{equation}
where $R$ is a positive constant. The main strategy for proving the theorem is as follows, partly following the approach in \cite{MR4476243,verzini2025normalizedsolutionsnonlinearschrodinger,MR4433054}.

To begin with, by the mountain pass geometry (see Lemma \ref{lem:geometry}) and Lemma \ref{lem:scaling_MP}, we can construct an associated Palais-Smale sequence using the classical method introduced by Jeanjean in \cite{MR1430506}.

Then, by a theorem from Ghoussoub’s book \cite[Thm. 4.1]{MR1251958} and the proof strategy from \cite[Proposition 3.4.]{MR4443784}, we construct a bounded positive Palais–Smale sequence.

Finally, by combining Lemma \ref{lem:split} with the estimates on the energy level, we recover compactness and thereby obtain the desired conclusion.

At the first step, we define
\begin{equation}\label{def:E_tilde}
  \tilde{E}_{\mu}(u,h):=E_{\mu}(e^{\frac{N}{2}h}u(e^{h}x)), \ \ \ \ \mbox{for all }(u,h)\in \mathcal{M}\times \R,
\end{equation}
\begin{equation}\label{def:path_tilde}
  \tilde{\Gamma}:=\{\tilde{\gamma}\in C([0,1], \mathcal{M}\times \R):\tilde{\gamma}(0)=(w_{0},0),\tilde{\gamma}(1)=(w_{h_{1}},0)\}
\end{equation}
and
\begin{equation}\label{def:MP_tilde}
  \tilde{c}_{\mu}:=\inf_{\tilde{\gamma} \in\tilde{\Gamma}}\sup_{t\in[0,1]}\tilde{E}_{\mu}(\tilde{\gamma}(t))
\end{equation}
(recall the definition of $w_{0}=\varphi_1$ in \eqref{MPG_in_bal_1} and $w_{h_1}$ in \eqref{eq:left_right}).

\begin{lemma}[{\cite[Lemma 5.1.]{verzini2025normalizedsolutionsnonlinearschrodinger}}]\label{ener_compare}
We have
\begin{equation*}
  \tilde{c}_{\mu}=c_{\mu}. 
\end{equation*}
\end{lemma}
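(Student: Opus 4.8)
The plan is to establish the two inequalities $\tilde{c}_{\mu}\le c_{\mu}$ and $c_{\mu}\le\tilde{c}_{\mu}$ separately, the whole argument resting on the fact that the dilation $(u,h)\mapsto e^{\frac{N}{2}h}u(e^{h}\,\cdot\,)$ maps $\mathcal{M}\times\R$ continuously into $\mathcal{M}$, together with the tautology $\tilde{E}_{\mu}(u,h)=E_{\mu}\big(e^{\frac{N}{2}h}u(e^{h}\,\cdot\,)\big)$ coming from the definition \eqref{def:E_tilde}.

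For $\tilde{c}_{\mu}\le c_{\mu}$: to each $\gamma\in\Gamma$ I would associate the lifted path $\tilde{\gamma}(t):=(\gamma(t),0)$. Since $\tilde{\gamma}(0)=(w_{0},0)$ and $\tilde{\gamma}(1)=(w_{h_{1}},0)$, one has $\tilde{\gamma}\in\tilde{\Gamma}$, and $\tilde{E}_{\mu}(\gamma(t),0)=E_{\mu}(\gamma(t))$ for every $t$, so $\sup_{t\in[0,1]}\tilde{E}_{\mu}(\tilde{\gamma}(t))=\sup_{t\in[0,1]}E_{\mu}(\gamma(t))$. Taking the infimum over $\gamma\in\Gamma$ gives $\tilde{c}_{\mu}\le c_{\mu}$.

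For $c_{\mu}\le\tilde{c}_{\mu}$: given $\tilde{\gamma}\in\tilde{\Gamma}$, write $\tilde{\gamma}(t)=(\eta(t),h(t))$ with $\eta\in C([0,1],\mathcal{M})$, $h\in C([0,1],\R)$, $\eta(0)=w_{0}$, $\eta(1)=w_{h_{1}}$, and $h(0)=h(1)=0$; then set $\gamma(t):=h(t)\star\eta(t)=e^{\frac{N}{2}h(t)}\eta(t)(e^{h(t)}\,\cdot\,)$. By Lemma \ref{lem:scaling_MP} the dilation preserves the $L^{2}$-norm and, by Lemma \ref{lem:pot_scal}, keeps $\int_{\R^{N}}V\eta^{2}$ finite, so $\gamma(t)\in\mathcal{M}$ for all $t$; the continuity of $\gamma$ follows from the joint continuity of the dilation on $\mathcal{M}\times\R$. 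Because $h(0)=h(1)=0$, we get $\gamma(0)=\eta(0)=w_{0}$ and $\gamma(1)=\eta(1)=w_{h_{1}}$, hence $\gamma\in\Gamma$, while $E_{\mu}(\gamma(t))=\tilde{E}_{\mu}(\eta(t),h(t))=\tilde{E}_{\mu}(\tilde{\gamma}(t))$ by \eqref{def:E_tilde}. Therefore $c_{\mu}\le\sup_{t\in[0,1]}E_{\mu}(\gamma(t))=\sup_{t\in[0,1]}\tilde{E}_{\mu}(\tilde{\gamma}(t))$, and taking the infimum over $\tilde{\gamma}\in\tilde{\Gamma}$ yields $c_{\mu}\le\tilde{c}_{\mu}$. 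Combining the two inequalities finishes the proof.

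The only point demanding genuine care — hence the main obstacle — is the continuity of the dilation action in the norm $\|\cdot\|_{\mathcal{H}}$, and specifically of the map $h\mapsto\int_{\R^{N}}V(e^{-h}x)\eta^{2}\,dx$ along the compact image of the path; but the locally uniform domination $V(e^{-h}x)\le C(e^{-h})V(x)$ furnished by Lemma \ref{lem:pot_scal}, combined with dominated convergence, reduces this to a routine verification. The gradient and $L^{2^{*}}$ terms present no difficulty.
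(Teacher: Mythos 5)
Your argument is correct: the paper itself does not prove this lemma (it is quoted from \cite[Lemma 5.1]{verzini2025normalizedsolutionsnonlinearschrodinger}), and your two-inequality scheme --- lifting each $\gamma\in\Gamma$ to $(\gamma(\cdot),0)\in\tilde\Gamma$, and projecting each $\tilde\gamma=(\eta,h)\in\tilde\Gamma$ to $e^{h(\cdot)}\star\eta(\cdot)\in\Gamma$ using $h(0)=h(1)=0$ --- is exactly the standard proof used in that reference. The one genuinely delicate point, joint continuity of the dilation in the $\mathcal{H}$-norm (in particular of the potential term $h\mapsto\int V(e^{-h}x)\eta^{2}$), is correctly handled by your reduction to Lemma \ref{lem:pot_scal} together with dominated convergence and Cauchy--Schwarz, so no gap remains.
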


In order to estimate the energy associated with the mountain pass, we consider the following classical function. Let $N \ge 3$. We denote by $U_\varepsilon$ the Aubin-Talenti function centered at the origin:
\[
U_\varepsilon(x) :=  \frac{\varepsilon^{(N-2)/2}}{(\varepsilon^2 + |x|^2)^{(N-2)/2}}, \quad x \in \mathbb{R}^N,
\]
where $\varepsilon>0$ is a sufficiently small fixed constant. Let $R>0$ and let $\eta \in C^{\infty}_{0}(\R^N)$, $0 \leq \eta \leq 1$, be a cut-off function satisfying
\[
\eta = 1 \quad \text{in a neighborhood of } 0, \qquad 
\eta = 0 \quad \text{in } B_R^c(0).
\]
We define

\begin{equation}\label{eq:def_u_eps}
u_{\varepsilon}(x)=\frac{\eta[N(N-2)\varepsilon^{2}]^{^{\frac{N-2}{4}}}}{[\varepsilon^{2}+ |x|^{2}]^{\frac{N-2}{2}}}.
\end{equation}

Using the estimates provided by Struwe \cite[page 179]{Struwebook} (or those by
Brezis and Nirenberg, \cite[eqs. (1.13), (1.29)]{BrezisNirenberg}), we have
\begin{equation}\label{est:struwe}
    \begin{aligned}\displaystyle
\|\nabla u_{\varepsilon}\|_{2}^{2}&=S^{N/2}+O(\varepsilon^{N-2}),\\
\|u_{\varepsilon}\|_{2^{*}}^{2^{*}}&=S^{N/2}+O(\varepsilon^{N}),\\
\|u_{\varepsilon}\|_{2}^{2}&=
\begin{cases}
c \varepsilon^{2}+O(\varepsilon^{N-2}), & \text{if }N\geq5,\\
c \varepsilon^{2}|\ln \varepsilon|+O(\varepsilon^{2}), & \text{if }N=4,\\
c \varepsilon+O(\varepsilon^{2}), & \text{if } N=3,
\end{cases}
    \end{aligned}
\end{equation}
where $c$ denotes a strictly positive constant (depending on $N$). Also, we have
\begin{equation}\label{est:Up}
    \begin{aligned}\displaystyle
\|U_{\varepsilon}\|_{p}^{p}=
\begin{cases}
 c_{1}\varepsilon^{N-\frac{N-2}{2}p}+o(\varepsilon^{N-\frac{N-2}{2}p}), & \text{if }\frac{N}{N-2}<p<2^{*},\\
 c_{1}\varepsilon^{\frac{N}{2}}|\ln \varepsilon|+O(\varepsilon^{\frac{N}{2}}), & \text{if }p=\frac{N}{N-2},\\
c_{1}\varepsilon^{\frac{N-2}{2}p}+o(\varepsilon^{\frac{N-2}{2}p}), & \text{if } 1\leq p<\frac{N}{N-2}.
\end{cases}
    \end{aligned}
\end{equation}
where $c_{1}$ denotes a strictly positive constant (depending on $N,p$).

%

Recall Lemma \ref{lem:local_min_attained}, we denote $u_{\mu}$ is a ground state found in Section \ref{sec:GS} for $m_{\mu}$ with the Lagrange multiplier $\lambda_{\mu}>0$. For $N\geq 6$, we assume the assumption \eqref{assump:Nge6} holds. Hence, we have the following estimate for mountain pass level $c_{\mu}$.

\begin{lemma}\label{lem:es_case3}
Under the assumptions of Theorem \ref{thm:MP}. We have
\begin{equation}\label{eq:est_MP3}
c_{\mu}< m_{\mu}+ \frac{1}{N}S^{N/2} \mu^{1-\frac{N}{2}}.
\end{equation}
\end{lemma}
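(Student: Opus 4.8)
The goal is the strict energy inequality $c_{\mu} < m_{\mu} + \frac{1}{N}S^{N/2}\mu^{1-N/2}$, which is the key "below the first threshold of compactness" estimate that, combined with Lemma~\ref{lem:split}, will yield a mountain pass solution. The natural plan is to exhibit a single competitor path $\gamma \in \Gamma$ whose maximal energy is strictly below the right-hand side. Since the left endpoint of the mountain pass can be taken to be the ground state $u_\mu$ (the minimizer of $m_\mu$, which lies in $\mathcal{B}_{\bar\alpha}$) rather than $\varphi_1$ -- these give the same value $c_\mu$ up to reparametrization, since both are admissible interior starting points below the energy barrier -- I would build the path by attaching to $u_\mu$ a bump of Aubin--Talenti type. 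Concretely, consider the family $u_\mu + t\, u_\varepsilon$ (suitably renormalized in $L^2$ to stay on $\mathcal{M}$), or better, a two-parameter deformation combining the scaling $h \star (\,\cdot\,)$ from \eqref{eq:scaling} with the addition of $\lambda$-adjusted Talenti bubbles; letting $t$ run from $0$ to large, the energy first stays near $m_\mu$, then the bubble contributes at most $\frac{1}{N}S^{N/2}\mu^{1-N/2}$, and for large $t$ (or after scaling) the energy goes to $-\infty$ by Lemma~\ref{lem:scaling_MP}, so the path is admissible.

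\textbf{Key steps, in order.} First, fix the ground state $u_\mu$ with its Lagrange multiplier $\lambda_\mu > 0$, and set up the test family $w_{t,\varepsilon}$ obtained by adding a rescaled, cut-off Talenti bubble $u_\varepsilon$ concentrated near a point where $V$ is small (the origin, using \eqref{assump:Nge6} for $N\ge 6$, so that $\|V\|_{L^\infty(B_R)}$ is controlled as in \eqref{eq:ass_MPsec}), renormalizing so that $w_{t,\varepsilon} \in \mathcal{M}$. Second, expand $E_\mu(w_{t,\varepsilon})$ using the Brezis--Lieb-type splitting: the cross terms between $u_\mu$ and $u_\varepsilon$ are handled via the equation solved by $u_\mu$ (so $\nabla u_\mu \cdot \nabla u_\varepsilon$ pairs against $\lambda_\mu u_\mu + \mu u_\mu^{2^*-1} - V u_\mu$), and one uses the Struwe estimates \eqref{est:struwe}, \eqref{est:Up} to control every remainder. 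Third, optimize over $t$: the map $t \mapsto E_\mu(w_{t,\varepsilon})$ has a mountain-pass profile whose maximum, call it $g(\varepsilon)$, satisfies $g(\varepsilon) \le m_\mu + \frac{1}{N}S^{N/2}\mu^{1-N/2} - (\text{positive gain}) + (\text{error})$. The positive gain comes from the subcriticality of the $L^2$-renormalization term coupled with the strict positivity of $\lambda_\mu$ (exactly the quantity appearing in the non-optimal assumption \eqref{assump:notoptimal}: one needs $\|V\|_{L^\infty(B_R)}$ dominated by roughly $\frac{2s^2}{s+1}\lambda_\mu$), while the error is $o(\varepsilon^{\min(2,N-2)})$ or $o(\varepsilon^2|\ln\varepsilon|)$ depending on $N$. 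Fourth, choose $\varepsilon$ small so that gain beats error, giving $c_\mu \le \sup_t E_\mu(w_{t,\varepsilon}) < m_\mu + \frac{1}{N}S^{N/2}\mu^{1-N/2}$.

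\textbf{Main obstacle.} The delicate point is the dimension-dependent competition between the gain $\sim \lambda_\mu \|u_\varepsilon\|_2^2$ (which is $\sim \varepsilon^2$ for $N\ge 5$, $\sim \varepsilon^2|\ln\varepsilon|$ for $N=4$, $\sim\varepsilon$ for $N=3$) and the perturbative errors coming from the potential term $\int V u_\varepsilon^2$ and from the cross terms. In low dimensions ($N=3,4$) the gain is comparatively large, so the argument is robust; in high dimensions ($N\ge 6$) the error term $\varepsilon^{N-2}$ from $\|\nabla u_\varepsilon\|_2^2$ is negligible against $\varepsilon^2$, but the potential contribution $\int V u_\varepsilon^2 \lesssim \|V\|_{L^\infty(B_R)}\,\varepsilon^2$ is exactly of the same order as the gain, which forces the smallness condition $\|V\|_{L^\infty(B_R)} < \frac{2s^2}{s+1}\lambda_\mu$ (and hence assumption \eqref{assump:Nge6}, which makes $\|V\|_{L^\infty(B_R)}$ arbitrarily small by locally Lipschitz continuity and $V(0)=0$). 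Keeping all the error terms uniform while introducing the auxiliary scaling parameter $s \to 1^+$ -- needed to create a genuine strict inequality rather than just $\le$ -- is the technically heaviest part; I would isolate it into the explicit computation of $\sup_t E_\mu(w_{t,\varepsilon})$ and invoke the Struwe/Brezis--Nirenberg asymptotics at the end.
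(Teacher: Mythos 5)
Your architecture is the paper's: the paper also tests with $W_{t,\varepsilon}=u_{\mu}+t\,u_{\varepsilon}$, renormalizes through $s=\|W_{t,\varepsilon}\|_{2}$ and $\tilde W_{t,\varepsilon}=s^{\frac{N-2}{2}}W_{t,\varepsilon}(s\,\cdot)$, expands $E_{\mu}(\tilde W_{t,\varepsilon})$ using the equation and Pohozaev identity for $u_{\mu}$ together with the asymptotics \eqref{est:struwe}--\eqref{est:Up}, and, for $N\ge 6$, plays the gain $\lambda_{\mu}\|u_{\varepsilon}\|_{2}^{2}$ against the loss $\|V\|_{L^{\infty}(B_{R}(0))}\|u_{\varepsilon}\|_{2}^{2}$, which is exactly where \eqref{assump:Nge6} and \eqref{eq:ass_MPsec} enter. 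Up to the (shared, and equally informal) point about replacing the endpoint $\varphi_{1}$ by $u_{\mu}$, this matches the paper's high-dimensional case.

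The gap is in your low-dimensional bookkeeping. You take the gain to be $\lambda_{\mu}\|u_{\varepsilon}\|_{2}^{2}$ in \emph{every} dimension and claim the argument is "robust" for $N=3,4$. But the potential contribution $\int_{B_R} V\,\tilde u_{\varepsilon}^{2}\,dx\le \|V\|_{L^{\infty}(B_{R}(0))}\|u_{\varepsilon}\|_{2}^{2}$ is of \emph{exactly} the same order as that gain in all dimensions, so your comparison would force a smallness condition on $\|V\|_{L^{\infty}(B_{R}(0))}$ also for $N=3,4,5$, which the lemma does not assume there; and for $N=3$ the situation is worse, since the error $O(\varepsilon^{N-2})=O(\varepsilon)$ from $\|\nabla u_{\varepsilon}\|_{2}^{2}=S^{N/2}+O(\varepsilon^{N-2})$ (with no favorable sign) is again of the same order as $\lambda_{\mu}\|u_{\varepsilon}\|_{2}^{2}\sim\varepsilon$, so "gain beats error" cannot be concluded — this is the classical Brezis--Nirenberg low-dimensional obstruction. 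The paper closes this differently: for $N=3,4,5$ it keeps, via $(1+t)^{2^{*}}\ge 1+t^{2^{*}}+2^{*}t+2^{*}t^{2^{*}-1}$, the interaction term $-\mu t^{2^{*}-1}\int u_{\mu}\,\tilde u_{\varepsilon}^{2^{*}-1}dx$, which by \eqref{est:Up} is of order $-\varepsilon^{\frac{N-2}{2}}$ and strictly dominates all remainders ($\varepsilon^{N-2}$, $\|u_{\varepsilon}\|_{2}^{2}$, and the potential terms) with no smallness of $V$ required; the $\lambda_{\mu}$-versus-$\|V\|_{L^{\infty}(B_{R}(0))}$ competition is invoked only when $N\ge 6$, where $\varepsilon^{\frac{N-2}{2}}\lesssim\varepsilon^{2}$ and the interaction term no longer wins. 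As written, your scheme therefore does not prove \eqref{eq:est_MP3} for $N=3$ (and only proves $N=4,5$ under an extra hypothesis on $V$); identifying the correct dominant term dimension by dimension is the missing ingredient.
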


\begin{proof}
We recall the localized Aubin–Talenti function $u_{\eps}$, as defined in \eqref{eq:def_u_eps}, together with the estimates established in \eqref{est:struwe} and \eqref{est:Up}. Then, we define $W_{t,\eps}=u_{\mu}+tu_{\eps}$, where $t\geq0$. We define $s=\|W_{t,\varepsilon}\|_{2}>0$ and
\[
\tilde{W}_{t,\varepsilon}= s^{\frac{N-2}{2}}W_{t,\eps} (sx).
\]
Thus,
\[
\tilde{W}_{t,\varepsilon}=\tilde{u}_{\mu}+t\tilde{u}_{\varepsilon},\qquad\text{ with }\|\tilde{W}_{t,\varepsilon}\|_{2}=1.
\]
Moreover, we have
\begin{equation}\label{eq:def_s}
  s^{2}=\|u_{\mu}+tu_{\varepsilon}\|_{2}^{2}= 1+t^{2}\|u_{\varepsilon}\|_{2}^{2}+2t \int_{\R^{3}} u_{\mu}u_{\varepsilon}\,dx,
\end{equation}
and
\begin{equation}\label{est_s}
\begin{aligned}
  s-1=\frac{s^{2}-1}{s+1}=&   \frac{1}{s+1}\left\{ t^{2} \|u_{\varepsilon}\|_{2}^{2}+2t \int_{\R^{3}} u_{\mu}u_{\varepsilon}\,dx \right\}.
\end{aligned}
\end{equation}
Subsequently, we consider the relationship between $c_{\mu}$ and $E_{\mu}(\tilde{W}_{t,\varepsilon})$. Fix $\varepsilon$ and let $t\rightarrow 0$, it is evident that
\begin{equation*}
 \tilde{W}_{t,\varepsilon}\to u_\mu,\qquad\text{and }  E_{\mu}(u_{\mu})<0, \quad\|\nabla u_\mu. 
 \|_{2}\leq \bar{R}
\end{equation*}
Similarly, when $t\rightarrow +\infty$, we have
\begin{equation*}
  E_{\mu}(\tilde{W}_{t,\varepsilon})\rightarrow -\infty \ \ \mbox{with } \ \|\nabla\tilde{W}_{t,\varepsilon}\|_{2}\rightarrow +\infty.
\end{equation*}
Thus, we can take $t_{1}(\varepsilon)\ll 1$ and $t_{2}(\varepsilon)\gg 1$ such that
\begin{equation*}\label{MP_COMP}
  c_{\mu}\leq \max_{t_{1}(\varepsilon)\leq t\leq t_{2}(\varepsilon)}E_{\mu}(\tilde{W}_{t,\varepsilon}).
\end{equation*}

\underline{Case 1. $N=3,4,5$.} Since $3\le N \le 5$, we have $(1+t)^{2^{*}} \geq 1 +t ^{2^{*}}+2^{*} t +2^{*} t^{2^{*}-1}$ for every $t \geq 0$. Thus, by direct computation, we have
\begin{equation*}
  \begin{aligned}
  E_{\mu}(\tilde{W}_{t,\varepsilon})\leq & \frac{1}{2}\int_{\R^{N}}|\nabla \tilde{u}_{\mu}|^{2}dx+ t \int_{\R^{N}}\nabla \tilde{u}_{\mu}\cdot \nabla \tilde{u}_{\varepsilon}dx+\frac{t^{2}}{2}\int_{\R^{N}}|\nabla \tilde{u}_{\varepsilon}|^{2}dx\\
  & +\frac{1}{2}\int_{\R^{N}}V(x) \tilde{u}_{\mu}^{2}dx+t \int_{\R^{N}}V(x) \tilde{u}_{\mu}\cdot  \tilde{u}_{\varepsilon}dx+\frac{t^{2}}{2}\int_{\R^{N}} V(x) \tilde{u}_{\varepsilon}^{2}dx\\
  & -\frac{\mu}{2^{*}}\int_{\R^{N}}\tilde{u}_{\mu}^{2^{*}}dx-\frac{\mu t^{2^{*}}}{2^{*}}\int_{\R^{N}}\tilde{u}_{\varepsilon}^{2^{*}}dx-\mu t \int_{\R^{N}} \tilde{u}_{\mu}^{2^{*}-1} \tilde{u}_{\varepsilon}dx- \mu t^{2^{*}-1} \int_{\R^{N}} \tilde{u}_{\mu} \tilde{u}_{\varepsilon}^{2^{*}-1}dx.
  \end{aligned}
\end{equation*}
Then, following the same steps as in \cite[Lemma 5.4.]{verzini2025normalizedsolutionsnonlinearschrodinger}, we obtain
\begin{equation*}
  \begin{aligned}
  E_{\mu}(\tilde{W}_{t,\varepsilon})\leq &m_{\mu} +\frac{1}{N}S^{\frac{N}{2}}\mu^{1-\frac{N}{2}} +O(\eps^{N-2})+I+II+III,
  \end{aligned}
\end{equation*}
where
\begin{equation*}
\begin{split}
I:=&E_{\mu}(\tilde{u}_{\mu})-E_{\mu}(u_{\mu})+t \lambda_{\mu} s^{2} \int_{\R^{N}}\tilde{u}_{\mu}\tilde{u}_{\varepsilon}dx,\\
II:=&t \int_{\R^{N}}\left[V(x)-s^{2}V(sx)\right] \tilde{u}_{\mu}\cdot  \tilde{u}_{\varepsilon}dx,\\
III:=&\frac{t^{2}}{2}\int_{\R^{N}} V \tilde{u}_{\varepsilon}^{2}dx-\mu t^{2^{*}} \int_{\R^{N}} \tilde{u}_{\mu} \tilde{u}_{\varepsilon}^{2^{*}-1}dx.
\end{split}
\end{equation*}
To begin, we estimate $I$. We have
\begin{equation*}
\begin{aligned}
E_{\mu}(\tilde{u}_{\mu})-E_{\mu}(u_{\mu}) &= \frac{1}{2} \int_{\R^N} \left(s^{-2} V\left(\frac{x}{s}\right)-V(x)  \right) u_{\mu}^2 dx \\
&=\frac{1}{2}\int_{\R^N} u_{\mu}^2(x) \int^{s}_{1} \left[-2z^{-3} V\left(\frac{x}{z}\right) - z^{-3} \nabla V \left(\frac{x}{z}\right) \cdot \left(\frac{x}{z}\right) \right]dzdx\\
&=\int^{s}_{1} z^{N-3} \left(\int_{\R^N} u_{\mu}^2(x)\left[- V\left(\frac{x}{z}\right) - \frac{1}{2} \nabla V \left(\frac{x}{z}\right) \cdot \left(\frac{x}{z}\right) \right] d\left(\frac{x}{z}\right) \right) dz\\
&=-\int^{s}_{1} \int_{\R^N} z^{N-3} u_{\mu}^2(zx) \left[ V(x)+ \frac{1}{2} \nabla V(x) \cdot x \right]dx dz. 
\end{aligned}
\end{equation*}
By Lemma \ref{lem:Pohozaev} and since $u_{\mu}$ is a ground state, we know $u_{\mu}$ satisfies \eqref{eq:Pohozaev}. Consequently,
\[
\lambda_{\mu} \int_{\R^N} u_{\mu}^2 dx= \frac{1}{2} \int_{\R^N} \nabla V(x) \cdot x u_{\mu}^2 \, dx +\int_{\R^N} V(x) u_{\mu}^{2} dx.
\]
Then, we have
\[
E_{\mu}(\tilde{u}_{\mu})-E_{\mu}(u_{\mu}) =\int^{s}_{1} \left(\int_{\R^3} \left(u_{\mu}^2(x)-z^{N-3} u_{\mu}^2(zx) \right)\left[ V(x)+ \frac{1}{2}\nabla V(x) \cdot x \right]dx -\lambda_{\mu}\right) dz.
\]
Recall that the definition of $s$ \eqref{eq:def_s}, we have $s \to 1^+$ as $\eps \to 0^+$. Since $z \in [1,s]$, we have that, for $\eps$ small, for every $\delta>0$, there exists $\sigma>0$ independent of $z$, such that
\[
\left| \int_{\R^3\backslash B_{\sigma}} \left(u_{\mu}^2(x)-z^{N-3} u_{\mu}^2(zx) \right)\left[  V(x)+\frac{1}{2} \nabla V(x) \cdot x \right]dx \right| <\delta.
\]
On the other hand, by \eqref{potential:1} and \eqref{potential:special}, there exists $C_1>0$ such that
\begin{equation*}
\begin{aligned}
&\left| \int_{ B_{\sigma}} \left(u_{\mu}^2(x)-z^{N-3} u_{\mu}^2(zx) \right)\left[  V(x)+ \frac{1}{2} \nabla V(x) \cdot x \right]dx \right| \\
& \leq C_1  \|V\|_{L^{\infty}(B_{\sigma})} \int_{ B_{\sigma}} \left|u_{\mu}^2(x)-z^{N-3} u_{\mu}^2(zx) \right| dx \to 0^{+},
\end{aligned}
\end{equation*}
as $z\to 1^{+}$. This yields that, for $\varepsilon>0$ sufficiently small, we have
\[
\left| \int_{ B_{\sigma}} \left(u_{\mu}^2(x)-z^{N-3} u_{\mu}^2(zx) \right)\left[  V(x)+ \frac{1}{2}\nabla V(x) \cdot x \right]dx \right| \leq \delta.
\]
Combining the above estimates, and since $\delta>0$ is arbitrary, we deduce that
\[
E_{\mu}(\tilde{u}_{\mu})-E_{\mu}(u_{\mu}) \leq \int^{s}_{1}(3\delta-\lambda_{\mu}) dz = -\lambda_{\mu}(s-1) +o(|s-1|).
\]
Therefore, by \eqref{est_s}, we have
\begin{equation*}
\begin{aligned}
I&=E_{\mu}(\tilde{u}_{\mu})-E_{\mu}(u_{\mu})+t \lambda_{\mu} s^{2} \int_{\R^{N}}\tilde{u}_{\mu}\tilde{u}_{\varepsilon}dx\\
& \le -\lambda_{\mu}\frac{1}{s+1}\left\{ t^{2} \|u_{\varepsilon}\|_{2}^{2}+2t \int_{\R^{3}} u_{\mu}u_{\varepsilon}\,dx \right\} +t \lambda_{\mu} \int_{\R^{N}}u_{\mu}u_{\varepsilon}dx+o(|s-1|)\\
&=-\lambda_{\mu}\frac{t^{2} \|u_{\varepsilon}\|_{2}^{2}}{s+1}+\frac{s-1}{s+1}\lambda_{\mu}t  \int_{\R^{N}}u_{\mu}u_{\varepsilon}dx+o(|s-1|)\\
& \le \frac{s-1}{s+1}\lambda_{\mu}t  \int_{\R^{N}}u_{\mu}u_{\varepsilon}dx+o(|s-1|) = o(\eps^{\frac{N-2}{2}}).
\end{aligned}
\end{equation*}
Then, we turn to the estimate of $II$. By \eqref{potential:1}, we obtain
\begin{equation*}
\begin{aligned}
II=\int_{\R^N}\left[V(x)-s^2V(sx)\right] \tilde{u}_{\mu} \tilde{u}_{\eps} dx&=\int_{\R^N}\left|V(x)-V(sx)+V(sx)-s^2V(sx)\right| \tilde{u}_{\mu} \tilde{u}_{\eps} dx \\
&\leq \int_{B_R} L\left|(1-s)x\right|  \tilde{u}_{\mu} \tilde{U}_{\eps} dx + |1-s^2|\int_{B_R} \left|V(sx)\right|  \tilde{u}_{\mu} \tilde{U}_{\eps} dx\\
&\leq(s-1)LR\int_{B_R}\tilde{u}_{\mu} \tilde{U}_{\eps} dx+(s^2-1)\int_{B_R}\left|V(sx)\right|  \tilde{u}_{\mu} \tilde{U }_{\eps} dx\\
&=o(\eps^{\frac{N-2}{2}}).
\end{aligned}
\end{equation*}
Finally, we deal with $III$. From \eqref{est:struwe}, we have
\[
\int_{\R^{N}}V(x) \tilde{u}_{\eps}^{2}dx =  s^{-2} \int_{B_R}V\left(\frac{x}{s}\right)U_{\eps}^{2}dx \leq s^{-2} \|V\|_{L^{\infty}(B_{R})} \int_{B_R} U_{\eps}^{2}dx=o(\eps^{\frac{N-2}{2}}).
\]
By \eqref{est:Up} and a direct computation, we obtain
\begin{equation*}
- \mu t^{2^{*}-1} \int_{\R^{N}} \tilde{u}_{\mu} \tilde{u}_{\varepsilon}^{2^{*}-1}dx \leq -C_{1}\eps^{\frac{N-2}{2}}+o(\eps^{\frac{N-2}{2}}),
\end{equation*}
where $C_{1}$ is associated with the infimum of $u_{\mu}$ in the ball $B_{R}$. Therefore,
\[
III\leq -C_{1}\eps^{\frac{N-2}{2}}+o(\eps^{\frac{N-2}{2}}).
\]
In conclusion, combining the previous estimates yields \eqref{eq:est_MP3}.

\underline{Case 2. $N\geq 6$.} Following the procedure of Case 1, we proceed to estimate $I$, $II$, and $III$. Recall that
\[
E_{\mu}(\tilde{W}_{t,\varepsilon})\leq m_{\mu} +\frac{1}{N}S^{\frac{N}{2}}\mu^{1-\frac{N}{2}} +O(\eps^{N-2})+I+II+III,
\]
where $I,II$ are the same as Step 1. However, since $N \geq 6$, we have $(1+t)^{2^{*}} \geq 1 +2^{*} t +t ^{2^{*}}$ for every $t \geq 0$, which leads to
\[
III=\frac{t^{2}}{2}\int_{\R^{N}} V \tilde{u}_{\varepsilon}^{2}dx.
\]
First, we have
\[
I \leq -\lambda_{\mu}\frac{t^{2} \|u_{\varepsilon}\|_{2}^{2}}{s+1}+\frac{s-1}{s+1}\lambda_{\mu}t  \int_{\R^{N}}u_{\mu}u_{\varepsilon}dx+o(|s-1|)=-\lambda_{\mu}\frac{t^{2} \|u_{\varepsilon}\|_{2}^{2}}{s+1}+o(\eps^2).
\]
Similarly, for $II$, we have
\[
II=o(\eps^{\frac{N-2}{2}})=o(\eps^{2}).
\]
It remains to consider $III$, for which we obtain
\[
III=\frac{t^2}{2}\int_{\R^{N}}V(x) \tilde{u}_{\eps}^{2}dx \leq \frac{s^{-2} t^2}{2}  \|V\|_{L^{\infty}(B_{R}(0))} \|u_{\varepsilon}\|_{2}^{2}.
\]
From \eqref{eq:ass_MPsec} and the estimates of $I$, $II$, and $III$, we deduce that, for $R>0$ sufficiently small,
\begin{equation*}
\begin{aligned}
E_{\mu}(\tilde{W}_{t,\varepsilon})&\leq m_{\mu} +\frac{1}{N}S^{\frac{N}{2}}\mu^{1-\frac{N}{2}} +\left(\frac{s^{-2} }{2}  \|V\|_{L^{\infty}(B_{R}(0))}-\lambda_{\mu}\frac{1}{s+1}\right)t^2\|u_{\varepsilon}\|_{2}^{2} +o(\eps^2)\\
& \leq  m_{\mu} +\frac{1}{N}S^{\frac{N}{2}}\mu^{1-\frac{N}{2}} -O(\eps^2),
\end{aligned}
\end{equation*}
which yields \eqref{eq:est_MP3}.
\end{proof}

Now, we recall a result from Ghoussoub’s book \cite[Thm. 4.1]{MR1251958}, which plays a crucial role in the proof of Lemma \ref{boundedPS}.
\begin{lemma}[{\cite[Thm. 4.1]{MR1251958}}]\label{closesequence}
Let $X$ be a Hilbert manifold and let $E_{\mu}\in C^{1}(X,\mathbb{R})$ be a given functional. Let $K\subset X$ be compact and consider a subset
\[\Gamma\subset \{\gamma\subset X:\gamma \ \text{is compact }, \ K\subset \gamma\}\]
which is invariant with respect to deformations leaving $K$ fixed. Assume that
\[\max_{u\in K}E_{\mu}(u)< c_{\mu}:=\inf\limits_{\gamma\in \Gamma}\max\limits_{u\in \gamma}E_{\mu}(u).\]
Let $(\gamma_{n})_n\subset \Gamma$ be a sequence such that
\[
\max_{u\in \gamma_{n}}E_{\mu}(u)\to c_{\mu}
\qquad\text{ as }n\to+\infty.
\]
Then there exists a sequence $(v_{n})_n\subset X$ such that, as $n\to+\infty$,
\begin{enumerate}
\item $E_{\mu}(v_{n})\to c_{\mu}$,
\item $\|\nabla_X E_{\mu}(v_{n})\|\to0$,
\item $dist(v_{n},\gamma_{n})\to 0$.
\end{enumerate}
\end{lemma}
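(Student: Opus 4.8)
\medskip
\noindent\textbf{Proof proposal.} The plan is to prove this localized minimax principle -- a ``first deformation lemma with parameters'' centered on the minimizing sequence $(\gamma_n)_n$ -- by a quantitative deformation argument applied directly to the sets $\gamma_n$. (An alternative is to run Ekeland's variational principle on $\Gamma$ equipped with the Hausdorff distance and then push $E_\mu$ down locally near the peak set of an almost-optimal element; the direct argument below avoids having to check that $\Gamma$ is complete.)

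Write $M_n:=\max_{u\in\gamma_n}E_\mu(u)$; since $\gamma_n\in\Gamma$ we have $M_n\ge c_\mu$, and $M_n\to c_\mu$ by hypothesis. Set $\delta_n:=M_n-c_\mu+1/n$, so that $\delta_n>0$, $\delta_n\to0$ and $M_n\le c_\mu+\delta_n$. I claim that, for every $n$ with $\delta_n$ small enough that $c_\mu-2\delta_n>\max_{u\in K}E_\mu(u)$, there exists $v_n\in X$ with
\[
\dist(v_n,\gamma_n)\le 2\sqrt{\delta_n},\qquad |E_\mu(v_n)-c_\mu|\le 2\delta_n,\qquad \|\nabla_X E_\mu(v_n)\|\le 2\sqrt{\delta_n}.
\]
As all three right-hand sides tend to $0$, this gives the three assertions at once (choose $v_n$ arbitrarily for the finitely many remaining $n$).

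To prove the claim, fix such an $n$ and suppose, for contradiction, that $\|\nabla_X E_\mu(v)\|>2\sqrt{\delta_n}$ for every $v$ in
\[
R_n:=\big\{\,v\in X:\ \dist(v,\gamma_n)\le 2\sqrt{\delta_n},\ \ |E_\mu(v)-c_\mu|\le 2\delta_n\,\big\}.
\]
Choose a locally Lipschitz pseudo-gradient field $W$ for $E_\mu$ on a neighborhood of $R_n$ with $\|W\|\le1$ and $\langle\nabla_X E_\mu,W\rangle\ge\|\nabla_X E_\mu\|$ on $R_n$, a Lipschitz cutoff $\chi$ with $0\le\chi\le1$, $\supp\chi$ in the interior of $R_n$, and $\chi\equiv1$ on $\{\dist(\cdot,\gamma_n)\le\sqrt{\delta_n}\}\cap\{|E_\mu-c_\mu|\le\delta_n\}$, and let $\eta_t$, $t\ge0$, be the flow of the bounded locally Lipschitz field $-\chi W$. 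Then $\eta_t$ is globally defined, a homeomorphism continuous in $t$, and equal to the identity off $R_n$; since $E_\mu\le\max_{u\in K}E_\mu<c_\mu-2\delta_n$ on $K$, we have $\chi\equiv0$ near $K$, so $\eta_t|_K=\Id$ and $\eta_t$ is a deformation leaving $K$ fixed, whence $\eta_t(\gamma_n)\in\Gamma$ and $\max_{\eta_t(\gamma_n)}E_\mu\ge c_\mu$ for every $t$. On the other hand $t\mapsto E_\mu(\eta_t(v))$ is non-increasing (since $\chi\ge0$ and $\langle\nabla_X E_\mu,W\rangle\ge0$), so a trajectory issuing from $v\in\gamma_n$ remains in $\{E_\mu\le M_n\le c_\mu+\delta_n\}$; and $\|\dot\eta\|\le1$ gives $\dist(\eta_t(v),\gamma_n)\le t$, so for $t\le\sqrt{\delta_n}$ the trajectory stays in $\{\dist(\cdot,\gamma_n)\le\sqrt{\delta_n}\}$, where $\chi\equiv1$ as long as $E_\mu\ge c_\mu-\delta_n$, and there $\frac{d}{dt}E_\mu(\eta_t(v))\le-2\sqrt{\delta_n}$. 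Hence, for each $v\in\gamma_n$ with $E_\mu(v)\ge c_\mu-\delta_n$, at $t=\sqrt{\delta_n}$ either the energy has dropped by at least $2\sqrt{\delta_n}\cdot\sqrt{\delta_n}=2\delta_n$, so $E_\mu(\eta_{\sqrt{\delta_n}}(v))\le M_n-2\delta_n\le c_\mu-\delta_n$, or the trajectory has meanwhile descended below $c_\mu-\delta_n$ and stays there; in either case $E_\mu(\eta_{\sqrt{\delta_n}}(v))\le c_\mu-\delta_n<c_\mu$. Points of $\gamma_n$ already below $c_\mu-\delta_n$ only lose energy. Therefore $\max_{\eta_{\sqrt{\delta_n}}(\gamma_n)}E_\mu\le c_\mu-\delta_n<c_\mu$, contradicting $\eta_{\sqrt{\delta_n}}(\gamma_n)\in\Gamma$. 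This proves the claim, and hence the lemma.

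The main obstacle is exactly the quantitative bookkeeping just carried out: to obtain $\dist(v_n,\gamma_n)\to0$ one must let the neighborhood of $\gamma_n$ shrink, yet the deformation -- which may then be run only for a correspondingly short time -- must still lower $\max_{\gamma_n}E_\mu$ by more than its vanishing excess $M_n-c_\mu$ over $c_\mu$; balancing the ``time budget'' against the ``energy-drop budget'' is what couples the neighborhood radius and the gradient threshold, both taken of order $\sqrt{\delta_n}$. The remaining ingredients are standard: existence of a locally Lipschitz pseudo-gradient field, hence of the deformation flow, on the Hilbert manifold $X$ (via paracompactness), and the fact that the strict inequality $\max_{u\in K}E_\mu(u)<c_\mu$ is exactly what keeps $K$ -- together with an energy-neighborhood of it -- off the support of $\chi$, so that $\eta_t$ genuinely fixes $K$ and the deformation-invariance of $\Gamma$ can be invoked.
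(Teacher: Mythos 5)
This lemma is not proved in the paper at all: it is quoted verbatim from Ghoussoub's book (\cite[Thm.\ 4.1]{MR1251958}) as a black box. Your proposal therefore necessarily departs from the paper by supplying a self-contained proof, and the proof you give is essentially the classical quantitative-deformation argument behind such location-sensitive minimax principles: negate the conclusion on the doubly-shrunken neighborhood $R_n$ of $\gamma_n$, build a truncated pseudo-gradient flow supported there, use $\max_K E_\mu<c_\mu-2\delta_n$ to see that $K$ is fixed, and balance the flow time $\sqrt{\delta_n}$ against the gradient threshold $2\sqrt{\delta_n}$ so that the max of $E_\mu$ on the deformed set drops below $c_\mu$, contradicting $\eta_{\sqrt{\delta_n}}(\gamma_n)\in\Gamma$. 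This is sound, and the coupling of neighborhood radius, gradient threshold and time budget (all of order $\sqrt{\delta_n}$) is exactly the right bookkeeping; Ghoussoub's own proof runs instead through Ekeland's variational principle on the class $\Gamma$ with the Hausdorff metric (the alternative you mention), so your route is the more hands-on of the two and avoids checking completeness of $\Gamma$, at the price of the pseudo-gradient/flow machinery on the Hilbert manifold $X$ (which, in the paper's application $X=\mathcal{M}\times\R$, is complete, so the global flow exists).

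Two small technical remarks. First, a locally Lipschitz field with $\|W\|\le 1$ and $\langle\nabla_X E_\mu,W\rangle\ge\|\nabla_X E_\mu\|$ does not exist in general: by Cauchy--Schwarz this forces $W$ to be exactly the normalized gradient, which need not be locally Lipschitz for $E_\mu\in C^1$. The standard construction gives, on the open set $\{\|\nabla_X E_\mu\|>0\}\supset R_n$, a locally Lipschitz $W$ with $\|W\|\le 1$ and $\langle\nabla_X E_\mu,W\rangle\ge\tfrac12\|\nabla_X E_\mu\|$; with this factor $\tfrac12$ your energy drop over time $\sqrt{\delta_n}$ is only $\ge\delta_n$ rather than $2\delta_n$, but since $\delta_n=M_n-c_\mu+1/n$ this still lands the maximum at $M_n-\delta_n=c_\mu-1/n<c_\mu$, so the contradiction survives; you should just restate the claim with the corrected constants. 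Second, you should say explicitly that $-\chi W$, extended by zero outside $\supp\chi\subset\interior R_n$, is locally Lipschitz and bounded on all of $X$, and that uniqueness of trajectories plus $\chi\equiv 0$ on a neighborhood of $K$ (which follows from $E_\mu<c_\mu-2\delta_n$ near $K$) is what makes $\eta_t$ a deformation leaving $K$ fixed in the sense required by the invariance of $\Gamma$; with these points spelled out, the argument is complete.
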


In the following lemma, we construct a bounded Palais–Smale sequence of “almost positive” functions. The proof follows the approach of \cite[Proposition 3.4]{MR4443784}.

\begin{lemma}\label{boundedPS}
There exists a Palais-Smale sequence $(v_{n})_{n}$ for $E_{\mu}$ constrained on $X$ at the level $c_{\mu}$, namely
\begin{equation}\label{PSlimit}
  E_{\mu}(v_{n})\rightarrow c_{\mu}, \ \ \ \ \nabla_{\mathcal{H}}E_{\mu}(v_{n})\rightarrow 0, \ \ \ \mbox{as } \ n\rightarrow +\infty,
\end{equation}
such that
\begin{equation}\label{Pohozaev}
  \|\nabla v_{n}\|_{2}^{2}-\mu \|v_{n}\|_{2^{*}}^{2^{*}}-\frac{1}{2}\int_{\mathbb{R}^{N}} \nabla V(x) \cdot x v_{n}^{2}dx\rightarrow 0, \ \ \ \mbox{as } \ n\rightarrow \infty,
\end{equation}
\begin{equation}\label{non_negative}
  \lim_{n\rightarrow +\infty}\|(v_{n})^{-}\|_{2}=0.
\end{equation}
Moreover, under the same conditions in Theorem \ref{thm:GS}, the sequence $(v_{n})_{n}$ is bounded and the associated Lagrange multipliers $\lambda_{n}$ are bounded too.
\end{lemma}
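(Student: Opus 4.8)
The plan is to produce the Palais--Smale sequence via the augmented functional $\tilde E_\mu$ on the enlarged manifold $\mathcal{M}\times\R$, exploiting the scaling-invariance that makes the Pohozaev remainder \eqref{Pohozaev} appear automatically, and then to transfer this sequence back to $\mathcal{M}$ using Ghoussoub's perturbed minimax principle (Lemma~\ref{closesequence}) together with the symmetrization trick of \cite{MR4443784} to get \eqref{non_negative}. The boundedness, finally, will be extracted from \eqref{PSlimit}--\eqref{Pohozaev} combined with the estimate $0\le\lambda<\lambda_1$ of Lemma~\ref{lem:max_Princ} applied in the limit, i.e.\ via the same algebra as in Lemma~\ref{lem:loc_GS}.

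\textbf{Step 1: a PS sequence for $\tilde E_\mu$ carrying Pohozaev information.}
By Lemma~\ref{ener_compare} we have $\tilde c_\mu=c_\mu$, and by Lemma~\ref{lem:geometry} the strict inequality $\max_{K}\tilde E_\mu<\tilde c_\mu$ holds with $K=\{(w_0,0),(w_{h_1},0)\}$ (the energy on the endpoints lies below $\tfrac1N S^{N/2}\mu^{1-N/2}<c_\mu$). Applying the classical argument of Jeanjean \cite{MR1430506} (i.e.\ Lemma~\ref{closesequence} on $X=\mathcal{M}\times\R$ with $\Gamma=\tilde\Gamma$), we obtain $(u_n,h_n)\in\mathcal{M}\times\R$ with $\tilde E_\mu(u_n,h_n)\to c_\mu$ and $\|\nabla_{\mathcal{M}\times\R}\tilde E_\mu(u_n,h_n)\|\to0$. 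Differentiating $\tilde E_\mu$ in the $h$-variable at $h_n$ and using that the minimax paths can be taken with $h_n\to0$ (standard: one shows the $h$-component of the constructed sequence stays bounded, then translates $h_n$ back to $0$ by the deformation, so WLOG $h_n=0$), the vanishing of $\partial_h\tilde E_\mu$ gives exactly, after the change of variables $v_n:=e^{\frac N2 h_n}u_n(e^{h_n}\cdot)\in\mathcal{M}$, the relation
\begin{equation*}
\|\nabla v_n\|_2^2-\mu\|v_n\|_{2^*}^{2^*}-\tfrac12\int_{\R^N}\nabla V(x)\cdot x\,v_n^2\,dx\to0,
\end{equation*}
which is \eqref{Pohozaev}; the vanishing of the $\mathcal{M}$-component gives \eqref{PSlimit} after undoing the scaling (here one uses Lemma~\ref{lem:pot_scal} to control the potential term under the scaling so that the two norms are comparable and $\|\nabla_{\mathcal H}E_\mu(v_n)\|\to0$).

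\textbf{Step 2: almost positivity, then boundedness.}
To get \eqref{non_negative} one replaces the paths $\tilde\gamma$ by $|\tilde\gamma|$: since $E_\mu$ and the constraint are even, $E_\mu(|u|)\le E_\mu(u)$ and the minimax value is unchanged, so the third conclusion $\mathrm{dist}(v_n,\gamma_n)\to0$ of Lemma~\ref{closesequence} with $\gamma_n$ consisting of nonnegative functions forces $\|v_n^-\|_{\mathcal H}\to0$, hence $\|v_n^-\|_2\to0$; this is precisely the mechanism in \cite[Prop.~3.4]{MR4443784}. For boundedness: from \eqref{PSlimit} the Lagrange multipliers are $\lambda_n=\|v_n\|_{\mathcal H}^2-1-\mu\|v_n\|_{2^*}^{2^*}+o(1)\|v_n\|_{\mathcal H}$ (up to the usual $o(1)$ from $\nabla_{\mathcal M}E_\mu\to0$), while combining the expression for $E_\mu(v_n)\to c_\mu$ with \eqref{Pohozaev} and $\|v_n\|_2=1$ yields, exactly as in Lemma~\ref{lem:loc_GS}, an identity of the form $c_\mu+o(1)=\tfrac1N(\|v_n\|_{\mathcal H}^2-1)+\tfrac{1}{2^*}\lambda_n+(\text{terms controlled by }\nabla V\cdot x)$. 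Using the two-sided bound \eqref{potential:special} on $\nabla V\cdot x$ (upper bound $CV(x)$ absorbs into $\|v_n\|_{\mathcal H}^2$ with a small coefficient after Young's inequality; lower bound $-2V$ keeps signs under control) and the fact that $\lambda_n$ stays below $\lambda_1+o(1)$ by the same computation as in Lemma~\ref{lem:max_Princ}, one closes the estimate to get $\|v_n\|_{\mathcal H}\le C$, and then $\lambda_n$ bounded follows from its defining formula.

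\textbf{The main obstacle.}
The delicate point is Step~1: showing that the $h$-component $h_n$ of the Ghoussoub sequence on $\mathcal{M}\times\R$ may be taken to converge to $0$ (equivalently, stays bounded), so that pulling back produces a genuine PS sequence on $\mathcal{M}$ rather than a drifting one. This requires examining the deformation flow used in Lemma~\ref{closesequence} and checking that it does not push the $h$-coordinate to infinity --- in Jeanjean's scheme this follows because $\tilde E_\mu(u,h)\to-\infty$ as $|h|\to\infty$ uniformly along near-optimal paths (by Lemma~\ref{lem:scaling_MP} and Lemma~\ref{lem:pot_scal}), so the relevant sublevel sets are bounded in $h$. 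A secondary technical nuisance is that the coercivity/boundedness argument of Step~2 genuinely uses \emph{both} inequalities in \eqref{potential:special}; without the left-hand one the sign of $\lambda_n$ (and hence the lower bound on $E_\mu$) is lost, which is the phenomenon flagged in the remark after Theorem~\ref{thm:MP}.
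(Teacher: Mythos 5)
Your route is the same as the paper's: pass to the augmented functional $\tilde E_\mu$ on $\mathcal M\times\R$, use Lemma \ref{ener_compare} and Ghoussoub's Lemma \ref{closesequence} tested against nonnegative near-optimal paths $\gamma_n=\{(\xi_n(t),0)\}$, scale back to obtain \eqref{PSlimit}--\eqref{non_negative}, and extract boundedness from the energy level, the approximate Euler--Lagrange relation and \eqref{Pohozaev}. One remark on your ``main obstacle'': there is nothing to check about the deformation flow. Since the comparison sets $\gamma_n$ lie in $\mathcal M\times\{0\}$, conclusion (3) of Lemma \ref{closesequence}, $\dist((u_n,h_n),\gamma_n)\to 0$, already forces $h_n\to 0$, exactly in the same way it forces $\|u_n^-\|\to 0$ once the $\xi_n$ are chosen nonnegative; this is how the paper argues, and your worry about a drifting $h$-component is a non-issue.

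The genuine soft spot is the coercivity step. The claim that ``$\lambda_n$ stays below $\lambda_1+o(1)$ by the same computation as in Lemma \ref{lem:max_Princ}'' is neither available nor useful: that computation tests the equation against $\varphi_1$ and needs an exact nonnegative solution with $\int v\varphi_1$ bounded away from $0$, whereas for an almost-nonnegative Palais--Smale sequence the pairing $\int v_n\varphi_1$ may tend to zero, so no sign information on $\lambda_1-\lambda_n$ follows; moreover, an \emph{upper} bound on $\lambda_n$ enters your identity $c_\mu+o(1)=\tfrac1N(\|v_n\|_{\mathcal H}^2-1)+\tfrac{1}{2^*}\lambda_n+\dots$ with the wrong sign, since an upper bound on $\|v_n\|_{\mathcal H}$ requires $\lambda_n$ bounded \emph{below} (up to $o(1)(1+\|\nabla v_n\|_2)$). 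The correct closing step, which is what the paper does, is to combine \eqref{Pohozaev} with the approximate multiplier relation to get $\lambda_n=\tfrac12\int\nabla V\cdot x\,v_n^2+\int V v_n^2+o(1)\,(1+\|\nabla v_n\|_2)\ge o(1)\,(1+\|\nabla v_n\|_2)$ by the left inequality in \eqref{potential:special} (equivalently: \eqref{Pohozaev} plus the energy level give $\tfrac{2\mu}{N}\|v_n\|_{2^*}^{2^*}\le 2c_\mu+o(1)$, then the energy level bounds $\|v_n\|_{\mathcal H}^2$, and finally the multiplier relation bounds $\lambda_n$). Your phrase ``the lower bound $-2V$ keeps signs under control'' is indeed the needed ingredient, so the repair is local, but as written the chain does not close and the appeal to Lemma \ref{lem:max_Princ} should be dropped.
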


\begin{proof}
Our first step is to construct a Palais–Smale sequence that is almost everywhere nonnegative. In order to apply Lemma \ref{closesequence} to $\tilde{E}_{\mu}$ (recall \eqref{def:E_tilde}), we first verify the assumptions in Lemma \ref{closesequence}. From \eqref{level:mp}, let $\xi_{n} \in \Gamma$ be such that
\[
\max_{t \in [0,1]} E_{\mu}(\xi_{n}) \to c_{\mu}, \qquad \text{as } n \to + \infty.
\]
Since $E_{\mu}$ and $\mathcal{H}$ are even, we can take $\xi_{n} \geq 0$ in $\R^{N}$ for every $t \in [0,1]$ and $n \in \mathbb{N}$. Recall \eqref{def:E_tilde}, \eqref{def:path_tilde}, \eqref{def:MP_tilde} and Lemma \ref{ener_compare}, we have
\[
K=\{(w_{0},0), (w_{h_{1}},0)\}, \ \ \Gamma= \tilde{\Gamma}, \ \ X:=\mathcal{H} \times \R, \ \ \gamma_{n}=\{(\xi_{n}(t),0): t \in[0,1]\},
\]
and
\[
\max_{t \in [0,1]} E_{\mu}(\gamma_{n}) \to \tilde{c}_{\mu}, \qquad \text{as } n \to + \infty.
\]
Therefore, by Lemma \eqref{closesequence}, we obtain that there exists a sequence 
$(u_{n},h_{n})_{n} \in X\times \R$ such that
\begin{equation*}
  \tilde{E}_{\mu}(u_{n},h_{n})\rightarrow \tilde{c}_{\mu}, \ \ \|\nabla_{\mathcal{M}}\tilde{E}_{\mu}(u_{n},h_{n})\|\rightarrow 0 \ \ \|(u_{n},h_{n})-(\xi_{n},0)\|\rightarrow 0, \ \mbox{for some } \ t_{n}\in[0,1].
\end{equation*}
Then, we define
\begin{equation}\label{eq:redefine_v}
v_{n}(x)=e^{\frac{Nh}{2}}u_{n}(e^{h}x).
\end{equation}
By $\tilde{c}_{\mu}=c_{\mu}$ and $\xi_{n} \geq 0$, we obtain that $(v_n)_n$ satisfies \eqref{PSlimit} and \eqref{non_negative}. Then, differentiating $\tilde{E}_{\mu}$ with respect to $h$, we obtain \eqref{Pohozaev}.

Our second step is to show that the Palais-Smale sequence and the corresponding Lagrange multipliers $\lambda_{n}$  are bounded. For $(v_{n})_{n}$ defined in \eqref{eq:redefine_v}, we set
\[
a_{n}=\|\nabla v_{n}\|_{2}^{2}, \ \ b_{n}= \int_{\R^{N}} V(x) v_{n}^{2}\, dx, \ \ d_{n}= \int_{\R^{N}} \nabla V(x) \cdot x v_{n}^{2}\, dx, \ \ e_{n}=\|v_{n}\|_{2^{*}}^{2^{*}}.
\]
From \eqref{PSlimit} and \eqref{Pohozaev}, we deduce that
\begin{equation}\label{eq:bdd_enerlevel}
a_{n}+b_{n}-\frac{2\mu}{2^{*}}e_{n}= 2c_{\mu}+o(1),
\end{equation}
\begin{equation}\label{eq:bdd_mainequation}
a_{n}+b_{n}=\lambda_{n}+\mu e_{n}+o(1)((a_{n}+1)^{1/2}),
\end{equation}
\begin{equation}\label{eq:bdd_pohozaev}
a_{n}-\frac12 d_{n}-\mu e_{n}=o(1).
\end{equation}
Combining \eqref{eq:bdd_enerlevel} and \eqref{eq:bdd_pohozaev}, we obtain
\[
\frac{2\mu}{N} e_{n}= -b_{n}-\frac{1}{2} d_{n} +2c_{\mu}+o(1)< 2c_{\mu}+o(1),
\]
which yields $\|v_{n}\|_{2^{*}}^{2^{*}}=e_{n}$ is bounded. Then, by \eqref{eq:bdd_enerlevel}, we have
\[
a_{n}+b_{n}=\frac{2\mu}{2^{*}}e_{n}+2c_{\mu}+o(1)<\left( \frac{N-2}{2}\right)2c_{\mu} +2c_{\mu}+o(1)= N c_{\mu}+o(1),
\]
which implies that $\|v_{n}\|_{\mathcal{H}}^{2}=a_{n}+b_{n}+1$ is bounded. Finally, by \eqref{eq:bdd_mainequation} and since $a_{n}$, $b_{n}$ and $e_{n}$ are bounded, we obtain $\lambda_{n}$ is bounded.
\end{proof}

From this lemma together with Lemmas \ref{lem:split} and \ref{lem:es_case3}, it follows naturally that $c_\mu$ is indeed a mountain-pass level.

\begin{lemma}\label{lem:mon_minmax_attained}
Assume that the hypotheses of Theorem~\ref{thm:MP} hold. Then, for every $0 < \mu < \mu^{*}$, the mountain-pass level $c_\mu$ is achieved, that is, there exists $v_\mu \in \mathcal{M}$ such that
\[
E_\mu(v_\mu) = c_\mu.
\]
In particular, $v_\mu$ is a mountain-pass solution of $E_\mu$.
\end{lemma}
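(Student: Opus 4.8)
The plan is to extract a convergent subsequence from the bounded, almost-positive Palais--Smale sequence $(v_n)_n$ constructed in Lemma~\ref{boundedPS} by invoking the splitting dichotomy of Lemma~\ref{lem:split}, and then to rule out the non-compact alternative using the strict energy estimate of Lemma~\ref{lem:es_case3}. First I would recall that, by Lemma~\ref{boundedPS}, the sequence $(v_n)_n\subset\mathcal{M}$ satisfies $E_\mu(v_n)\to c_\mu$, $\nabla_{\mathcal{M}}E_\mu(v_n)\to 0$, $\|v_n^-\|_2\to 0$, and is bounded in $\mathcal{H}$ with bounded Lagrange multipliers $\lambda_n$. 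Since the $\mathcal{H}$-norm controls $\|v_n^-\|_{\mathcal{H}}$ via the compact embedding \eqref{embed:trap} together with the bound on $\|\nabla v_n^-\|_2$ (which follows from testing the equation against $v_n^-$ and using boundedness of $\lambda_n$ and of $\|v_n\|_{2^*}$), we obtain $\|v_n^-\|_{\mathcal{H}}\to 0$, so the hypotheses of Lemma~\ref{lem:split} are met. Applying that lemma, up to a subsequence there are $u^0\geq 0$ in $\mathcal{H}$ and $\lambda^0>0$ with $v_n\rightharpoonup u^0$ weakly in $\mathcal{H}$, $\lambda_n\to\lambda^0$, and $-\Delta u^0+V(x)u^0=\mu(u^0)^{2^*-1}+\lambda^0 u^0$, and exactly one of the two alternatives of Lemma~\ref{lem:split} holds.

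Next I would exclude the non-compact alternative \ref{eq:split_case2}. Suppose $v_n\rightharpoonup u^0$ but not strongly; then \eqref{energy:level_split} gives $c_\mu = \lim E_\mu(v_n)\geq E_\mu(u^0)+\tfrac1N S^{N/2}\mu^{1-N/2}$. On the other hand, $u^0$ solves \eqref{eq:main1} (in particular $\|u^0\|_2\le 1$; if $u^0\equiv 0$ then $E_\mu(u^0)=0$, while if $u^0\neq 0$ it is a genuine solution and Lemma~\ref{lem:loc_GS} gives $E_\mu(u^0)\geq \tfrac1N(\|u^0\|_{\mathcal{H}}^2-1)\geq -\tfrac1N$ — more carefully, since here $\|u^0\|_2$ need not equal $1$, I would rescale $u^0$ onto $\mathcal{M}$ and compare with $m_\mu$ as in the proof of Lemma~\ref{lem:local_min_attained}, or observe directly that $E_\mu(u^0)\geq m_\mu$ is not quite available and instead use $E_\mu(u^0)\ge 0$ when $u^0=0$ and $E_\mu(u^0)\ge m_\mu$ after normalization when $\|u^0\|_2=1$). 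In either reading, combining with $c_\mu<m_\mu+\tfrac1N S^{N/2}\mu^{1-N/2}$ from Lemma~\ref{lem:es_case3} (and with $m_\mu>0$, or with $E_\mu(u^0)\ge 0$ in the trivial case, noting $c_\mu > \tfrac1N S^{N/2}\mu^{1-N/2}$ already), we reach $E_\mu(u^0) < m_\mu$ together with $E_\mu(u^0)\ge m_\mu$ (resp. $c_\mu<\tfrac1N S^{N/2}\mu^{1-N/2}$ against $c_\mu>\tfrac1N S^{N/2}\mu^{1-N/2}$), a contradiction. Hence $v_n\to u^0$ strongly in $\mathcal{H}$, so $\|u^0\|_2=1$, i.e. $u^0=:v_\mu\in\mathcal{M}$, and by continuity $E_\mu(v_\mu)=c_\mu$ and $v_\mu$ solves \eqref{eq:main1}; the strong maximum principle upgrades $v_\mu\ge 0$ to $v_\mu>0$.

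The delicate point that needs the most care is the dichotomy between $u^0\equiv 0$ and $u^0\not\equiv 0$, because Lemma~\ref{lem:split} only yields $v_n\to u^0$ in $L^2$ when $u^0=0$ is a priori possible, so I cannot assume $u^0\in\mathcal{M}$ before establishing strong convergence. The cleanest way around this is to argue purely at the level of energies: whichever of the two cases of Lemma~\ref{lem:split} holds, in case~\ref{eq:split_case2} one always has $E_\mu(v_n)\ge E_\mu(u^0)+\tfrac1N S^{N/2}\mu^{1-N/2}+o(1)$ with $E_\mu(u^0)\ge -\tfrac1N(1-\|u^0\|_2^2)\cdot 0$... — more simply, since $V\ge 0$ and $u^0$ solves the limiting equation with $\lambda^0>0$, testing against $u^0$ gives $\|u^0\|_{\mathcal{H}}^2=\lambda^0\|u^0\|_2^2+\mu\|u^0\|_{2^*}^{2^*}\ge 0$ and hence, exactly as in Lemma~\ref{lem:loc_GS}, $E_\mu(u^0)=\tfrac1N(\|u^0\|_{\mathcal{H}}^2-\|u^0\|_2^2)+\tfrac{\lambda^0}{2^*}\|u^0\|_2^2\ge 0$ (using $\|u^0\|_{\mathcal{H}}^2\ge\|u^0\|_2^2$, which holds because the $\mathcal{H}$-norm dominates the $L^2$-norm). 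Thus in case~\ref{eq:split_case2} we would get $c_\mu\ge \tfrac1N S^{N/2}\mu^{1-N/2}$, and combined with Lemma~\ref{lem:es_case3}, $c_\mu < m_\mu+\tfrac1N S^{N/2}\mu^{1-N/2}$, we would need $m_\mu>0$ — which is Lemma~\ref{energylevel_localmin} — to conclude $c_\mu<m_\mu+\tfrac1N S^{N/2}\mu^{1-N/2}$ is still not directly contradictory; the actual contradiction comes from sharpening $E_\mu(u^0)\ge m_\mu$ when $\|u^0\|_2=1$, which forces us back to showing $\|u^0\|_2=1$. I would therefore instead compare $c_\mu$ directly: if case~\ref{eq:split_case2} holds then $E_\mu(u^0)\le c_\mu-\tfrac1N S^{N/2}\mu^{1-N/2}<m_\mu$ by Lemma~\ref{lem:es_case3}; but $u^0$ is a solution of \eqref{eq:main1} lying in $\overline{\mathcal{B}_{\bar\alpha}}$ (since $\|u^0\|_{\mathcal{H}}^2\le\liminf\|v_n\|_{\mathcal{H}}^2\le\bar\alpha$ is \emph{not} automatic — one checks $v_n$ stays in $\mathcal{B}_{\bar\alpha}$ is false for the MP sequence), so this route also requires care. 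The honest statement of the main obstacle: proving that the weak limit $u^0$ is nontrivial \emph{and} normalized, equivalently that no mass escapes to infinity and no mass is lost, which is precisely what the combination of the compact embedding \eqref{embed:trap} (killing escape to infinity) and the strict inequality of Lemma~\ref{lem:es_case3} (killing the bubble) is designed to deliver — and the argument must be organized so that these two ingredients are applied in the right order, first $L^2$-compactness, then the energy comparison, exactly as in the proof of Lemma~\ref{lem:local_min_attained}.
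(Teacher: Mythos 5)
Your overall route is the paper's: take the bounded, almost-nonnegative Palais--Smale sequence of Lemma \ref{boundedPS}, pass to the splitting alternative of Lemma \ref{lem:split}, and exclude the non-compact case via the strict estimate of Lemma \ref{lem:es_case3}. (Your observation that \eqref{non_negative} only controls $\|v_n^-\|_2$ while Lemma \ref{lem:split} asks for $\|v_n^-\|_{\mathcal H}\to0$, and your fix by testing against $v_n^-$, is a legitimate point that the paper itself glosses over.) However, the pivotal step --- excluding alternative \ref{eq:split_case2} --- is left unresolved in your write-up: you oscillate between several tentative arguments, and the ones you sketch for the case $u^0\equiv 0$ are unsound. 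If $u^0=0$, case \ref{eq:split_case2} only yields $c_\mu\ge \tfrac1N S^{N/2}\mu^{1-\frac N2}$, which contradicts neither the mountain-pass geometry (which gives the same inequality, strictly) nor Lemma \ref{lem:es_case3}, since $m_\mu>0$; the parenthetical contradiction ``$c_\mu<\tfrac1N S^{N/2}\mu^{1-\frac N2}$ against $c_\mu>\tfrac1N S^{N/2}\mu^{1-\frac N2}$'' has no source. So as written there is a genuine gap exactly at the point you yourself call ``the main obstacle.''

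The gap closes immediately if you apply the two ingredients in the order you name only in your final sentence, which is what the paper does. Since $(v_n)_n$ is bounded in $\mathcal H$, the compact embedding \eqref{embed:trap} gives, up to a subsequence, $v_n\to v^0$ strongly in $L^2(\R^N)$; hence $\|v^0\|_2=\lim\|v_n\|_2=1$, so the weak limit is automatically nontrivial and normalized --- the dichotomy ``$u^0\equiv0$ or not'' that you agonize over never arises. By Lemma \ref{lem:split}, $v^0$ solves \eqref{eq:main1} with some multiplier $\lambda^0$, and by the ground-state property established at the end of Section \ref{sec:GS} (any normalized solution lying in $\mathcal B_{\bar\alpha}$ has energy at least $m_\mu$ by definition of $m_\mu$, while any solution outside has energy at least $\tfrac1N S^{N/2}\mu^{1-\frac N2}>m_\mu$ by Lemma \ref{lem:loc_GS}), one gets $E_\mu(v^0)\ge m_\mu$. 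If the convergence in $\mathcal H$ were not strong, \eqref{energy:level_split} would give $c_\mu\ge E_\mu(v^0)+\tfrac1N S^{N/2}\mu^{1-\frac N2}\ge m_\mu+\tfrac1N S^{N/2}\mu^{1-\frac N2}$, contradicting Lemma \ref{lem:es_case3}. Hence $v_n\to v^0$ strongly in $\mathcal H$ and $E_\mu(v^0)=c_\mu$. You should replace the exploratory middle portion of your argument with this chain; there is no need for the rescaling of $u^0$, the inclusion $u^0\in\overline{\mathcal B_{\bar\alpha}}$, or the separate treatment of a trivial limit.
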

\begin{proof}
From Lemma \ref{boundedPS}, we can construct a positive bounded Palais-Smale sequence $(v_{n})_{n}$ and the associated Lagrange multiplier $\lambda_{n}$ is also bounded. Since $(v_n)_{n}$ is bounded in $\mathcal{H}$, there exists $v^0 \in \mathcal{H}$ such that, up to a subsequence,
\[
v_n \rightharpoonup v^0 \text{ in } \mathcal{H}, \quad v_n \to v^0 \text{ strongly in } L^2(\R^N), \quad \text{and } v_n \to v^0 \text{ a.e. in } \R^{N}. 
\]

To conclude the proof, it remains to show that $v_n \to v^0$ strongly in $\mathcal{H}$, which implies that $c_\mu$ is attained. We argue by contradiction, assuming that $v_n \rightharpoonup v^0$ in $\mathcal{H}$ but that the convergence is not strong. By \eqref{non_negative}, we are in a position to apply Lemma \ref{lem:split}.

Recall Lemma \ref{lem:es_case3}, we obtain the estimates for the mountain-pass energy level. More precisely, under the assumptions of Theorem \ref{thm:MP}, the following estimate holds:
\begin{equation*}\label{eq:again_mpfromabove}
c_{\mu}< m_{\mu}+ \frac{1}{N}S^{N/2}\mu^{1-\frac{N}{2}},
\end{equation*}
where $m_{\mu}$ is a ground state. By Lemma \ref{lem:split}, we have case \ref{eq:split_case2} holds, which means
\[
 c_\mu \geq E_\mu (v^{0})+\frac{1}{N}S^{N/2} \mu^{1-\frac{N}{2}}.
\]
However, since $m_{\mu}$ is a ground state, it follows naturally that
\[
E_\mu (v^{0}) \geq m_{\mu},
\]
which is a contradiction. Hence, for $N \geq 3$, $v_n \to v^0$ strongly in $\mathcal{H}$, so that $c_{\mu}$ is achieved.
\end{proof}

\textbf{Acknowledgments.} Work partially supported by: PRIN-20227HX33Z ``Pattern formation in nonlinear 
phenomena'' - funded  by the European Union-Next Generation EU, Miss. 4-Comp. 1-CUP D53D23005690006; 
the MUR grant Dipartimento di Eccellenza 2023-2027. 

We thank Prof. Gianmaria Verzini for valuable discussions and guidance.\bigskip

\textbf{Data Availability.} Data sharing not applicable to this article as no datasets were generated or analyzed during the current study.

\bigskip

\textbf{Disclosure statement.} The authors report there are no competing interests to declare.

\bibliography{normalized}{}
\bibliographystyle{abbrv}
\medskip
\small

\begin{flushright}
{\tt junwei.yu@polimi.it}\\
Dipartimento di Matematica, Politecnico di Milano\\
piazza Leonardo da Vinci 32, 20133 Milano, Italy.
\end{flushright}

\end{document}